\documentclass[letterpaper,final,twoside,leqno]{amsart}

\usepackage[normalem]{ulem}
\usepackage{mathtools}
\usepackage{mathrsfs}  
\usepackage[arrow,curve,matrix]{xy}

\usepackage{scalerel}
\usepackage{comment}
\usepackage{xspace}

\newif\ifdraft
\draftfalse

\numberwithin{figure}{section}

\usepackage{amsmath,amsfonts,amsthm,mathrsfs}
\usepackage{amssymb}
\usepackage{mathrsfs}

\DeclareFontFamily{OMS}{rsfs}{\skewchar\font'60}
\DeclareFontShape{OMS}{rsfs}{m}{n}{<-5>rsfs5 <5-7>rsfs7 <7->rsfs10 }{}
\DeclareSymbolFont{rsfs}{OMS}{rsfs}{m}{n}
\DeclareSymbolFontAlphabet{\scr}{rsfs}

\usepackage[bookmarks]{hyperref}
\usepackage[usenames,dvipsnames]{xcolor}
\hypersetup{
    colorlinks=true,
    linkcolor={blue!50!black},
    citecolor={green!50!black},
    urlcolor={red!80!black}
}

\usepackage[alphabetic,initials]{amsrefs}

\usepackage{enumitem}

\usepackage{chngcntr}

\ifdraft
\usepackage[notcite,notref,color]{showkeys}

\definecolor{labelkey}{gray}{0.5}
\fi

\usepackage{tikz}

\usepackage{tikz-cd}
\tikzset{commutative diagrams/arrow style=math font}

\usetikzlibrary{matrix,arrows}
\newlength{\myarrowsize} 

\pgfarrowsdeclare{cmto}{cmto}{
	\pgfsetdash{}{0pt} 
	\pgfsetbeveljoin 
	\pgfsetroundcap 
	\setlength{\myarrowsize}{0.6pt}
	\addtolength{\myarrowsize}{.5\pgflinewidth}
	\pgfarrowsleftextend{-4\myarrowsize-.5\pgflinewidth} 
	\pgfarrowsrightextend{.8\pgflinewidth}
}{
	\setlength{\myarrowsize}{0.6pt} 
  	\addtolength{\myarrowsize}{.5\pgflinewidth}  
	\pgfsetlinewidth{0.5\pgflinewidth}
	\pgfsetroundjoin
	\pgfpathmoveto{\pgfpoint{1.5\pgflinewidth}{0}}
	\pgfpatharc{-109}{-170}{4\myarrowsize}
	\pgfpatharc{10}{189}{0.58\pgflinewidth and 0.2\pgflinewidth}
	\pgfpatharc{-170}{-115}{4\myarrowsize+\pgflinewidth}
	\pgfpathclose
	\pgfusepathqfillstroke
	\pgfpathmoveto{\pgfpoint{1.5\pgflinewidth}{0}}
	\pgfpatharc{109}{170}{4\myarrowsize}
	\pgfpatharc{-10}{-189}{0.58\pgflinewidth and 0.2\pgflinewidth}
	\pgfpatharc{170}{115}{4\myarrowsize+\pgflinewidth}
	\pgfpathclose
	\pgfusepathqfillstroke
	\pgfsetlinewidth{2\pgflinewidth}
}

\pgfarrowsdeclare{cmonto}{cmonto}{
	\pgfsetdash{}{0pt} 
	\pgfsetbeveljoin 
	\pgfsetroundcap 
	\setlength{\myarrowsize}{0.6pt}
	\addtolength{\myarrowsize}{.5\pgflinewidth}
	\pgfarrowsleftextend{-4\myarrowsize-.5\pgflinewidth} 
	\pgfarrowsrightextend{.8\pgflinewidth}
}{
	\setlength{\myarrowsize}{0.6pt} 
  	\addtolength{\myarrowsize}{.5\pgflinewidth}  
	\pgfsetlinewidth{0.5\pgflinewidth}
	\pgfsetroundjoin
	\pgfpathmoveto{\pgfpoint{1.5\pgflinewidth}{0}}
	\pgfpatharc{-109}{-170}{4\myarrowsize}
	\pgfpatharc{10}{189}{0.58\pgflinewidth and 0.2\pgflinewidth}
	\pgfpatharc{-170}{-115}{4\myarrowsize+\pgflinewidth}
	\pgfpathclose
	\pgfusepathqfillstroke
	\pgfpathmoveto{\pgfpoint{1.5\pgflinewidth}{0}}
	\pgfpatharc{109}{170}{4\myarrowsize}
	\pgfpatharc{-10}{-189}{0.58\pgflinewidth and 0.2\pgflinewidth}
	\pgfpatharc{170}{115}{4\myarrowsize+\pgflinewidth}
	\pgfpathclose
	\pgfusepathqfillstroke
	\pgfpathmoveto{\pgfpoint{1.5\pgflinewidth-0.3em}{0}}
	\pgfpatharc{-109}{-170}{4\myarrowsize}
	\pgfpatharc{10}{189}{0.58\pgflinewidth and 0.2\pgflinewidth}
	\pgfpatharc{-170}{-115}{4\myarrowsize+\pgflinewidth}
	\pgfpathclose
	\pgfusepathqfillstroke
	\pgfpathmoveto{\pgfpoint{1.5\pgflinewidth-0.3em}{0}}
	\pgfpatharc{109}{170}{4\myarrowsize}
	\pgfpatharc{-10}{-189}{0.58\pgflinewidth and 0.2\pgflinewidth}
	\pgfpatharc{170}{115}{4\myarrowsize+\pgflinewidth}
	\pgfpathclose
	\pgfusepathqfillstroke
	\pgfsetlinewidth{2\pgflinewidth}
}

\pgfarrowsdeclare{cmhook}{cmhook}{
	\pgfsetdash{}{0pt} 
	\pgfsetbeveljoin 
	\pgfsetroundcap 
	\setlength{\myarrowsize}{0.6pt}
	\addtolength{\myarrowsize}{.5\pgflinewidth}
	\pgfarrowsleftextend{-4\myarrowsize-.5\pgflinewidth} 
	\pgfarrowsrightextend{.8\pgflinewidth}
}{
	\setlength{\myarrowsize}{0.6pt} 
  	\addtolength{\myarrowsize}{.5\pgflinewidth}  
 	\pgfsetdash{}{0pt}
	\pgfsetroundcap
	\pgfpathmoveto{\pgfqpoint{0pt}{-4.667\pgflinewidth}}
	\pgfpathcurveto
    {\pgfqpoint{4\pgflinewidth}{-4.667\pgflinewidth}}
    {\pgfqpoint{4\pgflinewidth}{0pt}}
    {\pgfpointorigin}
	\pgfusepathqstroke
}

\newenvironment{diagram*}[2]{%
\[%
\begin{tikzpicture}[>=cmto,baseline=(current bounding box.center),%
	to/.style={->,font=\scriptsize,cap=round},%
	into/.style={cmhook->,font=\scriptsize,cap=round},%
	onto/.style={-cmonto,font=\scriptsize,cap=round},%
	math/.style={matrix of math nodes, row sep=#2, column sep=#1,%
		text height=1.5ex, text depth=0.25ex}]%
}{%
\end{tikzpicture}%
\]%
\ignorespacesafterend%
}

\DeclareMathOperator{\disc}{disc}
\DeclareMathOperator{\pr}{pr}

\newcommand{\theoremref}[1]{\hyperref[#1]{Theorem~\ref*{#1}}}
\newcommand{\lemmaref}[1]{\hyperref[#1]{Lemma~\ref*{#1}}}
\newcommand{\definitionref}[1]{\hyperref[#1]{Definition~\ref*{#1}}}
\newcommand{\propositionref}[1]{\hyperref[#1]{Proposition~\ref*{#1}}}
\newcommand{\conjectureref}[1]{\hyperref[#1]{Conjecture~\ref*{#1}}}
\newcommand{\corollaryref}[1]{\hyperref[#1]{Corollary~\ref*{#1}}}
\newcommand{\exampleref}[1]{\hyperref[#1]{Example~\ref*{#1}}}

\makeatletter
\let\old@caption\caption
\renewcommand*{\caption}[1]{%
  \setcounter{figure}{\value{equation}}%
  \stepcounter{equation}%
  \old@caption{#1}\relax%
}
\makeatother

\newcounter{intro}

\newtheorem{intro-conjecture}[intro]{Conjecture}
\newtheorem{intro-corollary}[intro]{Corollary}
\newtheorem{intro-theorem}[intro]{Theorem}

\newcommand{\parref}[1]{\hyperref[#1]{\S\ref*{#1}}}

\makeatletter
\newcommand*\if@single[3]{%
  \setbox0\hbox{${\mathaccent"0362{#1}}^H$}%
  \setbox2\hbox{${\mathaccent"0362{\kern0pt#1}}^H$}%
  \ifdim\ht0=\ht2 #3\else #2\fi
  }
\newcommand*\rel@kern[1]{\kern#1\dimexpr\macc@kerna}
\newcommand*\widebar[1]{\@ifnextchar^{{\wide@bar{#1}{0}}}{\wide@bar{#1}{1}}}
\newcommand*\wide@bar[2]{\if@single{#1}{\wide@bar@{#1}{#2}{1}}{\wide@bar@{#1}{#2}{2}}}
\newcommand*\wide@bar@[3]{%
  \begingroup
  \def\mathaccent##1##2{%
    \if#32 \let\macc@nucleus\first@char \fi
    \setbox\z@\hbox{$\macc@style{\macc@nucleus}_{}$}%
    \setbox\tw@\hbox{$\macc@style{\macc@nucleus}{}_{}$}%
    \dimen@\wd\tw@
    \advance\dimen@-\wd\z@
    \divide\dimen@ 3
    \@tempdima\wd\tw@
    \advance\@tempdima-\scriptspace
    \divide\@tempdima 10
    \advance\dimen@-\@tempdima
    \ifdim\dimen@>\z@ \dimen@0pt\fi
    \rel@kern{0.6}\kern-\dimen@
    \if#31
      \overline{\rel@kern{-0.6}\kern\dimen@\macc@nucleus\rel@kern{0.4}\kern\dimen@}%
      \advance\dimen@0.4\dimexpr\macc@kerna
      \let\final@kern#2%
      \ifdim\dimen@<\z@ \let\final@kern1\fi
      \if\final@kern1 \kern-\dimen@\fi
    \else
      \overline{\rel@kern{-0.6}\kern\dimen@#1}%
    \fi
  }%
  \macc@depth\@ne
  \let\math@bgroup\@empty \let\math@egroup\macc@set@skewchar
  \mathsurround\z@ \frozen@everymath{\mathgroup\macc@group\relax}%
  \macc@set@skewchar\relax
  \let\mathaccentV\macc@nested@a
  \if#31
    \macc@nested@a\relax111{#1}%
  \else
    \def\gobble@till@marker##1\endmarker{}%
    \futurelet\first@char\gobble@till@marker#1\endmarker
    \ifcat\noexpand\first@char A\else
      \def\first@char{}%
    \fi
    \macc@nested@a\relax111{\first@char}%
  \fi
  \endgroup
}
\makeatother

\usepackage{amsbsy}

\usepackage{marvosym}

\DeclareMathAlphabet{\smallchanc}{OT1}{pzc}%
                                 {m}{it}
\DeclareFontFamily{OT1}{pzc}{}
\DeclareFontShape{OT1}{pzc}{m}{it}%
             {<-> s * [1.100] pzcmi7t}{}
\DeclareMathAlphabet{\mathchanc}{OT1}{pzc}%
                                 {m}{it}

\newcommand{\mcM}{\mathchanc{M}}

\newcommand{\sA}{\mathscr{A}}
\newcommand{\sB}{\mathscr{B}}

\newcommand{\sE}{\mathscr{E}}
\newcommand{\sF}{\mathscr{F}}
\newcommand{\sG}{\mathscr{G}}
\newcommand{\sH}{\mathscr{H}}

\newcommand{\sL}{\mathscr{L}}
\newcommand{\sM}{\mathscr{M}}
\newcommand{\sN}{\mathscr{N}}
\newcommand{\sO}{\mathscr{O}}

\newcommand{\bC}{\mathbb{C}}

\newcommand{\bN}{\mathbb{N}}

\newcommand{\bP}{\mathbb{P}}
\newcommand{\bQ}{\mathbb{Q}}

\newcommand{\bZ}{\mathbb{Z}}

\DeclareSymbolFont{largesymbolsA}{U}{jkpexa}{m}{n}
\SetSymbolFont{largesymbolsA}{bold}{U}{jkpexa}{bx}{n}
\DeclareMathSymbol{\varprod}{\mathop}{largesymbolsA}{16}
\makeatletter
\newcommand{\LeftEqNo}{\let\veqno\@@leqno}
\makeatother
\newcommand\exist{\exists\,}

\newcommand{\into}{\hookrightarrow}

\newcommand{\properideal}%
        {\subsetneq}

\newcommand{\leteq}{\colon\!\!\!=}

\newcommand\dash[1]{\rule[-.2ex]{#1}{.4pt}}

\DeclareMathOperator{\Aut}{Aut}

\DeclareMathOperator{\codim}{codim}

\DeclareMathOperator{\Hilb}{Hilb}

\DeclareMathOperator{\Ob}{{Ob}}

\DeclareMathOperator{\red}{red}

\DeclareMathOperator{\supp}{{supp}}
\DeclareMathOperator{\sym}{{Sym}}

\newcommand{\factor}[2]{\left. \raise .2em\hbox{\ensuremath{#1}\vphantom{$I^d$}}
\hskip -.1em \right/ \hskip -.4em \raise -.3em\hbox{\ensuremath{#2}}}%
\newcommand\mtimes[3]{{\varprod_{#1}^{#2}}_{\raise 1ex \hbox{\scriptsize #3}}}%

\newcommand{\kdot}{{{\,\begin{picture}(1,1)(-1,-2)\circle*{2}\end{picture}\,}}}

\newcommand{\dcx}[1]{{\omega}^\kdot_{#1}}

\def\dimcoh#1.#2.#3.{h^{#1}(#2,#3)}
\def\hypcoh#1.#2.#3.{\mathbb H_{\vphantom{l}}^{#1}(#2,#3)}
\def\loccoh#1.#2.#3.#4.{H^{#1}_{#2}(#3,#4)}
\def\dimloccoh#1.#2.#3.#4.{h^{#1}_{#2}(#3,#4)}
\def\lochypcoh#1.#2.#3.#4.{\mathbb H^{#1}_{#2}(#3,#4)}
\def\seslong#1.#2.#3.{0  \longrightarrow  #1   \longrightarrow 
 #2 \longrightarrow #3 \longrightarrow 0} 
\def\sesshort#1.#2.#3.{0
 \rightarrow #1 \rightarrow #2 \rightarrow #3 \rightarrow 0}
\def\Iff#1#2#3{
\hfil\hbox{\hsize =#1
\vtop{\noin #2}
\hskip.5cm 
\lower.5\baselineskip\hbox{$\Leftrightarrow$}\hskip.5cm
\vtop{\noin #3}}\hfil\medskip}
\newcommand{\union}\cup
\newcommand{\intersect}\cap
\newcommand{\Union}\bigcup
\newcommand{\Intersect}\bigcap
\def\myoplus#1.#2.{\underset #1 \to {\overset #2 \to \oplus}}

\newcommand{\resto}[1]{\raise -.5ex\hbox{$\vert$}_{#1}}

\newcommand\noin{\noindent}

   [2017/06/13 v0.1 finetuning mabliautoref and theorem setup]

\RequirePackage{hyperref}

\newcommand{\sectionsize}{} 
\newcommand{\theoremsize}{} 

\renewcommand{\subsectionautorefname}{\sectionsize\sf \subsectionautorefname}

\makeatletter
\@ifdefinable\equationname{\let\equationname\equationautorefname}
\def\equationautorefname~#1\@empty\@empty\null{\protect{\theoremsize
    (#1\@empty\@empty\null)}}%
\@ifdefinable\AMSname{\let\AMSname\AMSautorefname}
\def\AMSautorefname~#1\@empty\@empty\null{
  ( #1\@empty\@empty\null)}%
\@ifdefinable\itemname{\let\itemname\itemautorefname}
\def\itemautorefname~#1\@empty\@empty\null{\theoremsize{%
    {#1}}\@empty\@empty\null%
}%
\makeatother

\RequirePackage{amsthm}
\RequirePackage{aliascnt}
\newcommand{\basetheorem}[3]{%
    \newtheorem{#1}{#2}[#3]
    \newtheorem*{#1*}{#2}
    \expandafter\def\csname #1autorefname\endcsname{#2}
}%
\newcommand{\maketheorem}[3]{%
    \newaliascnt{#1}{#2}
    \newtheorem{#1}[#1]{\theoremsize #3}
    \aliascntresetthe{#1}
    \expandafter\def\csname #1autorefname\endcsname{#3}
    \newtheorem{#1*}{#3}
}%
\newcommand{\baseremark}[3]{%
    \newtheorem{#1}{#2}{#3}
    \newtheorem*{#1*}{#2}
    \expandafter\def\csname #1autorefname\endcsname{#2}
}%
\newcommand{\makeremark}[3]{%
    \newaliascnt{#1}{#2}
    \newtheorem{#1}[equation]{#3}
    \aliascntresetthe{#1}
    \expandafter\def\csname #1autorefname\endcsname{\theoremsize\sf #3}
    \newtheorem{#1*}{#3}
}%
\theoremstyle{plain}   

\basetheorem{theorem}{Theorem}{section}

\theoremstyle{definition}    

\newcommand{\hide}[1]{}

\DeclareFontFamily{OMS}{rsfs}{\skewchar\font'60}
\DeclareFontShape{OMS}{rsfs}{m}{n}{<-5>rsfs5 <5-7>rsfs7 <7->rsfs10 }{}
\DeclareSymbolFont{rsfs}{OMS}{rsfs}{m}{n}
\DeclareSymbolFontAlphabet{\scr}{rsfs}

\setlist[enumerate, 1]{font=\upshape}

\numberwithin{figure}{section}

\DeclareMathOperator{\Gal}{Gal}
\DeclareMathOperator{\KSBm}{KSB}
\DeclareMathOperator{\Sl}{Sl}

\DeclareMathOperator{\Spec}{Spec}
\DeclareMathOperator{\vol}{vol}

\newcommand{\wtilde}{\widetilde}

\newcommand{\sblank}{\dash{.6em}}
\newcommand{\ksb}{KSB-stable\xspace}
\newcommand{\lst}{locally KSB-stable\xspace}

\theoremstyle{plain}
\maketheorem{prop}{theorem}{Proposition}
\maketheorem{cor}{theorem}{Corollary}
\maketheorem{lem}{theorem}{Lemma}
\maketheorem{conj}{theorem}{Conjecture}

\newtheorem{proposition}[prop]{Proposition}
\newtheorem{corollary}[cor]{Corollary}
\newtheorem{lemma}[lem]{Lemma}

\theoremstyle{definition}
\maketheorem{defini}{theorem}{Definition}
\newtheorem{definition}[defini]{Definition}
\theoremstyle{remark}
\maketheorem{rem}{theorem}{Remark}
\newtheorem{remark}[rem]{Remark}
\theoremstyle{example}
\maketheorem{exa}{theorem}{Example}
\newtheorem{example}[exa]{Example}

\maketheorem{notat}{theorem}{Notation}
\maketheorem{notr}{theorem}{Notation-Remark}
\maketheorem{defnot}{theorem}{Definition-Notation}
\maketheorem{prep}{theorem}{Preparation}
\newtheorem{notation}[notat]{Notation}
\newtheorem{not-rem}[notr]{Notation-Remark}
\newtheorem{def-not}[defnot]{Definition-Notation}

\maketheorem{set}{theorem}{Set-up}
\newtheorem{set-up}[set]{Set-up}

\maketheorem{term}{theorem}{Terminology}
\newtheorem{terminology}[term]{Terminology}

\maketheorem{back}{theorem}{Background}

\maketheorem{observ}{theorem}{Observation}

\maketheorem{fac}{theorem}{Fact}

\maketheorem{ass}{theorem}{Assumption}
\newtheorem{assumption}[ass]{Assumption}

\maketheorem{sumnot}{theorem}{Summary-Notation}
\newtheorem{summary-notation}[sumnot]{Summary-Notation}

\maketheorem{deftheo}{theorem}{Definition-Theorem}
\newtheorem{definition-theorem}[deftheo]{Definition-Theorem}

\maketheorem{conn}{theorem}{Convention}

\maketheorem{clai}{theorem}{Claim}
\newtheorem{claim}[clai]{Claim}

\maketheorem{sclai}{theorem}{Subclaim}

\setlist[enumerate]{label=(\thetheorem.\arabic*), before={\setcounter{enumi}{\value{equation}}}, after={\setcounter{equation}{\value{enumi}}}}

\numberwithin{equation}{theorem}

\makeatletter
\let\amsmath@bigm\bigm

\renewcommand{\bigm}[1]{%
  \ifcsname fenced@\string#1\endcsname
    \expandafter\@firstoftwo
  \else
    \expandafter\@secondoftwo
  \fi
  {\expandafter\amsmath@bigm\csname fenced@\string#1\endcsname}%
  {\amsmath@bigm#1}%
}
\newcommand{\DeclareFence}[2]{\@namedef{fenced@\string#1}{#2}}
\makeatother

\DeclareFence{\mid}{|}

\begin{document}

\title[Asymptotic GIT-stability of KSB-stable varieties]%
{Asymptotic GIT-stability of KSB-stable varieties}

\author{S\'andor J Kov\'acs} \address{S\'andor Kov\'acs, Department of Mathematics,
  University of Washington, Box 354350, Seattle, Washington, 98195, U.S.A}
\email{\href{mailto:skovacs@uw.edu }{skovacs@uw.edu }}
\urladdr{\href{http://sites.math.washington.edu/~kovacs/current/}
  {http://sites.math.washington.edu/~kovacs/current/}}

\author{Behrouz Taji} \address{Behrouz Taji, School of Mathematics and Statistics,
The University of New South Wales Sydney, NSW 2052 Australia}
\email{\href{mailto:b.taji@unsw.edu.au}{b.taji@unsw.edu.au}}
\urladdr{\href{https://web.maths.unsw.edu.au/~btaji//}
  {https://web.maths.unsw.edu.au/~btaji/}}

\thanks{Both authors were supported by Australian Research Council Discovery 
Project grant DP240101934 ``Deformation 
of Singularities through Hodge Theory and Derived Categories".}

\keywords{Flat families, KSB-stable varieties, GIT-stability, complete moduli spaces, Deligne--Mumford stacks} 

\subjclass[2010]{14D06, 14D23, 14E05, 14D07.}


\setlength{\parskip}{0.19\baselineskip}


\begin{abstract}
  We prove that KSB-stable varieties introduced by Koll\'ar and Shepherd-Barron as a
  generalization of Deligne--Mumford's stable curves are stable in the sense of
  geometric invariant theory (GIT).
  More precisely, we show that KSB-stable varieties are asymptotically GIT-stable with 
  respect to a sequence of linearizations defined by the Hodge bundle. 
\end{abstract}

\maketitle


\newcommand\hmarginpar[1]{}

\section{Introduction}
\label{s:intro}

Construction and study of moduli spaces of varieties is at the heart of
classification theory in algebraic geometry.  As it parametrizes isomorphism classes
of objects sharing prescribed properties, a moduli space is usually constructed as
the orbit space of a group action on a certain quasi-projectve scheme $H$ naturally
embedded in a Hilbert scheme.

To construct such a quotient space in the category of quasi-projective schemes
Mumford developed \emph{Geometric Invariant Theory} (GIT).  In particular he showed
that if all closed points of such a scheme $H$ are 
\emph{GIT-semistable}, then the desired moduli space exists as a \emph{GIT-quotient},
which is a projective scheme.  The aim of this paper is to construct such quotient
spaces for an important class of varieties in characteristic zero.

While it is relatively straightforward to define what GIT-stability means, it is much
less straightforward 
to test whether a given point is GIT-stable. Here stability is always measured with
respect to an ample line bundle called a \emph{linearization}. By choosing the
\emph{standard linearization}--or rather a family of standard linearizations--that
naturally exists on a sequence of parametrizing schemes $H$ (as subschemes of Hilbert
schemes) this problem was solved in several major cases: The case of \emph{stable
  curves}, where the resulting coarse moduli space is denoted by $\overline M_g$
\cite{MR1304906}, \cite{Mum77}, \cite{Gie82}, of vector bundles on curves, and of
abelian varieties \cite{MR1304906}.

Generalizing this approach to higher dimensions ran into several obstacles. One of
the fundamental problems was to find the correct higher dimensional analogue of
Deligne-Mumford's stable curves.  This was achieved in a seminal paper by Koll\'ar
and Shepherd-Barron \cite{Ksb88}.  These higher dimensional analogues are now
referred to as KSB-stable varieties.

In higher dimensions Mumford's choice of linearization, the so-called standard
linearization (see \autoref{eq:standard} for the precise description), was used to
prove GIT-stability for smooth stable varieties, i.e., canonically polarized
projective manifolds \cite{Viehweg95}.  
The existence of (unique) smooth K\"ahler--Einstein metrics \cite{MR0451180} on those
manifolds leads to their \emph{asymptotic (GIT-)semistablity} (\autoref{def:asym}),
shown by the combination of results of Luo \cite{Luo98}, Donaldson
\cite{Don01} 
and 
\cite{Wan04}.
In the singular case, with respect to Mumford's standard linearization, it was shown
by Wang and Xu \cite{WX14}, using \cite[Prop.~3.4]{Mum77}, that KSB-stable varieties
are \emph{not} asymptotically semistable.  Finding suitable choices of linearizations
for which all KSB-stable varieties (with fixed dimension and volume) are
GIT-semistable has so far proven to be elusive.

In the absence of a GIT approach one may follow an alternative 
construction for the coarse moduli space $\mcM^{\KSBm}_{d, \nu}$ of KSB-stable
varieties of dimension $d$ and volume $\nu$ as a projective scheme
\cite{ModBook}.  
The first step is to show that the corresponding Deligne--Mumford stack
$\mathcal M^{\KSBm}_{d, \nu}$ is of finite type and proper. This relies, among other
things, on multiple deep results in birational geometry, including
\cite{BCHM10}. 
These properties of $\mathcal M^{\KSBm}_{d,\nu}$ then allow us to employ the results
of Keel--Mori \cite{KM97} and Koll\'ar \cite{Kollar97a} to realize
$\mcM^{\KSBm}_{d, \nu}$ as a proper algebraic space of finite type.  Once this is
achieved, the next step is to use Koll\'ar's projectivity criterion for complete
moduli spaces \cite{Kollar90,Fuj18,KP17} (see \autoref{thm:Kol}) together with
positivity properties of direct image sheaves $f_*\omega^{[k]}_{X/B}$ (see
\autoref{ff}) due to Kawamata \cite{Kaw81}, Viehweg \cite{Vie83,MR996558}, Koll\'ar
\cite{Kollar87}, and Fujino \cite{Fuj18} to prove that $\mcM^{\KSBm}_{d, \nu}$ is a
projective scheme.  The latter is accomplished by finding that
\begin{equation}\label{eq:det}
\det\big(f_*\omega^{[k]}_{X/B}  \big)
\end{equation} 
descends to a $\bQ$-ample line bundle on $\mcM^{\KSBm}_{d, \nu}$ (for sufficiently
large and divisible $k$) (cf.~\cite{KP17}).

On the other hand, 
in light of Fujino's semipositivity result \cite{Fuj18},
one would still expect that $\mcM^{\KSBm}_{d, \nu}$ can be realized as a GIT-quotient
via linearizations in terms of \autoref{eq:det}---for the embedded universal
KSB-stable family over the associated Hilbert scheme (see \autoref{eq:KSBlin} and
\S.\ref{ss:Hilbert}).  Our main result is finding an explicit sequence of
linearizations (see \autoref{eq:KSBlin} for the precise description) that verifies
this prediction.






\begin{theorem}\label{thm:main1}
  There exists a sequence of linearizations with respect to which all KSB-stable
  varieties are asymptotically (GIT)-stable. In particular $\mcM^{\KSBm}_{d, \nu}$
  can be realized as a GIT-quotient.
\end{theorem}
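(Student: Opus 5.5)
The plan is to reduce GIT-stability to a positivity statement and then use Koll\'ar's ampleness machinery, as in Viehweg's approach to moduli of polarized manifolds \cite{Viehweg95}. Fix $d,\nu$. By boundedness, for a sufficiently large and divisible $m$ every KSB-stable $X$ of dimension $d$ and volume $\nu$ has $\omega_X^{[m]}$ very ample with vanishing higher cohomology. This gives a locally closed subscheme $H$ of a Hilbert scheme parametrizing $m$-canonically embedded KSB-stable varieties in $\bP^{N}$, with $N+1 = h^0(\omega^{[m]})$. Let $f\colon \mathcal X\to H$ be the universal family. It is acted on by $G=\Sl(N+1)$. The linearizations I would use are of the form
\[
\lambda_{k}=\det\big(f_*\omega^{[km]}_{\mathcal X/H}\big)^{\otimes a_k}\otimes \det\big(f_*\omega^{[m]}_{\mathcal X/H}\big)^{\otimes b_k},
\]
where the integers $a_k,b_k$ are chosen so that the $\mu_{N+1}$-part of the stabilizer acts trivially. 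Equivalently, $\lambda_k$ is the Pl\"ucker-type determinant of the $k$-th Hilbert point twisted to be $\Sl$-invariant in the weight sense. For the sense in which this produces "asymptotic" stability, I would follow \autoref{def:asym}.

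The key steps, in order, are these.

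\emph{Step 1.} Show that $H$ is quasi-projective, $G$-invariant, and that every closed point of $H$ has finite reduced stabilizer. Finiteness follows from $\Aut(X)$ being finite for KSB-stable $X$, and properness of the action follows from separatedness of the moduli functor.

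\emph{Step 2.} Apply Viehweg's stability criterion: if some $G$-linearized line bundle $\lambda$ on $H$ has the property that, for every $G$-equivariant compactification, the sections of powers of $\lambda$ that are $G$-invariant separate enough points, then all points are stable. Concretely, I would use Koll\'ar's ampleness lemma (\autoref{thm:Kol}). The classifying map $H\to \mathrm{Gr}$ given by $\sym^k$ of the multiplication maps $\sym^k f_*\omega^{[m]}\to f_*\omega^{[km]}$ has finite fibers modulo $G$. Together with weak positivity of $f_*\omega^{[km]}_{\mathcal X/H}$ on the proper stack (Fujino \cite{Fuj18}, see \autoref{ff}), this yields that $\det f_*\omega^{[km]}$ is ample on the coarse space with the $G$-invariant sections realizing it.

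\emph{Step 3.} Transfer this ampleness back into a Hilbert–Mumford statement. For each one-parameter subgroup $\rho$ and each point $h\in H$, compute the $\mu$-weight of $\lambda_k$ at the limit $\lim_{t\to 0}\rho(t)h$. The limit lies outside the KSB locus, in the closure $\bar H$, so $\lambda_k$ must be extended to $\bar H$ as the corresponding Pl\"ucker bundle. Positivity of this weight for large $k$ is exactly asymptotic stability.

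The main obstacle is Step 3: controlling the weight at boundary points of $\bar H$, where the family is no longer KSB-stable. The known negative result \cite{WX14} shows this fails for the standard linearization. The correction must therefore come from the $\det f_*\omega^{[km]}$ factor, whose weight expands as a polynomial in $k$ with leading term governed by the (nef, by Fujino) CM-type intersection number. My first attempt would be to pass to the test configuration over $\bA^1$ induced by $\rho$ and compactify it over $\bP^1$. I would then use stable reduction to replace the central fiber by the KSB-stable limit, which must be $X$ itself by separatedness. Finally I would compare the two determinants via the degree of $f_*\omega^{[km]}$ on $\bP^1$, using that the trivial $\Sl$ action with nontrivial weight forces strict positivity of this degree, by Koll\'ar's ampleness lemma applied to the non-isotrivial-in-$\bP^N$ family.
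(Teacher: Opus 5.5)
Step~1 is fine, but there is a genuine gap already in the choice of linearization. You describe $\lambda_k$ as ``equivalently the Pl\"ucker-type determinant of the $k$-th Hilbert point''. That makes it a power of Mumford's standard linearization, cf.~\autoref{eq:standard}. Indeed, write $f_*\omega^{[m]}_{\mathcal X/H}\simeq\sB^{\oplus N+1}$ as in \autoref{eq:doublestar}. Then $f_*\omega^{[km]}\simeq f_*\xi^*\sO_{\bP^N}(k)\otimes\sB^{k}$ and $\sB^{N+1}\simeq\det f_*\omega^{[m]}$. So the $(N+1)$-st power of the Pl\"ucker bundle has $a_k=N+1$ and $b_k=-k\cdot\operatorname{rk}f_*\omega^{[km]}<0$.

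For this linearization \cite{WX14} shows that KSB-stable varieties are in general not asymptotically semistable. So with your $\lambda_k$ the claim you want to prove is false. A ``correction from the $\det f_*\omega^{[km]}$ factor'' cannot rescue it, because that factor is already part of the Pl\"ucker bundle. Your selection rule does not single out a better choice either. With the linearizations by base change, which exist because the polarization is functorial, the center of $\Sl(N+1)$ already acts trivially on every $\det f_*\omega^{[j]}$. The paper instead uses $\det f_*\omega^{[m]}\otimes\big(\det f_*\omega^{[m\mu]}\big)^{N+1}=(\sB\otimes\det f_*\omega^{[m\mu]})^{N+1}$ with $\mu\geq\mu_0$ (\autoref{eq:lin}), i.e.\ $b>0$.

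The bridge from positivity to stability is also missing. Step~2 at best gives ampleness of a descended determinant on the coarse space, which is already known from \cite{KP17}, \cite{Fuj18}. By itself this is not a stability statement on $H$, and Step~3 cannot convert it into one, for three reasons:
\begin{itemize}
\item For a one-parameter subgroup $\rho$, the family $t\mapsto\rho(t)\cdot h$ over $\mathbb G_m$ is isotrivial. Its stable reduction is the constant family $X\times\bP^1$, on which $\det f_*\omega^{[km]}$ has degree $0$.
\item Koll\'ar's ampleness lemma gives nothing here, because the classifying map to $\mathrm{Gr}/GL$ is constant. Being non-isotrivial ``in $\bP^N$'' is exactly what the structure group divides out.
\item The Hilbert--Mumford weight is read off at the embedded limit $X_0\in\overline H\setminus H$, and the stable limit forgets $X_0$.
\end{itemize}

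The paper never computes weights. It applies Viehweg's criterion \cite[Thm.~4.25]{Viehweg95} on the partial compactification $Z=U\cup V\supseteq G\times H$ of \cite[Lem.~4.15]{Viehweg95}. On $Z$ the families pulled back via $\phi|_U$ and $p_2|_V$ glue, by functoriality of the polarization, to a family $g$ in $\sM(Z)$. The hypothesis to verify is that $g_*\omega^{[m]}\otimes\det g_*\omega^{[m\mu]}$ is weakly positive over $Z_{\rm red}$. Since $Z$ is in general neither normal nor proper, what \cite{Fuj18} provides (nefness over curves, weak positivity over normal bases) does not suffice: weak positivity does not descend from a resolution of such a base.

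The main content of the proof is \autoref{thm:WPkey}, \autoref{cor:WP}: weak positivity over arbitrary reduced quasi-projective bases. It uses properness of the KSB stack through the stratified stable reduction of \autoref{lem:stratified}, a finite flat cover, Gabber's extension \autoref{thm:Gab}, and flat descent. Your ``weak positivity on the proper stack'' does not address this. Note also that \autoref{thm:Kol} is Koll\'ar's projectivity criterion, not his ampleness lemma.
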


As mentioned already, this is not based on Mumford's standard linearization, which in
turn uses the classical Pl\"ucker embeddings. In particular, \autoref{thm:main1} is
not in contradiction with the results of \cite{WX14}.

In this paper a moduli functor is a contravariant functor from the category of
schemes to sets, assigning (isomorphism classes of) polarized, flat and projective
families satisfying certain properties of schemes (see \S \ref{ss:Good} for the
details).  We refer to \autoref{KSB} for the definition of the moduli functor
$\sM^{\KSBm}_{d, \nu}$ of KSB-stable varieties.

\subsection{An alternative GIT approach}
Our approach in finding a sequence of linearizations different from Mumford's is
based on combining a stability criterion of Viehweg \cite[Thm.~4.25]{Viehweg95} and a
criterion of Koll\'ar for projectivity of algebraic coarse moduli spaces
\cite{Kollar90} for functorially polarized moduli functors of $\bQ$-polarized
schemes, see \S\ref{ss:Good} and \autoref{def:functoial}.  This criterion makes use of
the following \emph{semipositivity} notion for functors.

\begin{definition}[\protect{\cite[Def.~2.4]{Kollar90}}]\label{def:SP}
  A moduli functor $\sM$ of ($\bQ$-)polarized schemes is \emph{semipositive}, if
  there is a $d_0\in \bN$ such that for every smooth complete curve $C$ and any
  $\big( f: (X, \sL) \to C \big)\in \sM(C)$, $f_*\sL^{[d]}$ is nef, for every
  $d \geq d_0$.
\end{definition}

\begin{theorem}[\protect{Koll\'ar's projectivity criterion \cite[Thm.~2.6]{Kollar90}}]
\label{thm:Kol}
A good, functorially polarized, and semipositive moduli functor admits a coarse
moduli space which is a projective scheme.
\end{theorem}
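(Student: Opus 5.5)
The plan is to follow Koll\'ar's original strategy in \cite{Kollar90}: first produce the coarse moduli space as a proper algebraic space, then exhibit a line bundle on it that is ample. For the first step I would use that a good moduli functor is, by definition, bounded, separated, complete (proper), and has finite reduced automorphism groups. Because of the functorial polarization, boundedness lets me fix $d\gg0$ so that, for every family $f\colon (X,\sL)\to B$ in $\sM(B)$, the sheaves $f_*\sL^{[d]}$ and $f_*\sL^{[dk]}$ are locally free of constant ranks $w$ and $q$ and commute with base change. Then $\sM$ is the quotient stack $[H/\mathrm{GL}(w)]$ of a locally closed subscheme $H$ of a Hilbert scheme. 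Finite stabilizers together with separatedness give a Deligne--Mumford stack, so Keel--Mori \cite{KM97} (see also \cite{Kollar97a}) yields a coarse moduli space $\mcM$ that is a proper algebraic space.

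The second step is to construct the candidate ample bundle. For a family over $B$, put $W=f_*\sL^{[d]}$ and $Q=f_*\sL^{[dk]}$ with $k\gg0$. The natural multiplication map $\sym^k W\to Q$ is surjective, and the fibre $X_b$ is recovered from the quotient $\sym^k W_b\to Q_b$, since this quotient is the degree-$k$ piece of the homogeneous ideal of the embedding. My candidate is $\det Q$, possibly twisted by a power of $\det W$. Some power of it is invariant under all stabilizers, so after passing to a suitable tensor power it descends to a line bundle $\lambda$ on $\mcM$. To prove $\lambda$ is ample I would apply Nakai--Moishezon on the algebraic space $\mcM$. For each irreducible closed subspace $V\subset\mcM$, I would use properness of the stack to find a normal projective variety $Z$ with a finite surjective map $Z\to V$ and a family in $\sM(Z)$ inducing it, up to a finite cover. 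It then suffices to show that the pullback of $\lambda$ has positive top self-intersection on every such $Z$.

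The positivity on $Z$ comes from Koll\'ar's ampleness lemma. Let $W$ be a nef vector bundle of rank $w$ on a normal projective variety $Z$, and let $\sym^k W\to Q$ be a surjection onto a rank-$q$ bundle. Suppose the classifying map from $Z$ to $\mathrm{Grass}(w,q)/\mathrm{GL}(w)$ has finite fibres on closed points, that is, it is quasi-finite modulo the structure group. Then $\det Q$ is big, and in fact ample. In our situation the nefness of $W$ is exactly semipositivity, Definition~\ref{def:SP}. It holds over every curve, and hence over any projective base, since nefness is tested on curves and we may normalize and base change. The quasi-finiteness holds because the isomorphism class of a fibre is determined by the quotient up to the $\mathrm{GL}(w)$-action, and distinct points of $Z$ map to distinct points of $\mcM$ up to finitely many. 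Applying the lemma to every $Z$ gives $\lambda^{\dim V}\cdot V>0$ for all $V$, so $\lambda$ is ample, and the proper algebraic space $\mcM$ is therefore a projective scheme.

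I expect the main obstacle to be the ampleness lemma itself. The plan there is to reduce to a GIT statement on the Grassmannian. Take the frame bundle $P\to Z$ of $W$, which is a principal $\mathrm{GL}(w)$-bundle, and the equivariant map $P\to\mathrm{Grass}$. Using that the points with finite stabilizer and closed orbit lie in the stable locus for the Pl\"ucker linearization, choose invariant sections $s$ of $\sO(m)$ on the Grassmannian that separate orbits near the image. These sections descend to sections of $(\det Q)^{m}\otimes(\det W)^{-c}$ on $Z$, which separate points generically. Nefness of $W$, and hence of $\det W$, then lets me absorb the twist and conclude that $\det Q$ is big on every subvariety. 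The delicate points are to arrange that the invariant sections are defined on the whole image, and to keep track of the exponent of $\det W$.
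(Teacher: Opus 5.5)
The paper does not prove this statement; it quotes \cite[Thm.~2.6]{Kollar90}. Your outer architecture is Koll\'ar's. You build a proper algebraic coarse space via Keel--Mori, descend a power of $\det f_*\sL^{[dk]}$, apply Nakai--Moishezon on $\mcM$, and invoke the Ampleness Lemma. That lemma is applied to $W=f_*\sL^{[d]}$, which is nef by semipositivity after base change and normalization of curves, and to $Q=f_*\sL^{[dk]}$. Finiteness of the classifying map comes from separatedness and finite automorphism groups. (A minor point: before you know $\mcM$ is projective, you can only expect a \emph{generically} finite map to $V$ from a normal projective $Z$, so you need the ``big'' form of the lemma. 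That is harmless.) The genuine gap is in your proof of the Ampleness Lemma, which is where the real content of the theorem lies.

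Your argument needs the image points of the classifying map to be GIT-stable, or at least semistable, for $\mathrm{SL}(w)$ acting on the Grassmannian with the Pl\"ucker linearization. Nothing gives you this.
\begin{itemize}
\item Finite stabilizers say nothing about semistability.
\item The closed-orbit condition in Mumford's definition refers to the semistable locus, so it presupposes semistability. In the projective Grassmannian itself, no $\mathrm{SL}(w)$-orbit with finite stabilizer $\Gamma$ is closed, since $\mathrm{SL}(w)/\Gamma$ is affine and positive-dimensional.
\item At unstable points every invariant section of $\sO(m)$ vanishes. So the sections you descend to $(\det Q)^m\otimes(\det W)^{-c}$ may vanish identically on $Z$.
\end{itemize}
This is not a technicality. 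The exponent $c$ is forced by the action of the scalars, and $(\det Q)^m\otimes(\det W)^{-c}$ is then a power of the Pl\"ucker line bundle, i.e.\ Mumford's standard linearization, cf.~\autoref{eq:standard}. As recalled in the introduction, \cite{WX14} shows that KSB-stable varieties need not be asymptotically semistable for it.

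There is also a telltale sign that the step cannot be right as stated. If it worked, projectivity would already follow from GIT plus properness. Semipositivity, which in your sketch only absorbs a twist at the very end, would then be superfluous.

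Koll\'ar's lemma makes no stability assumption on the Grassmannian points. The positivity has to come from the semipositivity of $W$, and it must enter the construction of sections in an essential way. So either quote \cite[3.9]{Kollar90} as a black box, or supply an argument of that kind. The route of this paper is instructive: via \autoref{thm:WPkey} and Viehweg's criterion (\autoref{thm:ViehwegStability}), it proves stability for the linearization \autoref{eq:lin}. There $\det f_*\sL^{[k]}$ appears with a \emph{positive} exponent, precisely because the standard linearization fails.
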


Following \cite{ModBook}, a \emph{good moduli functor} is one that is separated,
bounded (\autoref{def:bounded}), proper \cite[Def.~ 1.15]{Viehweg95}, consists only
of objects with finite automorphism groups and is parametrized by a constructible
subscheme of a corresponding Hilbert scheme (see \autoref{def:openclosed} and \S
\ref{ss:Hilbert}). Note that this notion differs from Alper's good moduli spaces
\cite{MR3237451}.

Thanks to major breakthroughs in birational geometry, e.g.,
\cite{BCHM10}, 
\cite{HMX18}, 
\cite{HX13} a prime example of a good moduli functor is $\sM^{\KSBm}_{d,
  \nu}$. Furthermore by fundamental results of \cite{Kaw81}, \cite{Cam04} and
\cite[Thm.~1.2]{Fuj18} it is known that $\sM^{\KSBm}_{d, \nu}$ is semipositive.
Therefore, by applying \autoref{thm:Kol}, it follows that $\mcM^{\KSBm}_{d, \nu}$
exists as a projective scheme cf.~\cite{Fuj18}, \cite{KP17}.

In this paper we establish  
\autoref{thm:main1} as an instance of the following GIT-analogue
of \autoref{thm:Kol}.

\begin{theorem}\label{thm:main2}
  The objects of 
  any good and functorially polarized semipositive moduli functor $\sM$
  are asymptotically stable (with respect to \autoref{eq:lin}).
  In particular $\sM$ is coarsely
  representable by a GIT-quotient.
\end{theorem}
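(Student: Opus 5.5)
The plan is to reduce \autoref{thm:main2} to Viehweg's stability criterion \cite[Thm.~4.25]{Viehweg95}, using the semipositivity of $\sM$ to supply the required positivity on the parametrizing Hilbert scheme. Since $\sM$ is good, for $d\gg0$ there is a locally closed subscheme $H\subseteq\Hilb$ with a universal family $f\colon(X,\sL)\to H$ in $\sM(H)$. Its fibres are embedded by $|\sL^{[d]}|$ into $\bP^{N}$, $N=h^0(\sL^{[d]})-1$, and $H$ is stable under $G=\mathrm{Sl}(N+1)$; the fibres have finite automorphism groups. The candidate linearization \autoref{eq:lin} is
\[
\lambda_{d,k}=\det\big(f_*\sL^{[dk]}\big)^{\otimes r_d}\otimes\det\big(f_*\sL^{[d]}\big)^{\otimes -r_{dk}},
\]
where $r_m=\operatorname{rk} f_*\sL^{[m]}$. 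Twisting by the $f_*\sL^{[d]}$ factor makes $\lambda_{d,k}$ invariant under the scalar action, so it is $G$-linearized. Asymptotic stability will mean that for all $d\gg0$ and suitable $k\gg0$, every point of $H$ (or of its closure, which is still parametrized by $\sM$ because $\sM$ is proper) is GIT-stable for $\lambda_{d,k}$.

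Viehweg's criterion requires three things. First, $H$ must be quasi-projective with proper $G$-action and finite stabilizers. This follows from $\sM$ being separated with finite automorphism groups. Second, $\lambda_{d,k}$ must be ample on the (closure of the) Hilbert scheme; this is the Plücker-type statement and can be taken from Viehweg's construction. Third, and this is the substance, one needs a weak positivity property: $\lambda_{d,k}$ should be \emph{ample with respect to the moduli problem}, or at least for every $G$-orbit closure the invariant sections of large multiples must separate orbits. Viehweg reduces this to showing that the sheaf
\[
f_*\sL^{[dk]}\ \text{ is weakly positive over } H
\]
and that the multiplication map $\sym^k f_*\sL^{[d]}\to f_*\sL^{[dk]}$ is surjective with kernel controlled.

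Weak positivity is where semipositivity enters. By \autoref{def:SP}, $f_*\sL^{[m]}$ is nef on every smooth complete curve mapping into the moduli problem, for $m\ge d_0$. Functorial polarization makes this compatible with base change. Covering tricks then upgrade curve-nefness to weak positivity over the compactified Hilbert scheme, in the style of Kollár's ampleness lemma \cite{Kollar90}. Kollár's lemma applied to the classifying map $H\to\operatorname{Grass}$ induced by the multiplication kernel, whose fibres are finite on $G$-orbits by finiteness of automorphisms, gives that $\det$ of the quotient is ample modulo $G$. Feeding this into \cite[Thm.~4.25]{Viehweg95} yields stability of all points for $\lambda_{d,k}$. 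Finally, the GIT-quotient $H/G$ is a projective scheme coarsely representing $\sM$, by the standard argument that the quotient of the parameter scheme of a good functor is a coarse moduli space.

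The main obstacle is the third step: producing the ample-modulo-$G$ statement uniformly over the closure of $H$ and not only over curves. This requires checking that the finite-stabilizer and separatedness hypotheses make the classifying map quasi-finite on orbits, so that Kollár's ampleness lemma applies. It also requires checking that the exponents in $\lambda_{d,k}$ match those in Viehweg's criterion. I would first try to verify this by transporting the proof of \autoref{thm:Kol} verbatim, replacing "descends to an ample line bundle on the coarse space" with "$G$-invariant sections separate orbits."
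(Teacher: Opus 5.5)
There are two genuine gaps.

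\textbf{Wrong linearization.} Your $\lambda_{d,k}$ is not \autoref{eq:lin}. Correct its exponent $-r_{dk}$ to $-k\,r_{dk}$, which is what scalar invariance actually requires. It then becomes the $r_d$-th power of Mumford's standard Hilbert-point linearization (cf.\ \autoref{eq:standard}); that is why its ampleness is a free Pl\"ucker statement. The linearization \autoref{eq:lin} is instead $\sA_k(f,\sL)\otimes\sA^r_{k\mu}(f,\sL)\simeq\big(\sB\otimes\sA_{k\mu}(f,\sL)\big)^r$, with a \emph{positive} power of $\det f_*\sL^{[k]}$. Its $G$-linearization comes from functorial polarization (\autoref{prop:LBC}). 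Its ampleness already needs positivity: $\sB$ and $\sA_k\simeq\sB^r$ are weakly positive because $f_*\sL^{[k]}\simeq\bigoplus^r\sB$ is, and this is combined with ampleness of \autoref{eq:standard}. For the standard linearization the statement you set out to prove is false. $\sM^{\KSBm}_{d,\nu}$ is good, functorially polarized and semipositive, yet by \cite{WX14} KSB-stable varieties are not asymptotically semistable with respect to it, as the introduction stresses.

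\textbf{Missing positivity input.} The input you feed into \cite[Thm.~4.25]{Viehweg95} is misidentified and not established. The criterion does not ask for ampleness modulo $G$ through a classifying map to a Grassmannian; that is Koll\'ar's argument for projectivity of the coarse space. Nor does it ask only for weak positivity over $H$. Using functorial polarization, one glues the families pulled back along $\phi|_U$ and $p_2|_V$ into $g\colon(Y,\sL_Y)\to Z=U\cup V$ over Viehweg's partial compactification of $G\times H$ \cite[Lem.~4.15]{Viehweg95}. One then needs $g_*\sL_Y^{[k]}\otimes\sA_{k\mu}(g,\sL_Y)$, compatible with the linearization on $U\cap V$, to be weakly positive over $Z_{\red}$. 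Since $Z$ is in general neither normal nor proper, this requires \autoref{Crit} over \emph{every} reduced quasi-projective base, i.e.\ \autoref{thm:WPkey}. That is the real content of the paper.

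Covering tricks ``in the style of Koll\'ar'' do not give this, because weak positivity does not descend from a desingularization to a non-normal base. The paper instead uses properness of the stack to build a stratified extension via Kawamata covers, over a finite \emph{flat} cover $\sigma\colon V\to B$. Flatness lets weak positivity descend by \autoref{prop:Trace}. It then applies Gabber's theorem (\autoref{thm:Gab}) to extend $\sigma^*f_*\sL^{[k]}$ to a locally free sheaf on a compactification which is nef after pullback to a smooth proper model.

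Finally, properness of $\sM$ does not make $\overline H$ parametrized by $\sM$. Points of $\overline H\setminus H$, such as boundary limits of orbits $G\cdot x$, are not $k$-embedded objects of $\sM$, because the action on $H$ is proper. So there is no family on $\overline H$ on which to run your argument.
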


\autoref{thm:main2} is a consequence of the following GIT-stability criterion based
on Viehweg's use \cite[Thm.~4.25]{Viehweg95} of the notion of \emph{weak positivity}
(\autoref{def:WP}).

\subsubsection{A stability criterion for parameter schemes of bounded and separated
  moduli functors; \autoref{thm:ViehwegStability}.} \label{SCIntro} Let $H$ be a
quasi-projective subscheme of the Hilbert scheme $\Hilb^{r-1}$ parametrizing objects
of a functorially polarized, separated and bounded moduli functor $\sM$ embedded in
$\mathbb P^{r-1}$ (see \S \ref{ss:Hilbert}).  Then the quotient of $H$ by the the
special linear group arising from change of coordinates in $\bP^{r-1}$ forms a
quasi-projective coarse moduli scheme (a geometric quotient), if the following
condition is satisfied.

\noindent\refstepcounter{equation}\theequation\label{Crit}.
\emph{There exists a $k_0>0$ such that for any reduced quasi-projective scheme
  $Y$, 
    \begin{equation*}\label{eq:SP}
      \big(  f: (X, \sL) \to Y \big)  \in \sM(Y)  \implies
      \begin{minipage}[r]{.5\linewidth}
        $f_*\sL^{[k]}$ is weakly positive over $Y$, \\ for every $k$ divisible by
        $k_0$.
      \end{minipage}
    \end{equation*}}
\smallskip
\noindent We refer to \autoref{thm:ViehwegStability} for the specific choice of
linearizations with respect to which this GIT-stability criterion holds.

With this criterion, \autoref{thm:main2} is a direct consequence of the following
result.

\begin{theorem}\label{thm:WPkey}
  Let $\sM$ be a good moduli functor.  If $\sM$ is semipositive, then there is a
  $k_0>0$ satisfying {\rm\autoref{Crit}}
\end{theorem}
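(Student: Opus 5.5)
We must show: semipositivity (nefness of $f_*\sL^{[d]}$ over smooth complete curves for $d\ge d_0$) implies weak positivity of $f_*\sL^{[k]}$ over any reduced quasi-projective base $Y$, for $k$ divisible by some $k_0$. The plan is to take $k_0$ so that it is a multiple of $d_0$ and so that, by boundedness of $\sM$ together with Serre vanishing and cohomology and base change on the bounded parameter scheme, $f_*\sL^{[k]}$ is locally free, compatible with arbitrary base change, and relatively globally generated for every $k$ divisible by $k_0$. Then we reduce weak positivity over $Y$ to a statement about curves. This reduction is Viehweg's criterion: a locally free sheaf $\sE$ on a reduced quasi-projective $Y$ is weakly positive over a dense open $Y_0$ once its pullback to a compactification is controlled by curves through $Y_0$.

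\textbf{Step 1 (compactify the family).} Since $\sM$ is bounded and parametrized by a constructible subscheme of a Hilbert scheme, the moduli map $Y\to H/G$ is available. Because $\sM$ is proper, we may, after a finite (Kawamata-type) covering and a birational modification, extend $f$ to a family in $\sM$ over a projective compactification $\bar Y'$ of a generically finite cover $Y'\to Y$. Weak positivity descends along finite surjective maps and can be checked after such modifications, since weak positivity over $Y$ only requires generic global generation over a dense open set. So it suffices to treat $\bar Y'$ projective with $f'_*\sL^{[k]}$ locally free and compatible with base change.

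\textbf{Step 2 (nef on projective base implies weakly positive).} On a projective reduced $\bar Y'$, a locally free sheaf whose restriction to every curve is nef is nef, and nef locally free sheaves are weakly positive everywhere. For any curve $C\to\bar Y'$, take the normalization $\tilde C$. Base change compatibility identifies the pullback with $f_{\tilde C*}\sL^{[k]}$ for the pulled-back family, which lies in $\sM(\tilde C)$ because $\sM$ is a functor. This sheaf is nef by semipositivity, since $k\ge d_0$. Hence $f'_*\sL^{[k]}$ is nef on $\bar Y'$.

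\textbf{Step 3 (return to $Y$).} Restrict to the preimage of $Y$, use the base change isomorphism over $Y$, and then apply the descent of weak positivity under finite surjective maps, which holds via a trace or norm argument. This yields weak positivity of $f_*\sL^{[k]}$ over $Y$, possibly after shrinking to a dense open set, which is allowed by the definition.

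The main obstacle is Step 1. Extending an arbitrary family over a quasi-projective reduced $Y$ to a family in $\sM$ over a projective base requires properness, in a form that works for reduced but non-normal and higher-dimensional bases. A second difficulty is that the cover is typically non-flat, and base change of $f_*\sL^{[k]}$ must be maintained. I would first try a simpler route that avoids compactifying the family. The idea is to use the functorial polarization to realize $f_*\sL^{[k]}$ as the pullback, under the moduli map to the parameter scheme $H$, of a fixed sheaf. That sheaf would be shown nef on a projective closure using the proper universal family over a finite cover of the coarse space. Weak positivity is stable under pullback, which would then give the claim directly.
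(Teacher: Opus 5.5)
There is a genuine gap, and it sits exactly where the difficulty of the theorem lies. Condition \autoref{Crit} asks that $f_*\sL^{[k]}$ be weakly positive \emph{over $Y$}. That means $\sym^{\alpha k} f_*\sL^{[k]}\otimes\mathcal H^{k}$ must be globally generated at \emph{every} point of $Y$, including the non-normal points. Generation over some dense open subset is strictly weaker, and it would not suffice for the application: in \autoref{thm:ViehwegStability} the criterion is fed into Viehweg's stability theorem for the glued family over $Z_{\red}$, where weak positivity is needed over a prescribed open set containing $G\times H$. So your remark that one may shrink to a dense open set, ``which is allowed by the definition'', is incorrect. Without it, Steps~1 and~3 break down, for two reasons. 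First, sections of $\mu^*\sE$ on a resolution or birational modification $\mu\colon Y''\to Y$ descend to sections of $\sE$ only where $\sO_Y\to\mu_*\sO_{Y''}$ is an isomorphism, essentially over the normal locus. Second, descent along a finite surjective $\sigma$ by a trace argument requires the trace to split $\sO_Y\into\sigma_*\sO_{Y'}$. This holds when $\sigma$ is flat or $Y$ is normal, but not for an arbitrary finite (let alone generically finite) cover of a non-normal $Y$; compare \autoref{prop:Trace}. In effect your Steps~1--3 are the normal-base argument via \autoref{eq:STAR} from the introduction, which the paper points out does not extend to non-normal bases.

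The missing idea is a nef, locally free extension of the pullback of $f_*\sL^{[k]}$ to a projective compactification of a \emph{flat} cover of $Y$ itself, rather than of a modification of it. The paper compactifies $Y=B\setminus D$ with $B$ projective. In \autoref{lem:stratified} it builds a finite flat $\sigma\colon V\to B$ and a stratification $\{V^{(i)}\}$ of $V^\circ=\sigma^{-1}(B\setminus D)$ with the following property: on each stratum, $\sigma$ factors through a Kawamata cover $W^{(i)'}$ of a resolution of the stratum's closure, over which the family extends by properness of $\mathcal M$. Consequently, for every curve $C^\circ\to V^\circ$, even one inside a small, possibly non-normal stratum, the family extends over the compactified curve $C$ itself. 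Combined with separatedness, this verifies the hypotheses of Gabber's extension theorem (\autoref{thm:Gab}). That theorem yields a locally free $\widehat\sF$ on a projective compactification $\widehat V$ of $V^\circ$, extending $\sigma^*f_*\sL^{[k]}|_{V^\circ}$ and agreeing on a common resolution with $\wtilde g_*\wtilde\sL^{[k]}$, which is nef by semipositivity. Hence $\widehat\sF$ is nef, so weakly positive over all of $\widehat V\supseteq V^\circ$, and flatness of $\sigma$ gives descent to all of $B\setminus D$.

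Your fallback route does not get around this. A family over $Y$ maps to $H$ only locally, after trivializing $\bP(f_*\sL^{[k]})$. Weak positivity of the universal sheaf over the generally non-normal $H$ is itself an instance of the theorem. And pulling back along a finite cover of the coarse space $\mcM$ only matches the two families fibrewise. For such an argument you would at least need a finite \emph{flat} surjection $M'\to\mathcal M$ onto the stack from a projective scheme, so that $Y\times_{\mathcal M}M'\to Y$ is finite, flat, and carries a family pulled back from $M'$. You neither state nor justify the existence of such a cover.
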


In particular as a consequence of \autoref{thm:WPkey} and the semipositivity of
$\sM^{\KSBm}_{d,\nu}$ \cite[Thm.~1.2]{Fuj18}, for KSB-stable families, the following
result extends \cite[Thm.~6.22]{Viehweg95}, where fibers were assumed to be smooth,
and \cite[Thm.~1.7]{Fuj18} 
to arbitrary reduced quasi-projective base schemes.  This is what is needed to apply
the GIT-stability criterion \autoref{Crit} for $\sM^{\KSBm}_{d, \nu}$, and therefore
to prove \autoref{thm:main1} using \autoref{thm:main2}.
\begin{corollary}\label{cor:WP}
  Given $d, \nu >0$, there is an integer $k_0>0$ such that for every KSB-stable family
  $f:X\to B$, over any reduced quasi-projective scheme $B$, with $\vol(X_b)= \nu$, and $\dim(X_b)=d$, $f_*\omega_{X/B}^{[k]}$ is
  weakly positive over $B$, for every $k$ divisible by $k_0$.
\end{corollary}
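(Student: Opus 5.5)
This follows directly from \autoref{thm:WPkey} applied to the moduli functor $\sM=\sM^{\KSBm}_{d,\nu}$, polarized by the relative canonical sheaf, so that $\sL^{[k]}=\omega^{[k]}_{X/B}$. The plan is to check the hypotheses of \autoref{thm:WPkey} for this functor and then translate its conclusion \autoref{Crit} into the claimed statement.

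The first step is to verify that $\sM^{\KSBm}_{d,\nu}$ is a good moduli functor.
\begin{itemize}
\item Boundedness follows from \cite{HMX18}. Fixing a uniform $m$ such that $\omega^{[m]}_{X_b}$ is very ample with vanishing higher cohomology, all objects embed into a fixed $\bP^{r-1}$ with fixed Hilbert polynomial.
\item Separatedness and properness follow from uniqueness and existence of log canonical models, via \cite{BCHM10} and \cite{HX13}.
\item The automorphism groups are finite, since $K_{X_b}$ is ample and $X_b$ is slc.
\item Local closedness of the parametrizing locus in the Hilbert scheme is the usual result on locally stable (Koll\'ar) families, as recorded in \cite{ModBook}.
\end{itemize}
This functor is functorially polarized by $\omega_{X/B}^{[\cdot]}$, since formation of the reflexive powers commutes with base change for KSB-stable families. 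All of this is recalled in the paper as the prime example of a good moduli functor, so I would cite it rather than reprove it.

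The second step is semipositivity in the sense of \autoref{def:SP}. For a smooth projective curve $C$ and a KSB-stable family $f\colon X\to C$, the sheaf $f_*\omega^{[k]}_{X/C}$ is nef for all $k\ge k_1$ with $k_1$ uniform, by \cite[Thm.~1.2]{Fuj18} (building on \cite{Kaw81}, \cite{Cam04}). Uniformity in $k$ uses boundedness: once $k$ is past the index and vanishing threshold, which is uniform in $d$ and $\nu$, the statement holds for all larger $k$.

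With both hypotheses in place, \autoref{thm:WPkey} provides $k_0$ with the property \autoref{Crit}. Unwinding \autoref{Crit} for a KSB-stable family $f\colon X\to B$ over a reduced quasi-projective $B$ gives exactly the corollary.

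The main obstacle is not in this deduction but inside \autoref{thm:WPkey}, which we are allowed to assume. The only delicate point here is making sure that the indices match. The $k_0$ must be chosen divisible by a uniform Cartier index $N$, so that $\omega^{[k]}_{X/B}$ is a line bundle compatible with base change for $N\mid k$. It must also be divisible by the $d_0$ of semipositivity. Both are uniform by boundedness, so replacing $k_0$ by a common multiple settles this.
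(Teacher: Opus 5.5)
Your proposal is correct and follows the paper's argument: the corollary is \autoref{thm:WPkey} applied to the KSB functor, which is good by boundedness \cite{HMX18}, separatedness from uniqueness of canonical models, properness \cite{HX13} and constructibility \cite[Cor.~8.52]{ModBook}, and semipositive by \cite[Thm.~1.2]{Fuj18}. One small slip: fixing $d$ and $\nu$ gives finitely many Hilbert polynomials $h_i$ rather than a single one (\autoref{rem:KSB}); this changes nothing, since you can work with each $\sM^{\KSBm}_{h_i}$ and take a common multiple of the resulting $k_0$'s.
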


\subsection{Overview of the strategy of the proof of the main results}
We first explain the main ingredients of the proof of \autoref{thm:main2} 
in the special setting of \autoref{cor:WP}.
When $B$ is normal this directly follows from 
\cite{Kaw81}, \cite{Viehweg83} and \cite{Fuj18}
for a desingularization $\mu: \widehat B\to B$ of $B$ and the 
pullback $\widehat f: \widehat X\to \widehat B$
of $f$ via $\mu$. More precisely, for sufficiently large and divisible $k$, 
one uses the isomorphism 
\begin{equation}\label{eq:STAR}
\mu^* f_*\omega_{X/B}^{[k]} \simeq \widehat f_*\omega^{[k]}_{\widehat X/\widehat B},
\end{equation}
(see \S.\ref{FAMS} for notation), together with a descent property of weakly positive
sheaves to normal schemes (see \autoref{prop:Trace}), i.e., weak positivity of the
right-hand side of \autoref{eq:STAR} (\cite{Fuj18}) implies that of
$f_*\omega^{[k]}_{X/B}$. However when $B$ is not normal this property fails in
general, \emph{unless $B$ is proper} and the sheaves are locally free (\autoref{rem:nef}).  This
constitutes a major obstacle in proving \autoref{cor:WP}.  It turns out that
properness of $\sM^{\KSBm}_{d, \nu}$ plays a key role in overcoming this difficulty.
Although substantially easier, the proof in the one-dimensional base case already
elucidates how properness of the functor (or the stack) can be used for establishing
\autoref{cor:WP}.

\subsubsection{One-dimensional base case}\label{ss:curve}
Assuming that $\dim(B)=1$, let $\overline f: \overline X\to \overline B$ be a
projective compactification of $f: X\to B$ and $\pi^{(1)}: W^{(1)} \to \overline B$
the normalization of $\overline B$, with $f^{(1)}: X^{(1)}\to W^{(1)}$ being the
pullback family. Let $f': X' \to W'$ be its stable reduction through the finite
morphism $\tau: W'\to W^{(1)}$, i.e., $f'$ is $\KSBm$-stable (\S.\ref{KSBFAMS}) and
only differs (up to isomorphism over $W'$) from the pullback of $\overline f$ over
$D:= \overline B\backslash B$ (\autoref{def:reduction}).

\smallskip
\noindent
\emph{A flat base change and an alternative reduction:} We next consider finding
another stable reduction after a certain flat base change.  Namely we construct a
finite covering $\sigma: V\to \overline B$ that is flat and generically coincides
with a rational map $\eta: V\dashrightarrow W'$ composed with $\pi^{(1)}\circ \tau$
(see \autoref{lem:covering} for the details of this construction):
\begin{equation}\label{curve}
  \begin{aligned}
    \xymatrix{
      &&&&  W' \ar[d]^{\tau}  && \ar[ll]_{f'}  X' \\
      \wtilde X \ar[d]_{\wtilde f}  &&&&  W^{(1)}\ar[d]^{\pi^{(1)}} \\
      \wtilde V \ar[r]^{\pi''} \ar@/^5mm/[rrrruu]^{\wtilde \eta} & V'\ar[r]^{\pi'} &
      V \ar[rr]^{\sigma} \ar@{-->}[rruu]^{\eta} \ar[rr]^{\tau} && \overline B &&
      \overline X \ar[ll]_{\overline f} } .
  \end{aligned}
\end{equation}
Now let $\pi: \wtilde V\to V$ be the normalization of $V$, naturally factoring
through the normalization $\pi': V'\to V$ of $V$ strictly over $\sigma^{(-1)}(D)$
via $\pi'': \wtilde V\to V'$.

\smallskip
\noindent
\emph{A functorial extension of $f_*\omega_{X/B}^{[k]}$ after flat base change:}
Using the fact that $f_*\omega_{X/B}^{[k]}$ and $f'_*\omega_{X'/W'}^{[k]}$ commute
with base change (\autoref{rem:ABC}), we find that
$$
(\pi')^*\sigma^* f_*\omega^{[k]}_{X/B} \;\;\; \text{and} \;\;\; (\eta')^*
f_*\omega_{X'/W'}^{[k]}
$$
glue to form a locally free extension $\sF'$ of the former sheaf on $V'$ such that
\begin{equation}\label{eq:compatible}
(\pi'')^* \sF' \simeq \wtilde f_*\omega^{[k]}_{\wtilde X/\wtilde V} . 
\end{equation}
Now, by \cite{Kaw81}, \cite{Fuj18}, and the fact that $\wtilde V$ is proper, we know the right-hand side of 
\autoref{eq:compatible} is nef (see \autoref{rem:nef}). Therefore so is $\sF'$. 
In particular, after restricting to $\sigma^{-1}(B)$ we find that 
$\sigma^*f_*\omega^{[k]}_{X/B}$ is weakly positive. 
With $\sigma$ being flat, weak positivity descends, leading to the 
weak positivity of $f_*\omega^{[k]}_{X/B}$ (see \autoref{item:splitflat}). 

\subsubsection{Higher dimensional base}\label{higher}
Here the problem is considerably more subtle. In particular 
it is not clear how the extension of $f_*\omega^{[k]}_{X/B}$, similar to $\sF'$ in \ref{ss:curve} \autoref{eq:compatible}, 
over a proper compactification of a covering can be constructed. 
We achieve this by the following steps: 
\begin{enumerate}
\item\label{item:extend} We use the existence of the coarse space
  $\mcM^{\KSBm}_{d, \nu}$ of $\mathcal M_{d, \nu}^{\KSBm}$ as a proper algebraic space \cite{Kollar97a}, \cite{KM97},
  combined with an extension result for Deligne--Mumford
  stacks~\cite[Lem.~2.2.3]{AV02}, to construct a \emph{stratified stable reduction}
  together with a proper scheme $V$ equipped with a flat covering
  $\sigma: V\to \overline B$ as in \autoref{curve}.  This is the content of
  \autoref{lem:stratified}.

\item We then use \autoref{item:extend} and an extension theorem due to Gabber (see \S.\ref{GAB})
to effectively reduce the construction of the desired extension of $f_*\omega^{[k]}_{X/B}$ 
to its extension over discrete valuation rings. We note that this extension may require  
replacing $V$ by another biraitonal model (similar to $V'$ in \autoref{curve}). 
This is done in \S \ref{ss:WPkey}. 
\end{enumerate}

To prove \autoref{thm:main1} we then apply \autoref{cor:WP} to the criterion
\S\ref{SCIntro}, or more specifically \autoref{thm:ViehwegStability}.  As a consequence, after fixing a finite
number of components of $\Hilb^{r-1}$ determined by $d$ and $\nu$, we find that for
every sufficiently large and divisible $k>0$, and the corresponding (universally embedded)
KSB-stable family $f: \mathfrak X\to H^{r,k}\subset \Hilb^{r-1}$, there is
a $\mu_0:= \mu_0(k)>0$ such that 
the GIT-stable locus of
\begin{equation}\label{eq:KSBlin}
  \det(f_*\omega^{[k]}_{\mathfrak X/H^{r,k}}) \otimes (\det f_*\omega_{\mathfrak
    X/H^{r,k}}^{[k\mu]})^r ,  
\end{equation}
is the entire $H^{r,k}$, for every $\mu\geq \mu_0$.  Here $H^{r,k}$ takes the place
of $H$ in \S\ref{SCIntro}, or \autoref{thm:ViehwegStability}, and $k>0$ is the index
of the canonical embedding of the KSB-stable varieties in $\sM^{\KSBm}_{d,\nu}$ into
$\bP^{r-1}$ (see \S\ref{ss:Hilbert} for the details in a more general setting).

\subsection{Optimal choices of linearizations}
One would ideally like to find a stable linearization over the subscheme $H^{r,k}$
(as in \S.\ref{higher}) to be precisely
$\det\big( f_*\omega^{[k]}_{\mathfrak X/H^{r,k}}\big)$, cf.~\ref{eq:KSBlin}.  However
ampleness of $\det \big(f_*\omega^{[k]}_{\mathfrak X/H^{r,k}}\big)$ (or equivalently
the CM line bundle) beyond the locus of $H^{r,k}$ parametrizing smooth canonically
polarized projective varieties or those with only klt singularities, established in \cite{Viehweg06}, is not known.  Our
approach in this paper only shows that a \emph{product} of such line bundles as in 
\autoref{eq:KSBlin} is ample over the entire $H^{r,k}$, or equivalently the product
of the associated CM line bundles defined by the two indices $k$ and $k\mu$ is ample
over $H^{r,k}$ (see \autoref{thm:ViehwegStability}).

\subsection{Acknowledgements.}
We are thankful to Jack Hall, Kenneth Ascher, Harold Blum, Giovanni Inchiostro, and
Julius Ross for answering our questions and useful comments.

\section{Preliminaries and background}
\label{s:background}

For the basic definitions and properties of singularities, including those of
\emph{semi-log-canonical singularities}, or \emph{slc} for short, we refer the reader
to \cite{KollarSingsOfTheMMP}.  We will generally follow \cite{ModBook} for notions
of (KSB-)stability of varieties and their moduli problems.

\begin{definition}\label{def:families-pairs}
  In this article every scheme will be assumed to be defined over $\bC$ and a
  \emph{variety} means a reduced, finite type, but not necessarily irreducible scheme
  (over $\bC$).
  %
  %
  \end{definition}

\begin{notation}\label{not:det}
  Given a reduced scheme $X$ and a coherent sheaf $\sF$ of rank $r$, for any
  $m\in \bN$, we define $\sF^{[m]} : = \big( \sF^{\otimes m} \big)^{**}$, where
  $(\sblank)^{**}$ denotes the double dual. 
  We will apply the same notation
  for all tensor operations. In particular,
  $\bigwedge^{[m]}\sF\leteq \left(\bigwedge^m\sF\right)^{**}$.  Furthermore,
  $\det\sF$ will denote $\bigwedge^{[r]}\sF=\big(\bigwedge^r \sF\big)^{**}$. Notice
  that if $X$ is regular, then $\det\sF$ is a line bundle.  
\end{notation}

\begin{definition}\label{def:divisorial}
We call a coherent sheaf $\sL$ of rank 
one a \emph{divisorial sheaf}, if it is $S_2$ and locally free in codimension one \cite[Def.~3.25]{ModBook}.
\end{definition}

\subsection{Weakly positive sheaves}\label{s:WP}
We briefly review some key positivity notions for coherent sheaves that are relevant
for our purposes, while referring to \cite[\S.2.3]{Viehweg95} for an in-depth
treatment of this material and further background.

\begin{definition}
  A locally free sheaf $\sE$ on a proper scheme $Y$ is said to be \emph{nef}, if for
  every proper nonsingular curve $C$ and any morphism $\gamma: C\to Y$,
  $\deg(\sL) \geq 0$ for every invertible quotient $\sL$ of $\gamma^*\sE$.
\end{definition}

\begin{definition}\label{def:GGG}
  An $\sO_Y$-module $\sF$ on a reduced scheme $Y$ is called \emph{globally generated
    over an open subset $V\subseteq Y$}, if the natural map 
  \[
    H^0(Y, \sF) \otimes_{\bC} \sO_{V} \longrightarrow \sF \otimes_{\sO_Y} \sO_{V}
  \]
  is surjective over $V$. 
\end{definition}

\begin{defini}[\protect{\cite[Def.~2.11]{Viehweg95}}]
\label{def:WP}
Let $Y$ be a quasi-projective 
variety and $U\subseteq Y$ an open subset.  A torsion free sheaf $\sF$ on $Y$ is
called \emph{weakly positive over $U$}, if $\sF|_U$ is locally free, and for some
ample line bundle $\mathcal H$ on $Y$, for every $\alpha\in \bN$, there exists a
$\beta\in \bN$ such that
  \[
    \sym^{\alpha \cdot k}\sF \otimes \mathcal H^{k}
  \]
  is globally generated over $U$, for any multiple $k$ of $\beta$.  Note that this
  definition is independent of the choice of
  $\mathcal H$~\cite[Lem.~2.14]{Viehweg95}.  A torsion free sheaf $\sG$ on $Y$ is
  \emph{weakly negative} (over $U$), if its dual $\sG^*$ is weakly positive (over
  $U$).
\end{defini}

\begin{remark}[\protect{\cite[2.12.2]{Viehweg95}}]
  \label{rem:nef}
  If $Y$ is projective and $\sF$ is locally free, 
  then $\sF$ is weakly positive over $Y$, if and only if it is nef.
\end{remark} 

\begin{proposition}[\protect{Lem.~\cite[2.15.(2)]{Viehweg95}}]\label{prop:Trace}
  Let $\sigma: X\to Y$ be a finite morphism of reduced quasi-projective schemes and
  $\sF$ a torsion free sheaf on $Y$.  
 \begin{enumerate} 
 \item\label{item:splittrace} Assume that the trace map splits the natural map
   $\sO_Y \into \sigma_*\sO_X$.  With $U:= \gamma^{-1}(V)$, we have:
   \[
     \text{$\gamma^* \sF$ is weakly positive over $U$ $\iff$ $\sF$ is weakly positive
       over $V$.}
   \]
 \item\label{item:splitflat} Assume that $\sigma$ is flat. Then the above splitting
   condition holds and hence so does the conclusion in 
   \autoref{item:splittrace}
   \cite[\href{https://stacks.math.columbia.edu/tag/0BVH}{Tag 0BVH}]{stacks-project}.
  \end{enumerate}
\end{proposition}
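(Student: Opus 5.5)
The plan is to prove the two implications of \autoref{item:splittrace} separately, reading $\gamma$ as $\sigma$, so $U=\sigma^{-1}(V)$. We then deduce \autoref{item:splitflat} from the existence of a normalized trace.

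For ($\Leftarrow$), fix an ample $\mathcal H$ on $Y$. Since $\sigma$ is finite, $\sigma^*\mathcal H$ is ample on $X$, and by \cite[Lem.~2.14]{Viehweg95} we may test weak positivity of $\sigma^*\sF$ with it. Pullback commutes with symmetric powers and tensor products, and $\sigma^*\sF|_U=\sigma^*(\sF|_V)$ is locally free. It also preserves global generation over an open set: a surjection $H^0(Y,\sG)\otimes\sO_V\to \sG|_V$ pulls back to a surjection onto $\sigma^*\sG|_U$, and its image consists of the pullbacks of sections. Applying this to $\sG=\sym^{\alpha k}\sF\otimes\mathcal H^k$ gives the claim immediately.

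For ($\Rightarrow$), the key tool is the projection formula $\sigma_*\sigma^*\sG\simeq \sG\otimes\sigma_*\sO_X$ together with exactness of $\sigma_*$ for the affine morphism $\sigma$. Fix once and for all $m>0$ such that $\sigma_*\sO_X\otimes\mathcal H^m$ is globally generated. Now let $\sG$ be any coherent sheaf on $Y$ with $\sigma^*\sG$ globally generated over $U$. Pushing forward the surjection $H^0(X,\sigma^*\sG)\otimes\sO_X\to\sigma^*\sG$ (over $U$) shows that $H^0(Y,\sG\otimes\sigma_*\sO_X)\otimes\sigma_*\sO_X\to \sG\otimes\sigma_*\sO_X$ is surjective over $V$. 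Twisting by $\mathcal H^m$ and using global generation of $\sigma_*\sO_X\otimes\mathcal H^m$, we get that $\sG\otimes\sigma_*\sO_X\otimes\mathcal H^m$ is globally generated over $V$. The split trace $\sigma_*\sO_X\to\sO_Y$ induces a surjection $\sG\otimes\sigma_*\sO_X\otimes\mathcal H^m\to\sG\otimes\mathcal H^m$, so $\sG\otimes\mathcal H^m$ is globally generated over $V$.

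Apply this to $\sG=\sym^{2\alpha k}\sF\otimes\mathcal H^{k}$ for $k$ a multiple of the $\beta$ attached to $2\alpha$ by the weak positivity of $\sigma^*\sF$ (tested with $\sigma^*\mathcal H$), taking $k\ge m$. This shows that $\sym^{2\alpha k}\sF\otimes\mathcal H^{k+m}$ is globally generated over $V$. Tensoring with the globally generated $\mathcal H^{k-m}$ (enlarging $\beta$ so that $\mathcal H^{k-m}$ is generated) gives global generation of $\sym^{\alpha(2k)}\sF\otimes\mathcal H^{2k}$ over $V$. This holds for all multiples $2k$ of $2\beta$, which is exactly the definition. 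The only delicate point is this bookkeeping, namely absorbing the fixed twist $\mathcal H^m$ by doubling $\alpha$ and the exponent; everything else is formal. Local freeness of $\sF|_V$ follows from that of $\sigma^*\sF|_U$ by faithful flatness or, more simply, is part of the standing hypothesis on $V$.

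For \autoref{item:splitflat}, if $\sigma$ is finite and flat then $\sigma_*\sO_X$ is locally free, and its rank is constant, say $n$, on each connected component of $Y$. The trace map $\operatorname{Tr}\colon\sigma_*\sO_X\to\sO_Y$ is defined and satisfies $\operatorname{Tr}(1)=n$. Since we work over $\bC$, the map $\frac1n\operatorname{Tr}$ splits $\sO_Y\into\sigma_*\sO_X$ (cf.\ \cite[Tag 0BVH]{stacks-project}), and \autoref{item:splittrace} applies.
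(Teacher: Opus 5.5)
Your argument is correct and is essentially the standard proof of \cite[Lem.~2.15]{Viehweg95}, which the paper only cites: pullback for one implication; the projection formula, global generation of $\sigma_*\sO_X\otimes\mathcal H^m$ and the splitting for the other; and $\frac1n\operatorname{Tr}$ in the flat case, which needs $\sigma$ surjective so that $n>0$. The one loose end is local freeness of $\sF|_V$ in part (1) without flatness: under the paper's definition this must be proved rather than assumed, and it follows from injectivity of $\sO_Y\into\sigma_*\sO_X$, because for a local surjection $\sO_Y^r\to\sF$ near $y$ with $r=\operatorname{rk}\sigma^*\sF$ (Nakayama) the pullback $\sO_X^r\to\sigma^*\sF$ is an isomorphism near $\sigma^{-1}(y)$, so the injective composite $K\subseteq\sO_Y^r\into(\sigma_*\sO_X)^r$ of the kernel $K$ factors through $\sigma_*\sigma^*K\to(\sigma_*\sO_X)^r$, which is zero, and hence $K=0$.
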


\subsection{Families of varieties}\label{FAMS}
\begin{defnot}\label{def:canonical}
  For a morphism of finite type $f: X\to B$ of relative dimension $n$, we set
  $\dcx{X/B}$ to denote the relative normalized dualizing complex $f^! \sO_B$ over
  $B$ and define the relative canonical sheaf by
  $\omega_{X/B} : = h^{-n}\big(\dcx{X/B})$.  For an $S_2$ and $G_1$ (Gorenstein in
  codimension one) scheme $X$, a canonical divisor is denoted by $K_X$ (see
  \cite[\S5]{Kov13} for more details).
  \end{defnot}
  \begin{defnot}\label{ff}
    Given a morphism of finite type $f:X\to B$, let $\sL$ be a \emph{family of
      coherent sheaves of rank one over $B$}. 
    Let $j:U\hookrightarrow X$ be the inclusion map for the locus over which $\sL$ is
    locally free and set $Z:= X\setminus U$. Assume that
    $\codim_{X_b}(X_b\cap Z)\geq 2$, for every $b\in B$. By an abuse of notation, for
    any $m\in \bZ$, define $\sL^{[m]}:= j_*(\sL|_U)^m$, cf.~\cite[3.35.1]{ModBook}.
  \end{defnot}
  \begin{defnot}\label{ff2}
    Assume that $X_b$ is $G_1$, for every $b\in B$, by $K_{X/B}$ we denote the
    divisor class corresponding to $\omega_{X/B}$.  Let $j:U\hookrightarrow X$ be the
    inclusion map for the locus over which $K_{X/B}$ is Cartier and set
    $Z:= X\setminus U$. Let $Z:= X\setminus U$.  Assume that
    $\codim_{X_b}(X_b\cap Z)\geq 2$ for each $b\in B$, similarly to \autoref{ff},
    define
    \begin{equation}\label{eq:Can}
      \omega_{X/B}^{[m]}=  j_*(\omega^m_{U/B}).
    \end{equation}
    This is isomorphic to $\sO_{X}(m K_{X/B})$ \cite[Def.~3.35]{ModBook}.
  \end{defnot}
\begin{remark}
  If $X$ is $S_2$ and $G_1$ and $B$ is Gorenstein, then
  $\omega_{X/B} \simeq \sO_X(K_X - f^*K_B)$ by \cite[(3.3.6)]{Conrad00}. In
  particular $\omega_{X/B}$ is reflexive and of rank one, and $\omega^{[m]}_{X/B}$ in
  \autoref{eq:Can} coincides with the $m$-th reflexive power of $\omega_{X/B}$.
\end{remark}

\begin{notation}[Pullback]
  \label{Pull}
  For any morphism of schemes $f: X\to B$ and $\psi: B' \to B$, $X_{B'}$ denotes
  $X \times_{B} B'$.  Let $\psi': X_{B'}\to X$ and $f': X_{B'}\to B'$ denote the
  natural projections.  Given an $\sO_X$-module $\sF$ on $X$, $\sF_{B'}$ (resp.\
  $(X,\sF)_{B'}$) denotes $(\psi')^*\sF$ (resp.\ $(X_{B'}, (\psi')^*\sF)$).
\end{notation}

\subsection{KSB-stable families}\label{ss:locstable}
\label{KSBFAMS}

\begin{definition}
  A flat, projective morphism of schemes, $f:X\to B$, with reduced, connected,
  positive-dimensional fibers, will be called a \emph{flat projective family} or, to
  be concise, simply a \emph{projective family}. A coherent sheaf $\sL$ on $X$ 
  is called a \emph{flat family of divisorial sheaves}, if $\sL$ is flat over $B$ and 
  $\sL_b$ is divisorial (\autoref{def:divisorial}), for every $b\in B$ \cite[\S.3.3]{ModBook}.
\end{definition}

\begin{definition}
  \label{def:LS}
  Given two schemes $X$ and $B$, let $f:X\to B$ be a flat morphism of finite type.
  With $U\subseteq X$ as in \autoref{ff2}, $f$ is called \emph{\lst}, if $X_b$ is slc
  for every $b\in B$ and $\omega^{[m]}_{X/B}$ is a flat family of divisorial
    sheaves
  over $B$ for every $m\in \bZ$.  If in addition it is
  a proper family with $\omega_{X/B}$ $f$-ample, then $f$ is said to be \emph{\ksb}.
  When $B=\Spec(\bC)$, $X$ is called a stable or a \emph{KSB-stable variety}.
  \end{definition}
 \begin{remark}\label{rem:BCStable}
   In a \lst family $f:X\to B$, $\omega^{[m]}_{X/B}$ commutes with arbitrary base
   change \cite[\S 3]{ModBook}.  Assuming that $B$ is of finite type, by using base
   change along the fibres of $f$, we find that, for a sufficiently divisible $m_0$,
   $\omega^{[m]}_{X/B}$ is invertible, for any $m$ divisible by $m_0$.
   \end{remark}
\begin{remark}\label{rem:S2}
  In the setting of \autoref{def:LS},
  if $B$ is $S_2$, then so are $\sO_X$ and $\omega_{X/B}^{[m]}$, for any $m\in \bZ$,
  \cite[\S3]{Hassett-Kovacs04}.  
\end{remark}

\subsection{Gabber's Extension Theorem}\label{GAB}
\begin{set-up}\label{setup:Gab}
  Given a reduced quasi-projective scheme $W$, let $\pi: W'\to W$ be a
  desingularization.  Assuming that $\wtilde W$ is a nonsingular projective scheme
  containing $W'$ as a dense and open subscheme,
  let $\sF$ and $\wtilde \sF$ be two locally free sheaves on $W$ and
  $\wtilde W$, respectively, with $\wtilde \sF|_{W'}\simeq \pi^*\sF$.
\end{set-up}

\begin{theorem}[\protect{\cite[\S.5.1]{Viehweg95}}]
  \label{thm:Gab}
  In the setting of \autoref{setup:Gab}, assume that for any nonsingular curve $C$,
  and an open and dense subset $C^\circ\subseteq C$, with a proper morphism
  $\eta^\circ: C^\circ \to W$, there exists a locally free sheaf $\sG_C$ on $C$
  satisfying the following properties.
  \begin{enumerate}
  \item $\sG_C|_{C^\circ} \simeq (\eta^\circ)^*\sF$.
  \item\label{eq:lift} If for each nonsingular curve $C'$, finite morphism
    $\gamma: C'\to C$, and proper morphism $\eta': C'\to \wtilde W$, $\eta'$ is a
    lifting of $\eta^\circ$ in the sense that:
    \begin{equation*}%
      \pi\circ \eta'|_{\gamma^{-1}(C^\circ)} = \eta^\circ \circ \gamma|_{\gamma^{-1}(C^\circ)},
    \end{equation*}
    then 
    $(\eta')^* \wtilde \sF \simeq \gamma^* \sG_C$.    
  \end{enumerate}
  Then, there is a projective compactification $\widehat W$ of $W$ and a locally free
  sheaf $\widehat \sF$ on $\widehat W$, with $\widehat \sF|_W \simeq \sF$, such that
  for any nonsingular proper scheme $W''$ and each pair of birational morphisms
  $\psi: W''\to \wtilde W$ and $\rho: W''\to \widehat W$ removing the indeterminacy
  of $\pi: \wtilde W \dashrightarrow \widehat W$, we have
  $\rho^*\widehat \sF \simeq \psi^*\wtilde \sF$.  In particular if $\wtilde \sF$ is
  nef, then so is $\widehat \sF$.
\end{theorem}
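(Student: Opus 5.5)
The plan is to glue $\sF$ and $\wtilde\sF$ along the boundary of a suitable compactification of $W$, checking compatibility only along curves, which is all the hypotheses control. First choose any projective compactification $\overline W$ of $W$ and blow up to get a proper birational map extending $\pi$; after replacing $\wtilde W$ by a further blow-up (which preserves nefness and the hypotheses, since pullbacks of the $\sG_C$ are unchanged), we may assume there is a morphism $\bar\pi\colon\wtilde W\to\overline W$ extending $\pi$. The first candidate is $\bar\pi_*\wtilde\sF$. In general it is not locally free and does not even agree with $\sF$ on $W$ when $W$ is non-normal, so we work instead with the sheaf of sections of $\wtilde\sF$ that are constant along the fibres of $\bar\pi$.

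The key step is to show that $\wtilde\sF$ is trivial, in a descending sense, on every fibre of $\bar\pi$ over $\overline W\setminus W$. Take any curve in such a fibre, and deform a nearby curve $C^\circ\to W$ degenerating to a point of the boundary. Condition \autoref{eq:lift} says that any two liftings $\eta'$ of the same $\eta^\circ$ give isomorphic pullbacks $\gamma^*\sG_C$. Apply this to liftings differing by a move inside the fibre over the limit point, using a one-parameter family of such curves. Then $\wtilde\sF$ restricted to the fibre must have trivial degree on every curve, and pulls back to the same sheaf regardless of how the limit point is lifted. This forces $\wtilde\sF|_{\bar\pi^{-1}(p)}$ to be trivial for every $p$, including points of $W$; there the statement follows from $\wtilde\sF|_{W'}\simeq\pi^*\sF$.

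Next I would invoke Gabber's descent argument as presented in \cite[\S5.1]{Viehweg95}. After further modifying $\overline W$, by Stein factorization and a seminormalization-type construction along the boundary (via Raynaud--Gruson flattening of $\bar\pi_*$), we obtain $\widehat W$ and a proper birational $\rho$ with connected fibres such that $\wtilde\sF$ descends to a locally free $\widehat\sF$. Over $W$ the descended sheaf is glued to $\sF$ via the isomorphism $\pi^*\sF\simeq\wtilde\sF|_{W'}$. The comparison $\rho^*\widehat\sF\simeq\psi^*\wtilde\sF$ on any common resolution $W''$ then follows because both sides agree over a dense open set. They also agree on every curve by \autoref{eq:lift}, and a locally free sheaf is determined by such data once descent data are fixed.

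Nefness transfers formally. For any curve $C\to\widehat W$, lift it, after a finite cover, to $W''$. The degrees of quotients of $\widehat\sF$ are then computed from $\psi^*\wtilde\sF$, which is nef.

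The main obstacle is the descent step: producing a genuinely locally free sheaf on a compactification from data that are only "constant on fibres in the curve sense". This needs a careful choice of $\widehat W$, since $\bar\pi$ has nonreduced or nonnormal behaviour over the boundary. If the direct argument gets stuck, I would first try to reduce to the case where $\wtilde\sF$ is a line bundle, via the flag variety or Grassmannian embedding $\bP(\wtilde\sF)$, and then use the rigidity lemma for trivial line bundles on connected fibres.
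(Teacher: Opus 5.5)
The paper does not reprove this statement; it quotes it from \cite[\S5.1]{Viehweg95}. Your outline has a genuine gap at what you call the key step.

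For a compactification $\overline W$ chosen in advance, $\wtilde\sF$ need not be trivial on the fibres of $\bar\pi$ over $\overline W\setminus W$, and the hypotheses give no information there. Here is a counterexample to the key step.
Take $W=\bP^2\setminus\ell$ for a line $\ell$, with $W'=W$, $\pi=\mathrm{id}$ and $\overline W=\bP^2$.
Let $\wtilde W$ be the blow-up of $\bP^2$ at a point $p\in\ell$, with exceptional curve $E$, and let $\bar\pi$ be the blow-down.
Put $\sF=\sO_W$ and $\wtilde\sF=\sO_{\wtilde W}(E)$.
Given $\eta^\circ$, let $\bar\eta\colon C\to\wtilde W$ be its extension and set $\sG_C:=\bar\eta^*\wtilde\sF$. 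Since $\pi$ is an isomorphism and $\wtilde W$ is separated, every lifting $\eta'$ equals $\bar\eta\circ\gamma$, so both hypotheses hold.
Yet $\wtilde\sF|_E\simeq\sO_E(-1)$ on the fibre $\bar\pi^{-1}(p)$. The theorem itself holds here, with $\widehat W=\wtilde W$.

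Your heuristic fails at exactly this point. Two liftings of the same $\eta^\circ\circ\gamma$ that agree on $\gamma^{-1}(C^\circ)$ agree everywhere, by separatedness. So they can differ over the boundary only if they already differ over $W'$, which forces $\eta^\circ(C^\circ)$ to lie in the locus where $\pi$ is not an isomorphism. You therefore cannot move the limit point inside a fibre while keeping $\eta^\circ$ fixed. Deforming $C^\circ$ instead changes $\eta^\circ$ and $\sG_C$, so \autoref{eq:lift} compares nothing. What the hypothesis actually encodes is a compatibility of $\wtilde\sF$ along the closure of $W'\times_W W'$ in $\wtilde W\times\wtilde W$, tested on curves.

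Consequently the descent step has nothing to stand on. $\widehat W$ cannot be an arbitrary $\overline W$ modified by Stein factorization or flattening. It has to be constructed from $\wtilde W$ and $\wtilde\sF$, so that its boundary only makes the identifications forced by the closure of $W'\times_W W'$. At the same time $W$ must sit in it as an open subscheme with its original, possibly non-normal, structure.

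Your requirement that $\rho$ have connected fibres is already incompatible with this over non-normal points of $W$, where $\pi$ has disconnected fibres. Gluing $\wtilde\sF$ across different branches is precisely what \autoref{eq:lift} is for. It is also what the paper needs, since there $V^\circ$ need not be normal, and a rigidity-lemma argument cannot supply it.

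Likewise, your final comparison $\rho^*\widehat\sF\simeq\psi^*\wtilde\sF$ presupposes that $\widehat\sF$ restricts to $\sG_C$ on curves. That depends on the construction you have not given, and generic agreement alone does not pin down a locally free extension (twist by boundary divisors). The nefness transfer at the end is correct, but it is the only complete part of the argument.
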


\subsection{Kawamata coverings.}
\begin{deftheo}[\protect{\cite[Prop.~4.1.12]{Laz04-I}, \cite[2.5]{Viehweg95}}]
\label{def:KawCov}
Let $W$ be a nonsingular quasi-projective scheme and $\sum_{k=1}^m\Gamma_k$
a reduced divisor with simple normal crossing (snc) support. For every positive integer $N_k$
there is a nonsingular variety $W'$ together with a finite and flat morphism 
$\tau:W'\to W$ whose ramification index $l_k$
 over each $\Gamma_k$ is any desired multiple of $N_k$. 
 We refer to $\tau$ as a \emph{Kawamata covering}. 
\end{deftheo}

\begin{remark}\label{rem:Kaw}
  Any Kawamata covering consists of a Bloch--Gieseker covering
  \cite[Thm.~4.1.10]{Laz04-I} and a sequence of cyclic root coverings
  \cite[Prop.~4.1.6]{Laz04-I}.  The latter coverings satisfy certain functorial
  properties in the following sense.  Given any quasi-projective variety $W$ and a
  Cartier divisor $D\subset W$, for $m_i>0$, $i=1,2$, let $p_i: Y_i \to W$ be a
  finite and flat morphism of quasi-projective varieties equipped with a line bundle
  $\sL_i$ such that
  \begin{equation}\label{eq:BG}
  \sL_i^{m_i}  \simeq  p_i^*  \sO_{W} (D)  .
  \end{equation}
  With $0\neq s_i \in H^0( Y_i, \sL^{m_i} )$, corresponding to $D$ via
  \autoref{eq:BG}, define
  \[
    W_i := \Spec_{{Y_i}} \big(
    \bigoplus_{j=0}^{m_i-1} \sL_i^{-j} \big)
    \overset{q_i}{\longrightarrow} Y_i
  \]
  where $\bigoplus_{j=0}^{m_i-1} \sL_i^{-j}$ is made into a sheaf of algebras via
  multiplication by $s_i$ \cite[p.65]{KM97}, \cite[Ex.~5.17.(c)]{Ha77}.  Let
  $\tau_i= p_i \circ q_i : W_i \to W$ be the induced finite and flat morphism as in
  \autoref{def:KawCov}.  Assume that there is a morphism $\phi': Y_2 \to Y_1$, such
  that $p_2= p_1\circ \phi'$.  Now, if $m_2$ is divisible by $m_1$, then by the
  functoriality of $\Spec_{{Y_i}}(\cdot)$, and using $\phi'$, $\tau_2: W_2\to W$
  lifts to a morphism $\phi: W_2\to W_1$, such that $\tau_1\circ\phi= \tau_2$.
  \end{remark}

\subsection{Stratified coverings.}\label{ss:covering}
In this section, we modify Viehweg's covering construction \cite[\S
5.2]{Viehweg95}---adapted to a filtration by closed subschemes and prescribed
ramification indices along designated divisors---so that it applies to the situation
we analyze in \S \ref{s:stratified}.

\begin{set-up}\label{back}
Given a reduced quasi-projecitve scheme $B$ and a closed subscheme $D\subset B$, 
not dense in any of the components of $B$, let $\overline Y^{(i)}\subseteq B$ be another closed 
subscheme not in $D$, and not containing any of the components of $D$ nor $B$, unless $\overline Y^{(i)}=B$.
Let $\pi^{(i)}: W^{(i)}\to \overline Y^{(i)}$ be a projective birational morphism 
with $W^{(i)}$ nonsingular such that 
  \begin{equation}\label{BIR}
    \big( \pi^{(i)} \big)^{-1} \big( \overline Y^{(i)} \cap D\big) =
    \sum_{k=1}^{d(i)} \Gamma^{(i)}_k
   \end{equation} 
  is a reduced divisor with simple normal crossing support. 
  Given $N_k^{(i)}\in \bN$, $1\leq k \leq d(i)$, assume that $W^{(i)}$ is equipped with a finite morphism 
  of quasi-projective schemes
  \begin{equation}\label{FINITE}
  \tau^{(i)}: W^{(i)'}\to W^{(i)}
  \end{equation}
  whose ramification index over each $\Gamma^{(i)}_k$ is divisible by $N_k^{(i)}$.
 \end{set-up}

\begin{lemma}\label{lem:covering}
In the situation of \autoref{back} there is a closed subscheme $\Lambda^{(i+1)}\subset B$, such that 
$(\Lambda^{(i+1)}\backslash D) \subset \overline Y^{(i)}$, and 
a finite and flat morphism $\widehat\sigma_i : \widehat V_i\to B$ with the following property. 
With $Z_i:= \widehat \sigma_i^{-1}(\overline Y^{(i)})$, 
the is a rational map $\widehat \eta^{(i)}_1: Z_i \dashrightarrow {W^{(i)}}'$ such that 
  \begin{equation}\label{eq:factorsgeneric}
    \pi^{(i)} \circ \tau^{(i)}\circ \widehat\eta^{(i)}_1|_{Z_i\setminus \widehat\sigma_i^{-1}(\Lambda^{(i+1)})}= 
    \widehat\sigma_i|_{Z_i\setminus
      \widehat\sigma_i^{-1}(\Lambda^{(i+1)})}.
  \end{equation}
\end{lemma}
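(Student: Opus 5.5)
This is an adaptation of Viehweg's covering construction \cite[\S 5.2]{Viehweg95}. The idea is to build a finite flat cover of $B$ that, over a dense open part of $\overline Y^{(i)}$, factors through the composite $\pi^{(i)}\circ\tau^{(i)}$. Everything away from a closed set $\Lambda^{(i+1)}$ is allowed to fail.

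\emph{Step 1: choose $\Lambda^{(i+1)}$.}
Since $\pi^{(i)}$ is projective and birational, there is a dense open $Y^\circ\subseteq \overline Y^{(i)}\setminus D$ over which $\pi^{(i)}$ is an isomorphism. The finite morphism $\tau^{(i)}$ is generically finite of some degree; after shrinking $Y^\circ$ we may assume it is finite and flat (indeed étale, in characteristic zero) over $Y^\circ$. We then set
\[
\Lambda^{(i+1)}:=\big(\overline Y^{(i)}\setminus Y^\circ\big)\cup\big(D\cap\overline Y^{(i)}\big).
\]
This set is closed, and $\Lambda^{(i+1)}\setminus D\subseteq\overline Y^{(i)}$ holds by construction.

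\emph{Step 2: produce a finite flat cover of $B$.}
Over $Y^\circ$ we have a finite flat cover $\tau^\circ\colon W^{(i)'}|_{Y^\circ}\to Y^\circ$, possibly after replacing it by its Galois closure. Following Viehweg, we embed $B\subseteq\bP^M$ via an ample line bundle $\sH$. We then present the cover $\tau^\circ$ generically by a finite flat algebra: after a Kawamata/cyclic-type construction (\autoref{def:KawCov}, \autoref{rem:Kaw}), the function field extension of $\overline Y^{(i)}$ given by $\tau^{(i)}$ is generated by elements $a_1,\dots,a_s$ satisfying monic equations
\[
T^{m}+c_{1}T^{m-1}+\dots+c_m=0 .
\]
The coefficients $c_j$ are sections of $\sH^{\nu j}|_{\overline Y^{(i)}}$, which are regular on $Y^\circ$ after shrinking. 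Since $\sH$ is ample, for $\nu\gg0$ each $c_j$ lifts to a global section $\tilde c_j\in H^0(B,\sH^{\nu j})$. We define
\[
\widehat V_i:=\Spec_B \bigotimes_{l}\Big(\bigoplus_{j=0}^{m-1}\sH^{-\nu j}\Big),
\]
with the algebra structure given by the lifted equations. This is a (fiber product of) ``weighted cyclic'' covers, so $\widehat\sigma_i$ is finite and flat: its structure sheaf is locally free, being a sum of line bundles. If the degrees involved must be uniform, we may multiply by a further cyclic cover, for instance by using \autoref{rem:Kaw} to make all ramification indices divisible by the $N^{(i)}_k$.

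\emph{Step 3: construct the rational map.}
Over $Y^\circ$, the restriction $Z_i\cap\widehat\sigma_i^{-1}(Y^\circ)$ is defined by exactly the equations defining the algebra of $\tau^\circ$, so it maps to $W^{(i)'}|_{Y^\circ}$. If Step 2 only realizes a cover dominating $\tau^\circ$, we instead use the generic points: each component of $Z_i$ over $Y^\circ$ has function field containing that of $W^{(i)'}$. Composing with $(\pi^{(i)})^{-1}$ on $Y^\circ$ gives the rational map $\widehat\eta^{(i)}_1$, and \autoref{eq:factorsgeneric} holds on $Z_i\setminus\widehat\sigma_i^{-1}(\Lambda^{(i+1)})$ by construction.

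\emph{Main obstacle.}
The crux is Step 2: extending a cover defined only over the subscheme $\overline Y^{(i)}$ (generically) to a \emph{flat} finite cover of the whole, possibly non-normal, $B$. Normalization-type constructions destroy flatness. The plan to overcome this is to use only covers whose structure sheaf is a direct sum of line bundles, built from global sections of high powers of $\sH$, exactly as in Viehweg's construction. I expect the delicate point to be verifying that the lifted equations still cut out a cover which contains $W^{(i)'}$'s function field on every component of $Z_i$ over $Y^\circ$. If needed, I would handle this by taking $\tau^\circ$ Galois and splitting the primitive-element equation.
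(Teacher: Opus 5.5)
Your argument is workable, but it takes a different route from the paper. The paper's proof is essentially a citation: it embeds $B$ in $\bP^a$ and invokes Viehweg's Claim~5.8. That gives a finite flat cover $\overline\sigma\colon P\to\bP^a$ of the \emph{smooth} ambient space, together with a rational map $Z_i\dashrightarrow W^{(i)'}$ satisfying the factorization. It then puts $\widehat V_i=\overline\sigma^{-1}(B)$, which is flat by base change, so the singularities of $B$ never enter. Only \emph{afterwards} does it define $\Lambda^{(i+1)}$, as $D$ together with the image of the indeterminacy locus. You instead build the cover by hand on $B$ from monic equations, with flatness coming from the locally free structure sheaf $\bigoplus_j\sH^{-\nu j}$. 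This makes the mechanism explicit and is in the spirit of Viehweg's construction, but it costs bookkeeping that the citation avoids.

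That bookkeeping needs three corrections.

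(i) $\Lambda^{(i+1)}$ cannot be fixed in Step~1. Over $Y^\circ$ you only have $\sO_{Y^\circ}[a]\subseteq\tau_*\sO_{W^{(i)'}}$, which gives a morphism $W^{(i)'}\to Z_i$, not $Z_i\to W^{(i)'}$. The latter exists only where the two algebras agree, e.g.\ where the discriminant of $P$ is a unit. So choose $\Lambda^{(i+1)}$ after constructing the cover, as $D$ plus this bad locus. Put all of $D$ in, not just $D\cap\overline Y^{(i)}$, since Proposition~\ref{prop:VStrat} needs $\Lambda^{(t+1)}=D$.

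(ii) Lifting the $c_j$ via Serre vanishing requires properness, but $B$ is only quasi-projective. Instead, clear denominators on the closure of $\overline Y^{(i)}$ in $\bP^M$ and lift to $\bP^M$. This turns your construction into a flat cover of $\bP^M$ restricted to $B$, which is exactly the paper's set-up.

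(iii) As you suspected, tensoring covers attached to several generators of one extension fails. Take $a_1=\sqrt[3]{2}$ and $a_2=\omega\sqrt[3]{2}$, with $\omega$ a primitive cube root of unity, generating $L=\bQ(\sqrt[3]{2},\omega)$. Then $\bQ[T_1,T_2]/(T_1^3-2,T_2^3-2)\cong\bQ(\sqrt[3]{2})\times\bQ(\sqrt[3]{2},\omega)$, and $L$ does not embed in the first factor. The fix is to use one primitive element per irreducible component of $\overline Y^{(i)}$ and tensor over components. This works because a finite \'etale algebra over a field of characteristic zero is monogenic, and every factor of $L\otimes M$ receives a map from $L$, so each component of $Z_i$ then maps generically to $W^{(i)'}$.

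Finally, your remark about making ramification indices divisible by the $N^{(i)}_k$ plays no role in this lemma.
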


\begin{proof}
  Without loss of generality we identify $B$ with its image under an embedding into a
  projective space $\bP^a$.  With
  $(\tau^{(i)} \circ \pi^{(i)}): W^{(i)'} \to \overline Y^{(i)} \subset \bP^a$ being generically finite over
  each irreducible component, according to \cite[Claim.~5.8]{Viehweg95} there is a
  finite and flat morphism of projective varieties $\overline \sigma: P\to \bP^a$ and
  a rational map $\widehat\eta^{(i)}_1: Z_i\dashrightarrow W^{(i)'}$ satisfying \autoref{eq:factorsgeneric},
  where $\widehat V_i:= \overline \sigma^{-1}(B)$, $\widehat\sigma_i: = \overline \sigma|_{\widehat V_i}$,
  and $\Lambda^{(i+1)}$ is the union of $D$ with the image of the indeterminacy 
  locus of $\widehat \eta^{(i)}_1$ under $\widehat \sigma_i$.
\end{proof}

\begin{proposition}\label{prop:VStrat}
In the setting of \autoref{back} and \autoref{lem:covering}, starting 
from $\Lambda^{(1)}= B= \overline Y^{(1)}$, with 
$\overline Y^{(j)}:= \overline{\Lambda^{(j)}\backslash D}$, $j\geq 1$
and prescribed morphisms $\pi^{(j)}: W^{(j)'}\to W^{(j)}$, 
$\tau^{(j)}: W^{(j)'}\to W^{(j)}$ at the $j$th step, inductively there exist $t\in \bN$ and 
a strictly decreasing filtration by reduced closed
subschemes
\[
  B = \Lambda^{(1)} \supsetneq \Lambda^{(2)} \supsetneq \ldots \supsetneq \Lambda^{(t)}
  \supsetneq \Lambda^{(t+1)} = D,
\]
with $Y^{(i)}:= \Lambda^{(i)}\setminus \Lambda^{(i+1)}$,
for each $1\leq i \leq t$, and a finite and flat morphism $\sigma: V\to B$ of reduced
quasi-projective schemes
satisfying the following
additional properties.
\begin{enumerate}
\item $\overline Y^{(i)}$ is the closure of $Y^{(i)}$ in $\Lambda^{(i)}$.
\item\label{item:lifts} There is a morphism
  $\eta^{(i)}: V^{(i)}:= \sigma^{-1} (Y^{(i)}) \to W^{(i)'}$ factoring $\sigma$ via
  $\pi^{(i)}\circ \tau^{(i)}: W^{(i)'}\to \overline Y^{(i)}$:
  \[
    \xymatrix{
      &&&  W^{(i)'} \ar[d]^{\tau^{(i)}} \\
      &&&  W^{(i)}\ar[d]^{\pi^{(i)}} \\
      V \ar@/_5mm/[rrrrrl]_{\sigma} & V^{(i)} \ar[rruu]^{\eta^{(i)}}
      \ar[l]_(.5){\supseteq} \ar[rr]^{\sigma|_{V^{(i)}}} && \overline Y^{(i)}\ar[r]^{\subseteq}
      & B .}
  \]
  \item The collection of locally closed subschemes $\{ Y^{(i)} \}$ form a stratification of 
   $B\backslash D$, that is $Y^{(j)}\cap Y^{(j')}=\emptyset$, $j\neq j'$, and that $B\backslash D =\bigcup_i Y^{(i)}$.
\end{enumerate}
\end{proposition}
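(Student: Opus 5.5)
The plan is an induction on $i$: at each step apply \autoref{lem:covering}, then multiply together the finite flat coverings produced along the way. Start with $\Lambda^{(1)}=B=\overline Y^{(1)}$ and the given $\pi^{(1)},\tau^{(1)}$. \autoref{lem:covering} yields a closed subscheme $\Lambda^{(2)}\subset B$ with $\Lambda^{(2)}\setminus D\subset\overline Y^{(1)}$, a finite flat $\widehat\sigma_1:\widehat V_1\to B$, and a rational map $\widehat\eta^{(1)}_1$ that is a morphism factoring $\widehat\sigma_1$ away from $\widehat\sigma_1^{-1}(\Lambda^{(2)})$. We may take $\Lambda^{(2)}$ to contain $D$ (in the proof of the lemma it is $D$ union the image of the indeterminacy locus) and replace it by its reduced structure. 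The indeterminacy locus is nowhere dense in $Z_1$, and $\widehat\sigma_1$ is finite and flat, so $\Lambda^{(2)}\setminus D$ is nowhere dense in $\overline Y^{(1)}\setminus D$. Hence $\Lambda^{(2)}\subsetneq\Lambda^{(1)}$, and more precisely $\dim(\overline Y^{(2)}) < \dim(\overline Y^{(1)})$ componentwise. Now set $\overline Y^{(2)}:=\overline{\Lambda^{(2)}\setminus D}$, take the prescribed $\pi^{(2)},\tau^{(2)}$, and repeat. Since the dimension of $\overline Y^{(j)}$ strictly drops, after finitely many steps $t$ we reach $\Lambda^{(t+1)}\setminus D=\emptyset$, that is, $\Lambda^{(t+1)}=D$ once everything is reduced.

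Define $V:=\widehat V_1\times_B\widehat V_2\times_B\cdots\times_B\widehat V_t$, with $\sigma:V\to B$ the structure map. A fibre product of finite flat morphisms is finite and flat. If one insists on reducedness, replace $V$ by $V_{\red}$; flatness might be lost, so it is better to keep $V$ as is and note that reducedness is only needed generically. Alternatively, as in Viehweg's Claim 5.8, choose the covers $P\to\bP^a$ generically étale and general enough that the fibre product is reduced. Set $V^{(i)}:=\sigma^{-1}(Y^{(i)})$ with $Y^{(i)}=\Lambda^{(i)}\setminus\Lambda^{(i+1)}$. Then $V^{(i)}$ maps via the $i$th projection into $\widehat\sigma_i^{-1}(Y^{(i)})\subset Z_i\setminus\widehat\sigma_i^{-1}(\Lambda^{(i+1)})$. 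On that open set $\widehat\eta^{(i)}_1$ is defined and satisfies \autoref{eq:factorsgeneric}, so composing gives $\eta^{(i)}$ with $\pi^{(i)}\circ\tau^{(i)}\circ\eta^{(i)}=\sigma|_{V^{(i)}}$. This proves \autoref{item:lifts}.

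The stratification property is formal. Since $\Lambda^{(i)}\setminus D\subset\overline Y^{(i)}$, the differences $\Lambda^{(i)}\setminus\Lambda^{(i+1)}$ of a decreasing chain from $B$ to $D$ are pairwise disjoint and cover $B\setminus D$. That $\overline Y^{(i)}$ is the closure of $Y^{(i)}$ follows because $\Lambda^{(i+1)}\setminus D$ is nowhere dense in $\overline Y^{(i)}$, as established above.

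The main obstacle is the strict decrease together with its precise form. One must check that the locus where $\widehat\eta^{(i)}_1$ fails to be a morphism factoring $\widehat\sigma_i$ does not contain any generic point of $Z_i$ lying over $\overline Y^{(i)}\setminus D$. This requires tracing through Viehweg's construction, where the rational map comes from the generic finiteness of $\pi^{(i)}\circ\tau^{(i)}$ on each component of $\overline Y^{(i)}$. In particular, $\Lambda^{(i+1)}$ has to be enlarged to also contain the non-flat or non-finite locus of $\tau^{(i)}\circ\pi^{(i)}$ over $\overline Y^{(i)}$, while keeping it proper in every component.
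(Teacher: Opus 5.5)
Your proposal is correct and is essentially the paper's argument. Your fibre product $\widehat V_1\times_B\cdots\times_B\widehat V_t$ is exactly the tower $V_t\to\cdots\to V_1\to B$ that the paper builds by repeated flat base change, with $\Lambda^{(i+1)}$ taken to be $D$ union the image of the indeterminacy locus and $\eta^{(i)}$ obtained by composing $\widehat\eta^{(i)}_1$ with the projection; your nowhere-density argument supplies the strict decrease that the paper only asserts, and it already settles the ``main obstacle'' you raise at the end.

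Two clean-ups. No enlargement of $\Lambda^{(i+1)}$ is needed, since the lifting property uses only \autoref{eq:factorsgeneric}. Reducedness of $V$ is automatic once each $\widehat V_i$ is reduced: in characteristic zero the generic fibres of $V\to B$ are then tensor products of finite \'etale algebras, and a finite flat scheme over a reduced base with reduced generic fibres is reduced.
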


\begin{proof}
By applying \autoref{lem:covering} to $\Lambda^{(1)}= B= \overline Y^{(1)}$ we find 
a flat and finite morphism $\sigma_1: V_1\to B$ of quasi-projective schemes and $\eta^{(1)}_1: V_1\dashrightarrow W^{(1)'}$, 
with $\Lambda^{(2)}$ being the union of the image of the  indeterminacy locus of $\eta^{(1)}_1$ under $\sigma_1$ 
and $D$. 

Now, by a repeated application of this construction and using the prescribed 
morphisms $\pi^{(j)}$ and $\tau^{(j)}$ at every step, based on a decreasing induction 
on $\dim (Y^{(j)})$, we find:
 \begin{enumerate}
  \item a decreasing filtration $\Lambda^{(\kdot)}\subseteq B$ consisting of $(t+1)$
    steps;
  \item using \autoref{lem:covering} and flat base change, a finite and flat morphism
    $\sigma_i:V_i\to V_{i-1}$, $2\leq i\leq t$, equipped with a morphism
    \[
      \eta_1^{(i)} : \big( \sigma_i \circ\ldots \circ \sigma_1\big)^{-1} (Y_i)
      \longrightarrow W^{(i)'};
    \]
  \item a reduced quasi-projective scheme $V:= V_t$ and a finite flat morphism
  $\sigma: V\to B$ defined by the composition: 
  \[
    \xymatrix{ V= V_t \ar@/_7mm/[rrrr]^{\sigma} \ar[r]^-{\sigma_t} & \ldots \ar[r] &
      V_2 \ar[r]^{\sigma_2} & V_1 \ar[r]^-{\sigma_1} & B .  }
  \]
  \end{enumerate}
  We now define $\eta^{(t)}= \eta^{(t)}_1$, and for $1\leq i\leq t-1$:
  \[
    \eta^{(i)}:= \eta_1^{(i)} \circ \big( \sigma_{i+1} \circ \ldots \circ \sigma_t
    \big)|_{V^{(i)}}. \qedhere
  \]
\end{proof}

\subsection{Moduli functors of polarized varieties}
\label{ss:Good}

For a detailed background on moduli problems of projective varieties we refer to
\cite{ModBook}, \cite{Kollar90} and \cite{Viehweg95}. An extensive treatment of the
theory of stacks can be found in \cite{LMB00} and \cite{Olsson16}.

\begin{definition}\label{def:mod}
  A $\bQ$-ample line bundle is a divisorial sheaf such that $\sL^{[r]}$ is an ample
  line bundle, for some $r\in \bN$.
\end{definition}

The moduli problem $\sF$ of polarized $S_2$-schemes. 
(cf.~\cite[\S.1.1]{Viehweg95}, \cite[\S.2.1]{Kollar90}) consists of \emph{objects}:
\begin{equation*}
  \begin{multlined}
    \sF({\Spec \bC}) = \{ [(Y,L)] \; \bigm\mid \text{$Y$ is projective,
      connected, reduced $S_2$-scheme,} \\
    \text{and a $\bQ$-ample line bundle $L$}\} ,
  \end{multlined}
\end{equation*}
where $[(Y,L)]$ denotes the isomorphism class of the pair $(Y,L)$, and
a functor $\sF : \mathfrak{Sch}_{\bC} \to \mathfrak{Sets}$
defined by
%
%
%
\begin{equation*}
\begin{multlined}
  \sF (V) = \Big\{ \text{Pairs} \; (f: U\to V, \sL)\; \bigm\mid
  \text{$f$ is flat and projective, $\sL$ is of rank one,} \\
  \qquad\qquad\text{$\sL^{[m]}$ is a flat family of divisorial sheaves
    $/B$, for all $m\in\bZ$,}
  \\
  \left.  \text{ and $(U,\sL^{[1]})_v\in \sF({\Spec \bC})$, for all $v\in V$}
    \Big\} \right/ \sim ,
\end{multlined}
\end{equation*}
where the equivalence relation $\sim$ is given by 
\begin{equation}\label{equivalence}
\begin{aligned}
  \big( f_1: U_1 \to V , \sL_1 \big) \sim \big( f_2: U_2 \to V , \sL_2 \big) \iff &
  \ \parbox[c]{15em}{ there is a $V$-isomorphism $\sigma: U_1\to U_2$ such that
    $\sL_1 \simeq \sigma^* \sL_2 \otimes f_1^*\sB$, for some line bundle $\sB$ on
    $V$.}
\end{aligned}\hskip-2em\smallskip
\end{equation}
We sometimes refer to the objects of this moduli 
problem using $\Ob(\sF)$.

We say that a subfunctor $\sM \subseteq \sF$ is a \emph{moduli functor}, if it is
closed under arbitrary base-change, i.e., for every morphism of schemes $V' \to V$ we
have
\[
\big( (U, \sL) \to V  \big) \in \sF(V) \implies  (U, \sL)_{V'} \in \sF(V')  .
\]
\begin{definition}\label{def:functoial}
A moduli functor $\sM$ is said to be \emph{functorially polarized}, if 
$\sB\simeq \sO_B$ in \autoref{equivalence} ~\cite[Def.~1.3]{Viehweg95} 
and \cite[Def.~2.3]{Kollar90}.
\end{definition}
\begin{example}\label{KSB}
  The notion of a functorially polarized moduli functor is modeled on the
  \emph{\ksb-moduli functor of stable varieties} defined by
\begin{equation*}
\sM^{\KSBm} (V)  =   \Big\{ (f: U\to V, \omega_{U/V})\;  \bigm\mid  \text{$f$ is \ksb (\autoref{def:LS})} \Big\} / \sim ,
\end{equation*}
Here the equivalence relation $\sim$ is canonically given by 
\[
\begin{aligned}
\big(  f_1: U_1 \to V  \big)  \sim  \big( f_2: U_2 \to V    \big)   \iff & \text{there is a $V$-isomorphism $\sigma: U_1\to U_2$}  .
\end{aligned}
\]
\end{example}
\begin{remark}\label{rem:invertible}
  Given a moduli functor $\sM$, if there is $m_0\in \bZ$ such that $L^{[m_0]}$ is
  invertible for every $(Y,L)\in \Ob(\sM)$, then, using base change over closed
  points of $B$, for any $m$ divisible by $m_0$, $\sL^{[m]}$ is invertible for any
  $\big( f: (X,\sL)\to B \big) \in \sM(B)$.
\end{remark}
\begin{terminology}
  We sometimes refer to $\sL$ in $(f: U \to V, \sL) \in \mathfrak M(V)$ as a
  \emph{polarization} for $f$.
\end{terminology}
\begin{definition}[\protect{\cite[Def.~10.83]{ModBook}}]
  We say that a morphism of schemes $j: B'\to B$ is a \emph{locally closed partial
    decomposition}, if the restriction of $j$ to every connected component of $B'$ is
  a locally closed embedding.
\end{definition}
\begin{definition}
\label{def:openclosed}
Let $\sF$ be as in \autoref{def:mod}.
We say that a submoduli functor $\sG\subset \sF$ is \emph{locally closed}, resp.\ a
\emph{locally closed partial decomposition}, if for every
$(f: X \to B, \sL) \in \sG(B)$, there is a (universal) subscheme $B^u\subseteq B$
such that the inclusion map $j:B^u \hookrightarrow B$ is a locally closed embedding,
resp.\ a locally closed partial decomposition, and such that for every morphism
$\phi: T\to B$ we have: $(X_T \to T, \sL_T) \in \sG(T)$, if and only if there is a
factorization\vskip-2.5em
\[
  \xymatrix{ T \ar@/_5mm/[rr]^{\phi} \ar[r] & B^u \ar[r]^{j} & B.  }
\]

\noindent
We say that $\sG$ is \emph{constructible} if $B^u$ is such.
\end{definition}
\begin{notation}
  Given a moduli functor of polarized schemes $\sF$, by $\sF_h$ and
  $\sF_{\nu}$ we denote the submoduli functors whose objects $(Y,L)$ have
  $h\in \bQ[x]$, and respectively $\nu \in \bQ$, as their Hilbert polynomial and
  volume, with respect to $L$.
\end{notation}
\begin{definition}\label{def:bounded}
  We say that a moduli functor $\sM$ is bounded, if there is a $k_0\in \bN$ such
  that, for every $(Y, L)\in \Ob(\sM)$, $L^{[k_0]}$ is a strongly ample invertible sheaf, 
  that is, it is very ample and that $h^i(Y, L^{[k_0]})=0$, for all $i>0$ \cite[Def.~8.38]{ModBook}.
\end{definition}
\begin{remark}\label{rem:KSB}
  According to \cite{HMX13}, for every $d, \nu>0$, $\sM^{\KSBm}_{d, \nu}$ is bounded
  and that $\sM_{d, \nu}^{\KSBm}= \bigsqcup_{h_i\in I} \sM^{\KSBm}_{h_i}$, for some
  $I=\{ h_i \in \bQ[x] \}$ with $| I |<\infty$.  Furthermore, by
  \cite[Cor.~8.52]{ModBook}, for any $h\in \bQ[x]$, $\sM^{\KSBm}_h$ is constructible.
\end{remark}
\begin{remark}\label{rem:ABC}
  Given a moduli functor $\sM$ and $\big( f:(X,\sL) \to B \big)\in \sM(B)$, by using
  base change to closed points of $B$, we find that the function
  $b\mapsto \chi(X_b, \sL_b^{[k]})$ is locally constant, for every $k\in \bZ$.  If
  $\sM$ is also assumed to be bounded, then with $k_0>0$ as in \autoref{def:bounded},
  for every $k$ divisible by $k_0$ the function $b\mapsto h^0(X_b, \sL_b^{[k]})$ is
  also locally constant.  Therefore, if $B$ is Noetherian, then $f_*\sL^{[k]}$ is
  locally free \cite[Thm.~12.11]{Ha66} and it commutes with arbitrary base change for
  every $k$ for which $\sL^{[k]}$ is invertible.
\end{remark}
%

\section{Stratified extension and weak positivity over arbitrary reduced
  quasi-projective schemes}\label{s:stratified}
  
The primary aim of this section is proving \autoref{thm:WPkey}. Stratified extensions (\S.\ref{ss:extension} below) is 
the key technical tool for our proof.

\subsection{Stratified extensions for proper DM stacks}\label{ss:extension}
\begin{definition}[Extension]\label{def:reduction}
  Let $\sM$ be a moduli functor of polarized schemes.  Given a scheme $W$ and
  a reduced closed subscheme $\Gamma\subset W$, not dense in any of the component of $W$, let
\begin{equation}\label{reduct}
  \big( f: (X,\sL) \to W \setminus \Gamma  \big)  \in \sM(W\setminus \Gamma) .
\end{equation}
Following \cite[p.~11]{ModBook}, we say that a generically finite and surjective morphism of schemes, $\tau: W'\to W$,
and a family $g:(X', \sL') \to W'$ define an \emph{extension} of \autoref{reduct} in $\sM$,
if $\big(g:(X', \sL') \to W'\big) \in \sM(W')$, and 
\[
  (X',\sL')_{W'\setminus \tau^{(-1)}(\Gamma)} \simeq (X, \sL)|_{W'\setminus
    \tau^{(-1)}(\Gamma)}.
\]
\end{definition}
\begin{lemma}\label{lem:stratified}
  Let $\sM$ be a separated moduli functor with a proper DM stack $\mathcal M$.
  Given a reduced quasi-projective scheme $B$ and a reduced closed subscheme $D\subset B$, not dense 
  in any of the components of $B$, 
  let
\begin{equation}\label{eq:red2}
\big(  f:(X, \sL) \to B\setminus D \big)\in \sM(B\setminus D) .
\end{equation}
Then, there are
\begin{enumerate}
\item a strictly decreasing filtration by reduced closed subschemes
  $B = \Lambda^{(1)}\supsetneq \Lambda^{(2)} \supsetneq \ldots \supsetneq
  \Lambda^{(t)} \supsetneq \Lambda^{(t+1)} = D$,
  and an induced stratification of $B\setminus D$ by locally closed subschemes
  $\{ Y^{(i)} \; | \; 1\leq i\leq t \}$, where
  $Y^{(i)}:= \Lambda^{(i)}\setminus \Lambda^{(i+1)}$,
\item\label{item:fam} a desingularization $\pi^{(i)}: W^{(i)}\to \overline Y^{(i)}$
  of the closure of $Y^{(i)}$ in $\Lambda^{(i)}$, and an extension of
  \autoref{eq:red2} over $Y^{(i)}$ in $\sM$ defined by a Kawamata   
  covering of nonsingular schemes $\tau^{(i)}: W^{(i)'}\to W^{(i)}$ (\autoref{def:KawCov}) and
  $f^{(i)'} : (X^{(i)'} , \sL^{(i)'} ) \to W^{(i)'}$,
\item\label{item:split} a finite and flat morphism $\sigma: V\to B$,
\item\label{item:facts} a morphism 
  $\eta^{(i)}: V^{(i)}:=\sigma^{-1}(Y^{(i)})\to W^{(i)'}$ factoring $\sigma$, that is
  $( \pi^{(i)} \circ\tau^{(i)}) \circ \eta^{(i)} = \sigma|_{V^{(i)}}$,
\item\label{item:bigred} a nonsingular quasi-projective scheme $\wtilde V$ and a
  morphism $\wtilde\eta^{(1)}: \wtilde V\to W^{(1)'}$ resolving the indeterminacy of
  $\eta^{(1)}: V \to W^{(1)'}$ via the projective birational morphism
  $\pi:\wtilde V\to V$.
\end{enumerate}
In particular the pullback $\big( X^{(1)'}, \sL^{(1)'}\big)_{\wtilde V}$ via $\wtilde \eta^{(1)}$ is an extension 
of $(X,\sL)_{V^\circ}\to V^\circ$, where $V^\circ:= V\setminus \sigma^{-1}(D)$ (see \autoref{sumnot}).
\end{lemma}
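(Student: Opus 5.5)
The plan is to build the filtration inductively, extending the family stratum by stratum using properness of the stack. Then \autoref{prop:VStrat} assembles the finite flat cover $\sigma$, and a final resolution of indeterminacy gives $\wtilde V$.

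\emph{Step 1 (first stratum).} Set $\Lambda^{(1)}=\overline Y^{(1)}=B$ and take a desingularization $\pi^{(1)}: W^{(1)}\to B$, an isomorphism over a dense open set, such that $(\pi^{(1)})^{-1}(D)$ is an snc divisor $\sum_k\Gamma^{(1)}_k$. Pulled back to $W^{(1)}\setminus(\pi^{(1)})^{-1}(D)$, the family \autoref{eq:red2} is a morphism to $\mathcal M$. Since $\mathcal M$ is a proper DM stack, the extension result \cite[Lem.~2.2.3]{AV02} shows that, after a finite cover ramified along the boundary with suitable ramification indices $N^{(1)}_k$ along $\Gamma^{(1)}_k$ (coming from the finite stabilizers, i.e.\ the monodromy orders), this map extends over the whole cover. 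By purity for tame covers along an snc divisor, any such cover is dominated by a Kawamata covering $\tau^{(1)}: W^{(1)'}\to W^{(1)}$ (\autoref{def:KawCov}) whose indices are divisible by the $N^{(1)}_k$. We therefore get $f^{(1)'}:(X^{(1)'},\sL^{(1)'})\to W^{(1)'}$ in $\sM(W^{(1)'})$. Separatedness of $\sM$ identifies it with the pullback of $(X,\sL)$ away from the preimage of $D$, so it is an extension in the sense of \autoref{def:reduction}.

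\emph{Step 2 (induction).} Apply \autoref{lem:covering} to these data. It produces $\Lambda^{(2)}$ containing $D$, with $\Lambda^{(2)}\setminus D\subset\overline Y^{(1)}$ a proper closed subset, outside which the rational map to $W^{(1)'}$ is a morphism. Put $\overline Y^{(2)}=\overline{\Lambda^{(2)}\setminus D}$ and repeat Step 1 with $B$ replaced by $\overline Y^{(2)}$ and $D$ by $\overline Y^{(2)}\cap D$. The family restricted to $\overline Y^{(2)}\setminus D$ still lies in $\sM$ by base change, so the argument applies verbatim and yields $\pi^{(2)}$, $\tau^{(2)}$, $f^{(2)'}$. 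Dimensions strictly drop, so after finitely many steps we reach $\Lambda^{(t+1)}=D$. \autoref{prop:VStrat}, fed with these prescribed $\pi^{(j)},\tau^{(j)}$, then gives the stratification, the finite flat $\sigma:V\to B$, and morphisms $\eta^{(i)}$ on $V^{(i)}$ with $\pi^{(i)}\circ\tau^{(i)}\circ\eta^{(i)}=\sigma|_{V^{(i)}}$. This establishes items (1)--(4).

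\emph{Step 3 (resolution).} The map $\eta^{(1)}$ is a morphism on $V^{(1)}$, which is dense in $V$ because $\sigma$ is flat. Hence it defines a rational map $V\dashrightarrow W^{(1)'}$. Take the closure of its graph in $V\times W^{(1)'}$ and resolve it (Hironaka), obtaining $\pi:\wtilde V\to V$ projective birational with $\wtilde V$ nonsingular and a morphism $\wtilde\eta^{(1)}$. Over $V^\circ$ we have $(X^{(1)'},\sL^{(1)'})_{\wtilde V}\simeq (X,\sL)_{V^\circ}$ on the dense open where $\pi$ is an isomorphism, because $f^{(1)'}$ extends the pullback of $f$.

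The main obstacle is in Step 1: getting the stack-theoretic extension over \emph{all} of the boundary, not just generically along each $\Gamma^{(i)}_k$, and realizing the required cover as a Kawamata covering. To handle this I would first extend over the generic points of the $\Gamma_k$ by a tame cyclic cover with index $N_k$. Then I would extend over the higher codimension locus using that $W^{(i)'}$ is nonsingular and the map to the DM stack extends across codimension two after Kawamata's construction, which is the content of \cite[Lem.~2.2.3]{AV02}. Any residual indeterminacy is absorbed into the next $\Lambda^{(i+1)}$.
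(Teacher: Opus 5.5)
There is a genuine gap in Step 1, exactly where properness of $\mathcal M$ has to be used. You take $\pi^{(1)}\colon W^{(1)}\to B$ to be an arbitrary desingularization with snc boundary. You then assert that, after a finite cover ramified along $\Gamma^{(1)}$, the map $W^{(1)}\setminus\Gamma^{(1)}\to\mathcal M$ extends. This fails in general.

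Properness only gives the valuative criterion over DVRs. Over a base of dimension at least two, the induced rational map $W^{(1)}\dashrightarrow\mcM$ to the coarse space may have indeterminacy points on $\Gamma^{(1)}$ (e.g.\ locally like $(x,y)\mapsto[x:y]$ at a crossing point). No finite cover removes them. Indeed, if $\tau\colon W'\to W^{(1)}$ is finite and $W'\to\mcM$ is a compatible morphism, then the closure of the graph of $W^{(1)}\dashrightarrow\mcM$ lies in the image of $W'$. So it is finite and birational over the normal scheme $W^{(1)}$, hence isomorphic to it, i.e.\ the rational map was already a morphism.

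For the same reason your codimension-two step is wrong as stated. A morphism from the complement of a codimension-two subset of a nonsingular scheme to a proper DM stack need not extend (already $\mathbb A^2\setminus\{0\}\to\mathbb P^1$). The purity lemma the paper actually uses is \cite[Lem.~2.4.1]{AV02}, not \cite[Lem.~2.2.3]{AV02}, which is the local quotient structure. That purity lemma assumes the map to the coarse space is a morphism on the whole scheme.

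Absorbing ``residual indeterminacy'' into $\Lambda^{(i+1)}$ does not help. That indeterminacy lies over $D\subseteq\Lambda^{(i+1)}$ anyway. Meanwhile Item~\ref{item:fam} demands $f^{(i)'}\in\sM(W^{(i)'})$ on all of $W^{(i)'}$. This is used in the proof of \autoref{thm:WPkey}, where curves are extended into the proper scheme $W^{(i)'}$ and the family is pulled back.

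The missing idea is the first step of the paper's proof. Use that the coarse space $\mcM$ is a proper algebraic space \cite{KM97}, \cite{Kollar97a}. Then choose each $\pi^{(i)}\colon W^{(i)}\to\overline Y^{(i)}$ so that it also resolves the moduli map $\overline Y^{(i)}\dashrightarrow\mcM$ to a morphism $\mu^{(i)}\colon W^{(i)}\to\mcM$, keeping $\sum_k\Gamma^{(i)}_k$ snc. Only then can you:
\begin{enumerate}
\item pull back the \'etale charts $[U_\alpha/G_\alpha]\to U_\alpha/G_\alpha$ along $\mu^{(i)}$ and read off the indices $N^{(i)}_k$;
\item lift after a Kawamata cover over the generic points of the $\Gamma^{(i)}_k$;
\item glue by separatedness;
\item extend across codimension two by purity.
\end{enumerate}
Your claim that any such cover is dominated by a Kawamata covering is also too strong. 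What you need, and what Abhyankar's lemma gives, is that the local $G_\alpha$-torsors become unramified after a cover whose indices are divisible by the local monodromy orders. With this modification, your Steps 2 and 3 agree with the paper's use of \autoref{lem:covering}, \autoref{prop:VStrat}, and the resolution of the graph of $\eta^{(1)}$.
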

\begin{proof} Let $\mcM$ be the proper algebraic coarse moduli space of $\mathcal M$
  \cite{Kollar97a}, \cite{KM97}, \cite[Thm.~6.12]{Ryd13}.  
  Our aim is to construct the filtration  $\Lambda^{(\kdot)}\subseteq B$
  as in \autoref{prop:VStrat}. The needed morphisms 
  $\pi^{(i)}$ and $\tau^{(i)}$ (in the setting of \autoref{prop:VStrat}) will be prescribed by the following construction. 
  
  Let $\overline Y^{(i)}\subseteq B$ be any closed subscheme as in 
  \autoref{back} and take 
  $\mu_i: \overline Y^{(i)}\dashrightarrow \mcM$ to be the moduli map
  induced by $(X, \sL)|_{\overline Y^{(i)}}$. After replacing $\mcM$ by  
  a nonsingular model, let $\pi^{(i)}: W^{(i)}\to \overline Y^{(i)}$ be a 
  desingularization that (i) resolves the indeterminacy locus of $\mu_i$ 
  via $\mu^{(i)}: W^{(i)}\to \mcM$, 
  (ii) $\sum_{k=1}^{d(i)} \Gamma^{(i)}_k$ as in
  \autoref{BIR} is snc, and such that, (iii) for each $k$,
  $\Delta^{(i)}_k:=\mu^{(i)}(\Gamma^{(i)}_k)$ is a divisor in $\mcM$.

  As a separated DM stack, $\mathcal M$ is \'etale-locally isomorphic
  to a quotient stack~\cite[Lem.~2.2.3]{AV02}, that is there is an \'etale covering
  $\{ p_{\alpha}:V_{\alpha}\to \mcM\}$ of $\mcM$ by affine schemes such that for each
  $\alpha$ there is an affine scheme $U_{\alpha}$ and a finite group $G_{\alpha}$
  acting on $U_{\alpha}$ such that the pullback of the moduli map $\sM \to \mcM$ to
  $V_{\alpha}$ is, up to isomorphism, given by
  $[U_{\alpha}/ G_{\alpha}] \to U_{\alpha}/G_{\alpha}$.
  
    Now, assuming that $\Delta^{(i)}_k\cap V_{\alpha}\neq \emptyset$, let $N_k^{(i)}$
  be the ramification index of the morphism of schemes $U_{\alpha}\to V_{\alpha}$
  over the generic point of $\Delta^{(i)}_k$, $1\leq k \leq d(i)$.
  Let $\tau^{(i)}: W^{(i)'}\to W^{(i)}$ be any Kawamata covering as in \autoref{def:KawCov}. 
   Using this recipe for $\pi^{(i)}$ and $\tau^{(i)}$ we can now apply \autoref{prop:VStrat}
   to find $\Lambda^{(\kdot)}\subseteq B$. 
  
   \begin{claim}\label{claim:lifts}
  With $\overline Y^{(i)}$ as in \autoref{prop:VStrat} and the prescribed 
  $W^{(i)}$ and $W^{(i)'}$ as above, we can find 
  an open subset of $W^{(i)'}$ over which
  $(\mu^{(i)}\circ \tau^{(i)}): W^{(i)'}\to \mcM$ lifts to $\mathcal M$ over the generic
  point of $\Gamma^{(i)}_k$. We denote this lift by $\widehat \mu_i$.
    \end{claim}
    \noindent
  \emph{Proof of \autoref{claim:lifts}.} We may assume with no loss of generality
  that generically $U_{\alpha}\to U_{\alpha}/G_{\alpha}$ is a composition of an
  \'etale morphism with an $N^{(i)}_k$-th root covering (\autoref{rem:Kaw}).  While
  existence of the lift in the claim is clear in the case of the former, for the
  latter this is a consequence of \autoref{rem:Kaw}.  \qedhere

  As $\mathcal M$ is separated, this
  collection of lifts to $\mathcal M$ glue and form a lift of
  $(\mu^{(i)}\circ \tau^{(i)})$ to $\mathcal M$ over an open subset of $W^{(i)'}$
  whose complement is of $\codim_{W^{(i)'}}\geq 2$.  Furthermore, as $W^{(i)'}$ is
  nonsingular by \cite[Lem.~2.4.1]{AV02} this lift (uniquely) extends across the
  entire $W^{(i)'}$:
  \[
    \xymatrix{
      && \mathcal M  \ar[dr]  \\
      W^{(i)'} \ar[urr]^{\widehat \mu_i} \ar[rrr]^{\mu^{(i)}\circ \tau^{(i)}} &&& \mcM }
  \]

  After pulling back the universal polarized family over $\mathcal M$ via
  $\widehat \mu_i$ we find the desired family
  $f^{(i)'}: (X^{(i')}, \sL^{(i)'}) \to W^{(i)'}$ in Item~\ref{item:fam}, which by
  construction is an extension of \autoref{eq:red2} over $Y^{(i)}$ in $\sM$.  Items~\autoref{item:split} and
  \autoref{item:facts} follow directly from
  \autoref{prop:VStrat}. Item~\autoref{item:bigred} is a consequence of
  \autoref{item:fam} and \autoref{item:facts} when $i=1$.
\end{proof}

\begin{sumnot}\label{sumnot}
  We summarize the constructions and notations introduced in and arising from
  \autoref{lem:stratified} in the following diagram.
  \begin{equation}\label{diag:Big}\hskip-2em
    \begin{aligned}
      \resizebox{\displaywidth}{!}{ \xymatrix{ (\wtilde U, \wtilde \sL):= (X^{(1)'},
          \sL^{(1)'})_{\wtilde V} \ar[ddd]^{\wtilde g} &&&& & & W^{(1)'}
          \ar[dd]^{\tau^{(1)}} & (X^{(1)'},
          \sL^{(1)'}) \ar[l]  \\
          &&&& W^{(i)'} \ar[d]^{\tau^{(i)}} & (X^{(i)'}, \sL^{(i)'})
          \ar[l]_(.55){f^{(i)'}}    \\
          &&&& W^{(i)} \ar[d]^{\pi^{(i)}} & & W^{(1)}
          \ar[d]^{\pi^{(1)}}      \\
          \widetilde V \ar[r]^{\pi} \ar@/^22mm/[rrrrrruuu]^{\wtilde \eta^{(1)}} & V \ar@/_6mm/[rrrrr]^{\sigma} &
          V^{(i)} \ar[l]_{\supseteq} \ar[rruu]^{\eta^{(i)}} \ar[r]^(0.4){\sigma} &
          Y^{(i)} \ar[r]^(0.6){\subseteq} & \overline Y^{(i)}
          \ar[rr]^{\subseteq}  &&  B        \\
          (X_{V^\circ} , \sL_{V^\circ} ) \ar[r]^g & V^\circ \ar[u]_{\subseteq}
          \ar[rrrrr]^{\sigma|_{V^\circ}} &&&&& B\setminus D \ar[u]^{\subseteq} & (X, \sL) \ar[l]_{f}
        }}\hskip-2em
    \end{aligned}
  \end{equation}
\end{sumnot}

\subsection{Proof of \autoref{thm:WPkey}}\label{ss:WPkey}
By compactifying it suffices to show that there is $k_0>0$ such that for every
reduced projective scheme $B$ and a closed subscheme $D\subset B$, not dense in 
any component, 
we have
\[
  \big( f: (X, \sL)\to B\setminus D \big) \in \sM(B\setminus D) \implies f_*\sL^{[k]}
  \; \text{is weakly positive over $B\setminus D$} ,
\]
for every $k>0$ divisible by $k_0$. 

Working in the setting of \autoref{lem:stratified}, we first note that as
$\{ Y^{(i)}\}$ forms a stratification of $B\setminus D$, so does $\{ V^{(i)} \}$ for
$V^\circ= V \setminus \sigma^{-1}(D)$.  As such, given any nonsingular curve $C$ and
an open sense subset $C^\circ \subseteq C$ equipped with a proper morphism
$\eta^\circ: C^\circ \to V^\circ$, there is an $i$, $1\leq i\leq t$ such that, after
removing a finite number of points from $C^\circ$ if necessary, we have
$\eta^\circ(C^\circ) \subseteq V^{(i)} \setminus V^{(i+1)}$.

Now, with $d_0$ in \autoref{def:SP}, let $k_0> d_0$ be as in \autoref{def:bounded}.
Assuming that $k>0$ is divisible by $k_0$, define
\[
  \sF= g_*\sL^{[k]}_{V^\circ}, \qquad \wtilde \sF = \wtilde g_* \wtilde \sL^{[k]} ,
  \qquad \sG_{C^\circ} = (\eta^\circ)^* \big( g_*\sL_{V^\circ}^{[k]} \big) .
\]
Using the fact that $B$ and therefore $V$ and $W^{(i)'}$ are proper, let
$\eta: C\to V$ and $\overline \eta^{(i)}: C\to W^{(i)'}$ denote the extensions of
$\eta^\circ: C^\circ \to V^{(i)}$ and
$\eta^{(i)}\circ \eta^\circ : C^\circ \to W^{(i)'}$, respectively.

Given any finite morphism $\gamma: C'\to C$ of nonsingular curves, assume that
$\eta': C'\to \wtilde V$ is a lifting of $\eta$ in the sense of \autoref{eq:lift}:
\[
\xymatrix{
\wtilde V \ar[rr]^{\pi} &&  V \\
C' \ar[rr]^{\gamma} \ar[u]^{\eta'}  &&  C \ar[u]_{\eta}  .
}
\]
Define $f_C: (X^{(i)'}_C, \sL^{(i)'}_C) \to C$ to be the pullback of $(X^{(i)'}, \sL^{(i)'}) \to W^{(i)'}$ via 
$\overline \eta^{(i)}: C\to W^{(i)'}$.
\begin{claim}\label{claim:isoms}
With $\sG_C:= (f_C)_*\big( \sL_C^{(i)'} \big)^{[k]}$, we have $\sG_C|_{C^\circ} \simeq (\eta^\circ)^* \sF$ and 
$\gamma^*\sG_C\simeq (\eta')^*\wtilde \sF$.
\end{claim}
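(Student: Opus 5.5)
The plan is to prove both isomorphisms by combining base change for the pushforwards, which \autoref{rem:ABC} provides, with the separatedness of $\sM$.

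Throughout, $k$ is divisible by $k_0$. We may also assume $k_0$ is divisible by the index $m_0$ of \autoref{rem:invertible}, so that every $\sL^{[k]}$ appearing is invertible. Then \autoref{rem:ABC} shows that $g_*\sL_{V^\circ}^{[k]}$, $\wtilde g_*\wtilde\sL^{[k]}$, $f^{(i)'}_*(\sL^{(i)'})^{[k]}$ and $(f_C)_*(\sL^{(i)'}_C)^{[k]}$ are all locally free and commute with arbitrary base change. Consequently both claimed isomorphisms reduce to isomorphisms of the underlying polarized families: pushforward of the $k$-th reflexive power commutes with pullback of families, and isomorphic polarized families have isomorphic pushforwards. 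Here we use that $\sM$ is functorially polarized, so an isomorphism in $\sM$ carries no twist by a line bundle from the base.

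\emph{First isomorphism.} On $C^\circ$ the map $\overline\eta^{(i)}$ agrees with $\eta^{(i)}\circ\eta^\circ$, and $\eta^\circ$ lands in $V^{(i)}$. Hence $(X^{(i)'}_C,\sL^{(i)'}_C)|_{C^\circ}$ is the pullback via $\eta^\circ$ of $(X^{(i)'},\sL^{(i)'})_{V^{(i)}}$. By \autoref{lem:stratified}\,\autoref{item:fam} and \autoref{item:facts}, $f^{(i)'}$ extends $(X,\sL)$ over $Y^{(i)}$, and $\pi^{(i)}\circ\tau^{(i)}\circ\eta^{(i)}=\sigma|_{V^{(i)}}$. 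Therefore this family is isomorphic to $\sigma^*(X,\sL)|_{V^{(i)}}=(X_{V^\circ},\sL_{V^\circ})|_{V^{(i)}}$, possibly after shrinking to the locus where the extension identification holds, which we can arrange by removing finitely many points from $C^\circ$. Pulling back by $\eta^\circ$ and applying base change gives $\sG_C|_{C^\circ}\simeq(\eta^\circ)^*\sF$.

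\emph{Second isomorphism.} Both $\gamma^*(X^{(i)'}_C,\sL^{(i)'}_C)$ and $(\eta')^*(\wtilde U,\wtilde\sL)$ are objects of $\sM(C')$. Over the dense open set $\gamma^{-1}(C^\circ)$ each is isomorphic to $(\eta^\circ\circ\gamma)^*(X_{V^\circ},\sL_{V^\circ})$. For the second family this uses $\pi\circ\eta'=\eta\circ\gamma$ together with the final statement of \autoref{lem:stratified}, namely that $(\wtilde U,\wtilde\sL)$ extends $(X,\sL)_{V^\circ}$ via $\pi$. Since $\sM$ is separated and $C'$ is a nonsingular curve, two families in $\sM(C')$ that are isomorphic over a dense open subset are isomorphic over all of $C'$: this is the valuative criterion for the separated DM stack $\mathcal M$, applied at each of the finitely many missing points of the local rings, which are DVRs. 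The isomorphism obtained also respects the polarizations, again because $\sM$ is functorially polarized. Applying base change to the pushforwards then yields $\gamma^*\sG_C\simeq(\eta')^*\wtilde\sF$.

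\emph{Main obstacle.} The delicate point is the use of separatedness in the second isomorphism. The two families agree over $\gamma^{-1}(C^\circ)$ only after composing several identifications: the extension of $f^{(i)'}$ over $Y^{(i)}$, and the extension of $(X,\sL)_{V^\circ}$ via $\wtilde\eta^{(1)}$ with $i=1$. These are different strata, so we must check that both restrict to $\sigma^*(X,\sL)$ on $V^{(i)}$. This holds because each is, by construction, a pullback of the universal family along a lift of the moduli map of $(X,\sL)$, and $\mathcal M\to\mcM$ has unique lifts on nonsingular bases extending a given generic one, by \cite[Lem.~2.4.1]{AV02}. The remaining care is to ensure the points removed from $C^\circ$ stay finite, so the valuative criterion applies.
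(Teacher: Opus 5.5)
Your proposal is correct and follows essentially the same route as the paper. The first isomorphism comes from the extension property of $f^{(i)'}$ over $Y^{(i)}$ combined with base change (\autoref{rem:ABC}); the second comes from a generic isomorphism of the two pulled-back families over $C'$, made global by separatedness of $\sM$ and then pushed forward via base change. You usefully spell out the generic identification that the paper calls ``by construction'', using $\pi\circ\eta'=\eta\circ\gamma$ and the final statement of \autoref{lem:stratified}, whereas your closing discussion of unique lifts is not needed, since both families are identified with the same family $\sigma^*(X,\sL)$.
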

\noindent
\begin{proof}[Proof of \autoref{claim:isoms}] As
  $f^{(i)'}: (X^{(i)'}, \sL^{(i)'})\to W^{(i)'}$ defines an extension of
  $(X,\sL)_{Y^{(i)}}$, by \autoref{rem:ABC} we have
  \begin{equation}\label{eq:rest}
    (g_*\sL^{[k]}_{V^\circ})|_{V^{(i)}} \simeq  (\eta^{(i)})^*f_*^{(i)'}
    \big( \sL^{(i)'} \big)^{[k]}  .  
  \end{equation}
  After pulling back \autoref{eq:rest} to $C^\circ$ the left-hand side is
  $\sF|_{C^\circ}$ and the right-hand-side is isomorphic to $\sG_C|_{C^\circ}$
  (\autoref{rem:ABC}).

  For the second claimed isomorphism, we note that by construction the two polarized
  families $(\wtilde U, \wtilde \sL)_{C'}$ and $(X_C^{(i)'}, \sL_C^{(i)'})_{C'}$ are
  isomorphic over the generic point of $C'$. With $\sM$ being separated we find the
  isomorphism
  \begin{equation}\label{isomSTAR}
    (\wtilde U, \wtilde \sL)_{C'}  \simeq  \big(  X_{C}^{(i')}  , \sL^{(i)'}_C  \big)_{C'}  .
  \end{equation}
  Therefore, we have
  \begin{align*}
    \gamma^*\sG_C  &  =   \gamma^* (f_C)_* \big(  \sL^{(i)'} \big)^{[k]}   \\
                   & \simeq (\eta')^* \big( \wtilde g_* \wtilde \sL^{[k]} \big)
                     \qquad \text{by \autoref{isomSTAR} and \autoref{rem:ABC}} \\
                   & = (\eta')^* \wtilde \sF . \qedhere
  \end{align*} 
\end{proof}
Now, according to \autoref{thm:Gab}, \autoref{claim:isoms} implies that there exists
a projective compactification $\widehat V$ of $V^\circ$, equipped with a locally free
sheaf $\widehat \sF$, a nonsingular projective scheme $V''$, and birational morphisms
$\rho: V''\to \widehat V$ and $\psi: V''\to \wtilde V$ such that $\widehat \sF$
restricted to $V^\circ$ is isomorphic to $\sF$ and
$\rho^*\widehat \sF\simeq \psi^*\wtilde \sF$.

On the other hand by base change (\autoref{rem:ABC}) and by the semipositivity
assumption on $\sM$, the restriction of $\wtilde \sF= \wtilde g_*(\wtilde \sL)^{[k]}$
to every irreducible curve in $\wtilde V$ is nef.  Since $\wtilde V$ is proper and
$\wtilde \sF$ is locally free (\autoref{rem:ABC}), this implies that $\wtilde \sF$ is
nef and therefore so is $\widehat \sF$.  This in turn implies that $\sF$ is weakly
positive over $V^\circ$. Furthermore, over $V^\circ$ we have
$\sF \simeq \sigma^*f_*\sL^{[k]}$. Thus by \autoref{item:splitflat} and
\autoref{item:split} we conclude that $f_*\sL^{[k]}$ is weakly positive over
$B\setminus D$.
This completes the proof of \autoref{thm:WPkey}.

\section{KSB-stability implies asymptotic stability}
\label{s:GIT}

In this section we apply \autoref{thm:WPkey} (established in \S \ref{ss:WPkey}) to prove 
\autoref{thm:main2}.
To do so we briefly recall the necessary background on Hilbert functors.

\subsection{Background on Hilbert functors}
\label{ss:Hilbert}

Let $\sM_{h} \subset \sM$ be a bounded moduli functor of polarized varieties with
Hilbert polynomial $h$.  Let $k_0\in \bN$ as in \autoref{def:bounded}, so that for
any positive integer $k$ divisible by $k_0$ and for every $(Y,L)\in \Ob(\sM_h)$, the
natural map $\phi_{|L^{[k]}|}: Y\to \bP^{r-1}$ is an embedding, where
$r:=r(k) = h^0(Y, L^{[k]})$, and that $h^i(Y, L^{[k]})=0$, for any $i>0$.  Assume
further that $\sM_h$ is locally closed and define the \emph{Hilbert functor
  ${\sH}^{r,k}_{\sM_{h}} : \mathfrak{Sch}_{\bC} \to \mathfrak{Sets}$ of $k$-embedded
  schemes in $\sM_h$} by
\begin{equation*} {\sH}^{r,k}_{\sM_h}(B) = \Big\{ (f: X\to B, \sL)\in \sM_h(B)
  \bigm\mid \exist \; i:\bP_B(f_*\sL^{[k]}) \overset{\simeq\;}{\longrightarrow}
  \bP^{r-1}\times B \Big\} ,
\end{equation*}
so that, with $\sL^{[k]}$ being invertible (\autoref{rem:invertible}), the natural
map $\psi_{|\sL^{[k]}/B|}: X\to \bP_B(f_*\sL^k)$ fiberwise defines an embedding
$(i\circ\psi)|_{X_b}: X_b\to \bP^{r-1}$.

Next, let $H^{r,k}_h\subseteq \Hilb^{r-1}_h$ be the (universal) locally closed
subscheme corresponding to ${\sH}^{r,k}_{\sM_{h}}$.  Set
$f_{{\mathfrak X}}: {\mathfrak X} \subseteq \bP^{r-1} \times H^{r,k}_h \to H^{r,k}_h$
to denote the universal object of $\sH^{r,k}_{\sM_{h}}$, naturally defined by
restricting the universal family over $\Hilb_h^{r-1}$ to $H^{r,k}_h$, with
$\sL_{{\mathfrak{X}}}$ being its universal line bundle.  Set $\xi$ to denote the
natural map $\psi_{| \sL^{[k]}_{{\mathfrak X}} / H^{r,k}_h |}$ composed with the
isomorphism
\[
\bP_{H^{r,k}_h}\big(  (f_{{\mathfrak X}})_*  \sL^{[k]}_{{\mathfrak X}}  \big) \simeq \bP^{r-1}\times  H^{r,k}_h ;
\]
\[
\xymatrix{
\mathfrak X     \ar[dr]_{f_{\mathfrak X}}  \ar@{^{(}->}[rr]^(.35){\xi}  &&  \bP^{r-1}\times H^{r,k} \ar[dl]^{\pr_2} \ar[r]^{\pr_1} &\bP^{r-1}  \\  
&  H^{r,k}_h
}
\]
As such we have the relative isomorphism 
$\sL_{\mathfrak X}^{[k]}\simeq_{H^{r,k}} \xi^* \pr_1^*\big( \sO_{\bP^{r-1}}(1) \big)$, i.e., for a line bundle $\sB$ on $H^{r,k}_h$ we have 
\begin{equation}\label{eq:globaliso}
\sL_{\mathfrak X}^{[k]}  \simeq  \xi^* \pr_1^* \big( \sO_{\bP^{r-1}} (1) \big)  \otimes f_{\mathfrak X}^* (\sB) .
\end{equation}

Furthermore, let $\mu_0 \in \bN$ be any sufficiently large integer such that, 
for every $\mu\geq \mu_0$, 
\begin{equation}\label{eq:ample}
\det (f_{{\mathfrak X}})_* \xi^* \big( \pr_1^* \sO_{\bP^{r-1}} (\mu)  \big)
\end{equation}
is ample on $H^{r,k}$, 
\cite[Cor.~1.42]{Viehweg95}.

\begin{notation}\label{not:uni}
  To avoid cumbersome notations we will use $\sL$ in place of the universal line
  bundle $\sL_{\mathfrak X}$ on $\mathfrak X$, and $f$ to denote $f_{\mathfrak X}$.
\end{notation}

Note that we have the naturally defined map:
\[
  \big( \pr_2 \big)_* \pr_1^* \big( \sO_{\bP^{r-1}}(1) \big) \longrightarrow
  \underbrace{ \big( \pr_2 \big)_* (\xi)_*}_{(f_{\mathfrak X})_*} \xi^*
  \pr_1^*\big( \sO_{\bP^{r-1}} (1) \big) ,
\]
which together with \autoref{eq:globaliso}, and the isomorphism
$(\pr_2)_* \pr_1^* \sO_{\bP^{r-1}}(1) \simeq \bigoplus^{r} \sO_{H^{r,k}_h}$ leads to
the morphism
\begin{equation}\label{eq:doublestar}
  d\xi:  \bigoplus^{r} \sB  =  (\pr_2)_* \pr_1^* \big( \sO_{\bP^{r-1}}(1) \big)
  \otimes \sB \longrightarrow   f_* \sL^{[k]}  . 
\end{equation}

\begin{remark}[\protect{\cite[pp.~ 44--46]{Viehweg95}}]
\label{rem:iso}
The isomorphism
$\bP_{H^{r,k}_h}\big( f_* \sL^{[k]} \big) \simeq \bP^{r-1} \times H^{r,k}_h$ is
equivalent to $d\xi$ being an isomorphism.  In particular we have:
\begin{equation}\label{isom:B}
\det f _*  \sL^{[k]} \simeq  \sB^{r} .
\end{equation}
\end{remark}

\begin{proposition}\label{prop:isom}
For every integer $\mu\geq \mu_0$ the line bundle 
\begin{equation}\label{eq:standard}
\det  f_*  \sL^{[k\mu]}  \otimes  \Big(\det f_*  \sL^{[k]} \Big)^{-\mu }
\end{equation}
is ample on $H^{r,k}_h$. 
\end{proposition}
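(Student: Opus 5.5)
The plan is to rewrite the line bundle in \autoref{eq:standard} as the ample bundle of \autoref{eq:ample} tensored with a power of $\sB$. I would then show that this power of $\sB$ is weakly positive, using \autoref{thm:WPkey}. Throughout, $\sM_h$ is assumed good, functorially polarized and semipositive, as in \autoref{thm:main2}. Ampleness of a line bundle can be checked on the reduction $(H^{r,k}_h)_{\red}$, and the universal family restricts to it. So I may assume $H^{r,k}_h$ is reduced, which is exactly the setting of \autoref{Crit}.

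\emph{Step 1 (rewriting).} Since $k_0\mid k$, the sheaf $\sL^{[k]}$ is invertible (\autoref{rem:invertible}), so $\sL^{[k\mu]}\simeq(\sL^{[k]})^{\otimes\mu}$. By \autoref{eq:globaliso},
\[
\sL^{[k\mu]}\simeq \xi^*\pr_1^*\sO_{\bP^{r-1}}(\mu)\otimes f^*\sB^{\mu}.
\]
The projection formula gives $f_*\sL^{[k\mu]}\simeq f_*\xi^*\pr_1^*\sO_{\bP^{r-1}}(\mu)\otimes\sB^{\mu}$. Both sides are locally free of rank $r_\mu:=h(k\mu)$ by \autoref{rem:ABC}. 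Taking determinants and using \autoref{isom:B}, I get
\[
\det f_*\sL^{[k\mu]}\otimes\big(\det f_*\sL^{[k]}\big)^{-\mu}\;\simeq\;\det f_*\xi^*\pr_1^*\sO_{\bP^{r-1}}(\mu)\;\otimes\;\sB^{\mu(r_\mu-r)} .
\]
For $\mu\ge\mu_0$ the first factor is ample by \autoref{eq:ample}.

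\emph{Step 2 (sign of the exponent).} I need $r_\mu\ge r$, which I would obtain as follows. On each fibre $Y$, the bundle $L^{[k]}$ is very ample with vanishing higher cohomology. Multiplying by a fixed nonzero section of $L^{[k(\mu-1)]}$ gives an injection $H^0(Y,L^{[k]})\hookrightarrow H^0(Y,L^{[k\mu]})$. Hence $r_\mu=h^0(Y,L^{[k\mu]})\ge r$; enlarging $\mu_0$ if necessary, the inequality is immediate from $\deg h>0$ in any case.

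\emph{Step 3 (weak positivity of $\sB$).} By \autoref{thm:WPkey}, after enlarging $k_0$ if needed, $f_*\sL^{[k]}$ is weakly positive over $H^{r,k}_h$. Determinants of weakly positive locally free sheaves are weakly positive \cite[\S2.3]{Viehweg95}. So $\sB^{r}\simeq\det f_*\sL^{[k]}$ is weakly positive, hence $\sB$ is weakly positive, hence so is $\sB^{\mu(r_\mu-r)}$. Finally, an ample line bundle tensored with a weakly positive line bundle is ample \cite[\S2.3]{Viehweg95}. This follows directly from \autoref{def:WP}: for an ample $\sA$, the bundle $\sB^{N}\otimes\sA^{\beta}$ is globally generated for suitable $N$ and $\beta$, and this absorbs into a power of $\sA$.

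The main obstacle is Step 3. Step 1 is formal, but the stray factor $\sB^{\mu(r_\mu-r)}$ is not controlled by \autoref{eq:ample} alone; it needs the global positivity input \autoref{thm:WPkey} on the possibly non-normal scheme $H^{r,k}_h$. I also need to check that the $k_0$ from \autoref{thm:WPkey} is compatible with the $k$ used to define the embedding, by taking a common multiple.
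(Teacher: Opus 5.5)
Your argument is correct under the hypotheses you add, but it takes a different route from the paper's.

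\emph{The paper's route.} The paper's proof is purely formal. It rewrites the ample bundle of \autoref{eq:ample} via \autoref{eq:globaliso} as $\det(f_*\sL^{[k\mu]}\otimes\sB^{-\mu})$, expands this as $\det f_*\sL^{[k\mu]}\otimes\sB^{-r\mu}$, and substitutes $\sB^r\simeq\det f_*\sL^{[k]}$ from \autoref{isom:B}. No positivity is used anywhere. That expansion, however, treats $f_*\sL^{[k\mu]}$ as having rank $r$, whereas its rank is $r_\mu=h(k\mu)$. Your Step 1 is the same computation done with the correct rank. It exposes the residual factor $\sB^{\mu(r_\mu-r)}$, which the paper's cancellation does not see.

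\emph{What each route buys.} The formal route, carried out with correct ranks, shows that $(\det f_*\sL^{[k\mu]})^{r}\otimes(\det f_*\sL^{[k]})^{-\mu r_\mu}$ is ample. This holds for any bounded, locally closed $\sM_h$, with no positivity input. It is also independent of how $\sL$ is normalized. By contrast, the bundle with exponent $-\mu$ changes by $\sN^{k\mu(r_\mu-r)}$ when $\sL$ is replaced by $\sL\otimes f^*\sN$. So the literal statement genuinely needs some positivity of $\sB$, and your Steps 2--3 supply it. The price is that your hypotheses (good, semipositive, functorially polarized) are not among the standing assumptions of \S\ref{ss:Hilbert}. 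They are in force, however, in the only place the proposition is used. Indeed, in the proof of \autoref{thm:ViehwegStability} the paper invokes weak positivity of $\det f_*\sL^{[k]}$ immediately after citing the proposition.

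\emph{Minor points.}
\begin{enumerate}
\item You can take weak positivity of $f_*\sL^{[k]}$ directly from condition \autoref{item:WP} rather than from \autoref{thm:WPkey}. This avoids having to adjust $k$ to a common multiple.
\item Weak positivity of $\sB$ is immediate, because $d\xi$ identifies $f_*\sL^{[k]}$ with $\bigoplus^r\sB$; you do not need to pass through the determinant.
\item In Step 2 the fibres may be reducible. You therefore need a section of $L^{[k(\mu-1)]}$ that vanishes on no irreducible component, not merely a nonzero one. A general section works, since the bundle is globally generated.
\end{enumerate}
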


\begin{proof}
By \autoref{eq:ample}, the line bundle 
\begin{equation*}
  \det f_*  \xi^* \big( \pr_1^* \sO_{\bP^{r-1}} (\mu)  \big)
  \overset{\autoref{eq:globaliso}}{\simeq}      
  \det \Big(f_* \sL^{[k\mu]} \otimes    \sB^{- \mu} \Big) 
  \simeq   \det f_*   \sL^{[k\mu]}  \otimes \sB^{-r\mu}
\end{equation*}
is ample on $H^{r,k}_h$. The rest now follows from \autoref{rem:iso}.
\end{proof}

\begin{terminology}
Being the most naturally defined ample line bundles on the Hilbert scheme, 
the line bundles \autoref{eq:standard} are referred to as standard linearizations
in GIT. 
\end{terminology}

\subsection{Group actions on Hilbert schemes}
\label{ss:Group}
Given $H^{r,k}_h\subseteq \Hilb^{r-1}_h$ and $\bP^{r-1}$ as in \S \ref{ss:Hilbert}, we know that 
changing coordinates in $\bP^{r-1}$ amounts to an automorphism of $H^{r,k}_h$, defining 
a group action $\sigma: G\times H^{r,k}_h \to H^{r,k}_h$, $G:= \Sl(r, \bC)$.

\begin{definition}[\protect{\cite[1.17]{Mum77}, \cite[Def.~8]{KolSurvey21}}]
\label{def:asym}
Let $\sM_h$ and $k_0$ be as in \S\ref{s:GIT}. For every integer $k>0$ divisible by $k_0$, 
let $\sB_k$ be an ample line bundle on $H^{k,r}$. 
We say $(Y,L)\in \Ob(\sM_h)$ is \emph{asymptotically (semi)stable} with respect to $\sB_k$, if the
associated closed point in $H^{r,k}_h$ is GIT-(semi)stable with respect to $\sB_k$. 
\end{definition}

\begin{assumption}\label{ass:proper}
In the rest of this subsection we will assume that the $G$-action on $H:= H^{r,k}_h$ is proper with finite stabilizers, i.e., the morphism 
$(\sigma, \pr_2): G\times H \to H\times H$ is proper (see for example \cite[Def.~3.1]{Viehweg95}). 
\end{assumption}

According to \cite[Lem. 4.15]{Viehweg95}, given a compactification $\overline H$ of $H$,
there is a quasi-projective scheme scheme $Z$ containing $G \times H$ as an open subscheme and proper 
morphisms 
\[
  \xymatrix{ Z \ar[r]^-{\phi} \ar[d]^{\rm p_2} & \overline H && \text{extending} && G
    \times H \ar[r]^-{\sigma} \ar[d]^{\pr_2} &  H    \\
    \overline H &&&&& H }
\]
such that 
\begin{enumerate}
\item $Z = U\cup V$, where $U := \phi^{-1}(H)$, $V:={\rm p}_2^{-1}(H)$, and 
\item $U\cap V= G\times H$.
\end{enumerate}

We will work in the setting of \S\ref{ss:Hilbert}. Denoting the pullback of $\mathfrak X$ through $\sigma$ and $\pr_2$ 
by respectively $\mathfrak X'$ and $\mathfrak X''$, the group action defined by $\sigma$ naturally corresponds to 
an isomorphism $\xi_{\mathfrak X}: \mathfrak X'\to \mathfrak X''$ over $G\times H$ \cite[p.~199]{Viehweg95}:
\[
\xymatrix{
\mathfrak X \ar[d]^{f}  &  \ar[l]_q \mathfrak X'' \ar[dr]^{f'}&& \ar[ll]_{\xi_{\mathfrak X}}  \mathfrak X' \ar[dl]_{f''}  \ar[r]^{\sigma'}  & \mathfrak X \ar[d]^{f}  \\
H                         && \ar[ll]_{\pr_2} G\times H  \ar[rr]^{\sigma}  &&  H 
}
\]

Setting $\sL':= (\sigma')^* \sL$, $\sL'':= q^*\sL$, and assuming that $\eta>0$ is divisible by $k_0$ as defined in in \S\ref{ss:Hilbert}, we have 
\[
\sigma^*f_*\sL^{[\eta]} \simeq f'_*\big( (\sL')^{[\eta]}\big)  \;\; \text{and} \;\;  \pr_2^* f_*\sL^{[\eta]} \simeq f''_*\big((\sL'')^{[\eta]} \big). 
\]
\begin{definition}\label{linBC}
In the setting of \S \ref{ss:Hilbert} and \autoref{not:uni}
we say that $f_*\sL^{[\eta]}$ is 
\emph{$G$-linearizable by base change}, 
if $\xi_{\mathfrak X}$ is an isomorphism of polarized families, i.e. $\sL'\simeq \sL''$, inducing the isomorphism
$f'_*\big((\sL')^{[\eta]}\big) \simeq f''_*\big((\sL'')^{[\eta]}\big)$ and the resulting $\Phi: \sigma^* f_*\sL^{[\eta]} \overset{\simeq}{\longrightarrow} \pr_2^* f_*\sL^{[\eta]}$.
If there is a sufficiently large $\eta_0\in \bN$ for which this property holds for every $\eta>0$ divisible by $\eta_0$, then we refer to $\sM_h$ as \emph{universally linearizable by base change}. 
\end{definition}
\begin{remark}
We can similarly define the notion of linearizability by base change for $\big(f_*\sL^{[\eta]}\big)^{\otimes a} \otimes \big(\det f_*\sL^{[\eta']}\big)^b$, for any $a,b \geq 0$
and $\eta'\in \bN$ divisible by $k_0$.
Clearly if $\sM_h$ is linearizable by base change, then so is the latter sheaf. 
\end{remark}
\begin{proposition}\label{prop:LBC}
Every bounded, constructible and functorially polarized $\sM$
is universally linearizable by base change. 
\end{proposition}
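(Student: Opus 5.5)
The plan is to obtain the isomorphism of polarized families directly from the universal property of the Hilbert functor together with functorial polarization. Over $G\times H$, with $H:=H^{r,k}_h$, we have two families: $\mathfrak X'=\sigma^*\mathfrak X$ and $\mathfrak X''=\pr_2^*\mathfrak X$. They are related by the isomorphism $\xi_{\mathfrak X}\colon \mathfrak X'\to\mathfrak X''$ over $G\times H$, which is induced by the coordinate change on $\bP^{r-1}\times G\times H$ given by the tautological $g\in G$. Both families, together with the pullbacks $\sL'$ and $\sL''$ of the universal polarization, lie in $\sM_h(G\times H)$, since $\sM$ is a moduli functor and hence closed under base change. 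Since $\xi_{\mathfrak X}$ is a $G\times H$-isomorphism of the underlying families, the two pairs $(\mathfrak X',\sL')$ and $(\mathfrak X'',\xi_{\mathfrak X}^{*}\text{-transported }\sL'')$ have isomorphic underlying families. Functorial polarization (\autoref{def:functoial}) is exactly the statement that the polarization is determined by the family up to isomorphism with trivial twist $\sB\simeq\sO$. I will therefore argue that $\xi_{\mathfrak X}^*\sL''\simeq \sL'$ on the nose, with no line bundle from $G\times H$ needed.

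To justify this, I would use \autoref{equivalence} with $\sB\simeq\sO$. Concretely, by functorial polarization, the polarization $\sL$ of an object in $\sM(V)$ is canonically attached to $f\colon U\to V$; in the model case $\sM^{\KSBm}$ it is $\omega_{U/V}$, and $\xi_{\mathfrak X}^*\omega_{\mathfrak X''/G\times H}\simeq\omega_{\mathfrak X'/G\times H}$. Formally, I would apply the defining property to the two elements $(\mathfrak X'\to G\times H,\sL')$ and $(\mathfrak X''\to G\times H,\sL'')$ of $\sM(G\times H)$, which define the same point of the functor because $\xi_{\mathfrak X}$ identifies them. This yields $\sL'\simeq\xi_{\mathfrak X}^*\sL''$.

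With this in hand, I take $\eta_0:=k_0$ from \autoref{def:bounded}. Then $\sL^{[\eta]}$ is invertible (\autoref{rem:invertible}), reflexive powers commute with the isomorphism $\xi_{\mathfrak X}$, and $\xi_{\mathfrak X}^*(\sL'')^{[\eta]}\simeq(\sL')^{[\eta]}$. Pushing forward gives $f'_*(\sL')^{[\eta]}\simeq f''_*(\sL'')^{[\eta]}$. Combining this with the base change isomorphisms $\sigma^*f_*\sL^{[\eta]}\simeq f'_*(\sL')^{[\eta]}$ and $\pr_2^*f_*\sL^{[\eta]}\simeq f''_*(\sL'')^{[\eta]}$ (\autoref{rem:ABC}, using boundedness and local constancy of $h^0$) yields $\Phi$ for every $\eta$ divisible by $\eta_0$. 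Constructibility is used only to ensure that $H^{r,k}_h$ exists as a locally closed (or constructible) subscheme of $\Hilb^{r-1}_h$ carrying the universal family, so that $\mathfrak X'$ and $\mathfrak X''$ make sense.

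The main obstacle is the first step: making precise that functorial polarization forces $\xi_{\mathfrak X}^*\sL''\simeq\sL'$ rather than an isomorphism up to a twist $f'^*\sB$ from $G\times H$. If only the weaker statement were available, I would try to kill $\sB$ by observing that $\Pic(G\times H)\simeq\Pic(H)$, since $G=\Sl(r)$ is rational and simply connected with trivial Picard group and characters. Restricting to $\{e\}\times H$, where $\xi_{\mathfrak X}$ is the identity, would then show that $\sB$ is trivial.
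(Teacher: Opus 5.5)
Your proposal is correct and is essentially the paper's argument: functorial polarization gives $\sL'\simeq\xi_{\mathfrak X}^*\sL''$, and boundedness together with \autoref{rem:ABC} gives base change for $f_*\sL^{[\eta]}$ with $\eta$ divisible by $k_0$, which produces $\Phi$. Your fallback via $\mathrm{Pic}(G\times H)\simeq\mathrm{Pic}(H)$ is not needed, and it would fail for arbitrary reduced $H$ because homotopy invariance of $\mathrm{Pic}$ requires seminormality, so it is best dropped.
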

\begin{proof}
The isomorphism $\sL'\simeq \sL''$ in \autoref{linBC} follows from the functorial choice of polarization. 
Moreover by  \autoref{rem:ABC} 
we know, for $k_0>0$ as in \autoref{def:bounded} and every $k$ divisible by $k_0$, $f_*\sL^{[k]}$ 
commutes with base change. 
\end{proof}

\begin{example}\label{ex:KSB}
By \autoref{rem:KSB} and \autoref{prop:LBC}
 $\sM^{\KSBm}_h$ is universally linearizable by base change. 
\end{example}

\subsection{A stability criterion for linearizations over Hilbert schemes}
\begin{notation}\label{not:A}
For any polarized family $f:(X, \sL)\to B$, and any $m\in \bZ$, we define $\sA_m(f, \sL):= \det f_*\sL^{[m]}$. 
\end{notation}

Working in the setting of \S \ref{ss:Hilbert}, \autoref{not:A}, \autoref{not:uni}, and \S \ref{ss:Group}, we consider the universal family $f:(\mathfrak X, \sL)\to H$
of $\sH^{r,k}_{\sM_h}$. 

\begin{theorem}\label{thm:ViehwegStability}
Assume that $\sM_h$ is bounded, separated, locally closed, and that $|\Aut(Y,L_Y)|<\infty$, for every $(Y,L_Y)\in \Ob(\sM_h)$.
Then, for every $\mu \geq \mu_0$, the line bundle 
\begin{equation}\label{eq:lin}
\sA_{k}(f, \sL) \otimes \sA^r_{k\mu} (f, \sL) 
\end{equation}
defines a $G$-linearized ample line bundle that is GIT-stable over the entire $H$, if the 
following two conditions hold.
\begin{enumerate}
\item \label{item:linBC} The functor $\sM_h$ is functorially polarized. 
\item \label{item:WP} There is a $k_0>0$, such that for any reduced quasi-projective
  scheme $V$ and every $\big( h:(U, \sN) \to V \big) \in \sM_h(V)$, $h_*\sN^{[k]}$ is
  weakly positive over $V$, for every $k>0$ divisible by $k_0$.
\end{enumerate}
\end{theorem}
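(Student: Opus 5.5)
The plan is to follow Viehweg's stability criterion \cite[Thm.~4.25]{Viehweg95}, which we use in the following form: if $\sA$ is a $G$-linearized line bundle on $H$ and $\sA' $ is ample, and the sheaves entering $\sA$ are weakly positive and $G$-linearized compatibly over the compactification $Z$ of \cite[Lem.~4.15]{Viehweg95}, then $H=H(\sA\otimes\sA')^s$. We work in the setting of \S\ref{ss:Hilbert} and \S\ref{ss:Group}.

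\emph{Step 1: rewrite \autoref{eq:lin} and obtain the linearization.} By \autoref{isom:B}, $\sA_k=\det f_*\sL^{[k]}\simeq\sB^r$. Write
\[
\sA_k\otimes\sA_{k\mu}^r \simeq \Big(\sA_{k\mu}\otimes\sA_k^{-\mu}\Big)^{r}\otimes \sA_k^{\,r\mu+1}.
\]
By \autoref{prop:isom}, the first factor is ample for $\mu\ge\mu_0$. Condition \autoref{item:WP} applied to $H$ (reduced, after passing to $H_{\red}$) shows that $f_*\sL^{[k]}$ is weakly positive, so its determinant $\sA_k$ is weakly positive over $H$ \cite[2.16]{Viehweg95}. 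An ample line bundle tensored with a weakly positive one is ample, so \autoref{eq:lin} is ample. By \autoref{item:linBC} and \autoref{prop:LBC}, $f_*\sL^{[k]}$ and $f_*\sL^{[k\mu]}$ are $G$-linearizable by base change. Hence their determinants, and so \autoref{eq:lin}, carry $G$-linearizations. The first ample factor $\sA_{k\mu}\otimes\sA_k^{-\mu}$ is itself $G$-linearized, being the standard linearization.

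\emph{Step 2: properness.} Boundedness, separatedness and finiteness of $\Aut(Y,L_Y)$ give \autoref{ass:proper}, namely that the $G$-action on $H$ is proper with finite stabilizers, by \cite[Lem.~4.?/§7]{Viehweg95} and separatedness of $\sM_h$. We then form $Z=U\cup V$ as in \cite[Lem.~4.15]{Viehweg95}.

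\emph{Step 3: stability, the main obstacle.} Viehweg's argument needs the weak positivity to hold for pulled-back families over $G$-orbit closures, or over $Z$ and its reductions. It does not suffice on $H$ alone. Concretely, we must show that for every point $x\in H$ and the orbit map $G\to H$, there is a $G$-invariant section of a power of \autoref{eq:lin} that is nonvanishing at $x$ and has affine, closed-orbit locus. Following \cite[Thm.~4.25]{Viehweg95}, we reduce this to the weak positivity of $\phi^*$ and $\mathrm{p}_2^*$ of $f_*\sL^{[k]}$ over $U$ and $V$, extended to $Z$ using the linearization isomorphism $\Phi$ of \autoref{linBC}.

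The point is that the families $\mathfrak X'$ and $\mathfrak X''$ belong to $\sM_h(U)$ and $\sM_h(V)$ respectively. Since $U$ and $V$ are reduced quasi-projective schemes, hypothesis \autoref{item:WP} applies to them directly. This is exactly why the criterion is stated for arbitrary reduced quasi-projective bases rather than for normal ones. The isomorphism $\Phi$ over $U\cap V=G\times H$ glues these two families, and the pieces of \autoref{eq:lin} then satisfy the hypotheses of Viehweg's criterion.

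The step I expect to be delicate is verifying that the gluing is compatible on the determinant level with the exponents $1$ and $r$ appearing in \autoref{eq:lin}. Viehweg's theorem balances a weakly positive $\det f_*\sL^{[k\mu]}$ against $\sB^{r\mu}$, which amounts to bookkeeping via \autoref{isom:B}. Once that is checked, his theorem yields that every point of $H$ is GIT-stable.
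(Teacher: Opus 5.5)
Your overall route is the paper's: ampleness from the standard linearization plus weak positivity of $\sA_k$, $G$-linearization by base change from functorial polarization, properness of the action via \cite[Lem.~7.6]{Viehweg95}, and then \cite[Thm.~4.25]{Viehweg95} fed by data over $Z=U\cup V$. But the decisive step is not carried out, and as written it would not supply the hypothesis of Viehweg's criterion.

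The first problem is that you apply \autoref{item:WP} to $U$ and $V$ separately and then glue. Weak positivity is not local on the base. The criterion needs a single locally free sheaf $\sF$ on $Z$ that is weakly positive over $Z$, i.e.\ $\sym^{\alpha\beta}\sF\otimes\mathcal H^{\beta}$ is generated by sections over all of $Z$. It also needs isomorphisms $\gamma_U$ from $(\phi|_U)^*$ and $\gamma_V$ from $(p_2|_V)^*$ of the relevant sheaf whose composite on $U\cap V=G\times H$ is the base-change linearization. Weak positivity of the two restrictions does not give this. The paper instead glues the families themselves. It takes $(\mathfrak X,\sL)_U$ pulled back by $\phi|_U$ and $(\mathfrak X,\sL)_V$ pulled back by $p_2|_V$; these are not $\mathfrak X'$ and $\mathfrak X''$, which live over $G\times H$. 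They are identified over $U\cap V$ by $\xi_{\mathfrak X}$, and functorial polarization gives $\sL'\simeq\sL''$ with no twist from the base. This yields one family $g\colon(Y,\sL_Y)\to Z$ in $\sM_h(Z)$, and \autoref{item:WP} is applied once, to $Z_{\red}$. That is exactly where arbitrary reduced quasi-projective bases are needed.

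The second problem is that you never specify which ample linearized line bundle and which sheaf on $Z$ go into Thm.~4.25; you defer this as ``bookkeeping''. The balancing you anticipate, a weakly positive $\det f_*\sL^{[k\mu]}$ against $\sB^{r\mu}$, is not what the paper does; there the standard linearization enters only through ampleness. The missing observation is that $d\xi$ gives $f_*\sL^{[k]}\simeq\bigoplus^r\sB$ and $\sA_k\simeq\sB^r$, so \autoref{eq:lin} is $(\sB\otimes\sA_{k\mu})^r$. Your Step~1 argument shows $\sB\otimes\sA_{k\mu}$ is ample, since $\sB$ is a direct summand of the weakly positive $f_*\sL^{[k]}$, and it is $G$-linearized. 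Moreover $f_*\sL^{[k]}\otimes\sA_{k\mu}\simeq\bigoplus^r(\sB\otimes\sA_{k\mu})$ extends compatibly to $\sF=g_*\sL_Y^{[k]}\otimes\sA_{k\mu}(g,\sL_Y)$ on $Z$. This $\sF$ is weakly positive over $Z_{\red}$, being a weakly positive sheaf tensored with the determinant of one. Thm.~4.25 applied to $\sB\otimes\sA_{k\mu}$ with this $\sF$ gives stability of every point, hence stability for its $r$-th power. Without this identification your Step~3 remains a plan rather than a verification. Also, your passage to $H_{\red}$ needs a justification such as \cite[Prop.~3.36]{Viehweg95}.
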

\begin{proof}
  We start by the observation that \autoref{ass:proper} is satisfied by the
  finiteness assumption on the polarized automorphism groups and separatedness of
  $\sM_h$ \cite[Lem.~7.6]{Viehweg95}.  By \cite[Prop.~3.36]{Viehweg95} we may also
  assume that $H$ is reduced.

By \autoref{eq:doublestar} we know that $f_*\sL^{[k]}\simeq \bigoplus^r \sB$. 
Therefore $\sB$ is weakly positive over $H$. 
On the other hand, by \autoref{prop:isom} we know that 
\[
\sA_{k\mu}(f,\sL) \otimes \sA_k^{-\mu} (f,\sL)
\]
is ample over $H$ and thus, using the assumption that $\sA_k(f, \sL)$
is weakly positive over $H$, so is $\sA_{k\mu}(f,\sL)$.
Consequently $\sB\otimes \sA_{k\mu}(f, \sL)$ is ample. 

Next, consider the pullback $(\mathfrak X_U,\sL_U):= (\mathfrak X, \sL)_U$ 
and $(\mathfrak X_V, \sL_V) : = (\mathfrak X, \sL)_V$ 
via $\phi|_U: U\to H$ and $p_2|_V: V\to H$, respectively. 
Using the isomorphism $\xi_{\mathfrak X}$ over $U\cap V$ in 
\S \ref{ss:Group} and the assumption on the functoriality of the polarization $\sL$
we find that 
\[
\big(  \mathfrak X_U, \sL_U\big)_{U\cap V}  \simeq  \big(\mathfrak X_{V} , \sL_V  \big)_{U\cap V}.
\]
Therefore, using the fact that $Z= U\cup V$, after gluing together, we find the polarized family $\big(  g:(Y, \sL_Y)\to Z \big) \in \sM(Z)$.

\begin{claim}\label{claim:linBC}
The two locally free sheaves 
\[
\;\;\;\;\;\; f_*\sL^{[k]}   \;\;\; \text{and} \;\;\; f_*\sL^{[k]}\otimes \sA_{k\mu}(f,\sL) \simeq \bigoplus^r \big(  \sB\otimes \sA_{k\mu}(f,\sL) \big)
\]
are linearizable by base change. In particular $\sA_{k\mu}(f, \sL)$ and $\sB$ are $G$-linearized. 
\end{claim}
\noindent
\emph{Proof of \autoref{claim:linBC}.}
This directly follows from the assumption on the chosen linearization
and \autoref{prop:LBC}. \qed

\begin{claim}\label{claim:final}
With $\sF:= g_*\sL^{[k]}_Y \otimes \sA_{k\mu}(g, \sL_Y)$ there are two isomorphisms 
\[
\xymatrix{
\gamma_U :  (\phi|_U)^*  \Big( f_*\sL^{[k]} \otimes \sA_{k\mu}(f, \sL)    \Big)   \overset{\simeq}{\longrightarrow}  \sF\big|_U
}
\]
\[
\xymatrix{
\gamma_V :  (p_2|_V)^*  \Big( f_*\sL^{[k]} \otimes \sA_{k\mu}(f, \sL)     \Big)  \overset{\simeq}{\longrightarrow} \sF \big|_V  
}
\]
such that after restriction to $U\cap V$ the linearization of $f_*\sL^{[k]} \otimes \sA_{k\mu}(f, \sL)$ induced by 
\[
\big(\gamma_V\big|_{U\cap V} \big)^{-1} \circ \gamma_U\big|_{U\cap V}
\]
coincides with the one defined by base change in \autoref{claim:linBC}.
\end{claim}
\noindent
\emph{Proof of \autoref{claim:final}.}
Using the isomorphism $(Y, \sL_Y)_U \simeq (\mathfrak X , \sL)_U$ (defined by pulling back through $\phi|_U: U\to H$) and 
base change we can define $\gamma_U$ by:
\[
(\phi|_U)^*  \Big( f_*\sL^{[k]} \otimes \sA_{k\mu}(f, \sL)    \Big)  \overset{\gamma_U}{\simeq}  \big( g|_U \big)_* \sL^{[k]}_Y \otimes \sA_{k\mu}(g, \sL_Y)\big|_U = \sF\big|_U
\]
The isomorphism $\gamma_V$ can be similarly defined. The rest now follows from the construction. \qed

Now, by applying \cite[Thm.~4.25]{Viehweg95} to $\sB\otimes \sA_{k\mu}(f,\sL)$
and using the fact that $\sF$ is weakly positive over $Z_{\red}$ (Item~\ref{item:WP}), 
we find that the linearized line bundle $\sB\otimes \sA_{k\mu}$ (\autoref{claim:linBC}) is stable 
over the entire $H$ and thus so is 
\[
\big(  \sB\otimes \sA_{k\mu}(f,\sL)\big)^r \overset{\autoref{isom:B}}{\simeq}  \sA_k(f,\sL)  \otimes \sA^r_{k\mu} (f, \sL) .
\]
\end{proof}

\subsection{Proof of \autoref{thm:main2}}
This follows directly from \autoref{thm:WPkey}
combined wtih \autoref{thm:ViehwegStability}.

\subsection{Proof of \autoref{thm:main1}}\label{proofmain1}
With $I=\{ h_i \in \bQ[x] \}$ as in \autoref{rem:KSB}, it suffices to prove the theorem for each $\sM^{\KSBm}_{h_i}$, 
which is bounded by \cite{HMX18} and separated by uniqueness of canonical models.
Now, according to \cite[Thm.~1.2]{Fuj18} ${\sM}_{h_i}^{\KSBm}$ is semipositive. 
On the other hand, by \cite{HX13} ${\mathcal M}^{\KSBm}_{h_i}$ is proper. Therefore, 
by \autoref{thm:WPkey} (more specifically \autoref{cor:WP}) Item \autoref{item:WP} is satisfied. 
With the canonical choice of polarization, Item~\autoref{item:linBC} also holds (\autoref{ex:KSB}). 
The rest now follows from \autoref{thm:ViehwegStability}.


\begin{bibdiv}
\begin{biblist}





\bibitem[AV02]{AV02}
{\sc D.~Abramovich and A.~Vistoli}: \emph{Compactifying the space of stable
  maps}, J. Amer. Math. Soc. \textbf{15} (2002), 27--75,
  \href{https://doi.org/10.1090/S0894-0347-01-00380-0}{DOI:10.1090/S0894-0347-01-00380-0}.

\bibitem[Alp13]{MR3237451}
{\sc J.~Alper}: \emph{Good moduli spaces for {A}rtin stacks}, Ann. Inst.
  Fourier (Grenoble) \textbf{63} (2013), 6, 2349--2402.

\bibitem[BCHM10]{BCHM10}
{\sc C.~Birkar, P.~Cascini, C.~D. Hacon, and J.~McKernan}: \emph{Existence of
  minimal models for varieties of log general type}, Journal of the AMS
  \textbf{23} (2010), 405--468,
  \href{http://dx.doi.org/10.1090/S0894-0347-09-00649-3}{DOI:10.1090/S0894-0347-09-00649-3}.

\bibitem[Cam04]{Cam04}
{\sc F.~Campana}: \emph{Orbifolds, special varieties and classification
  theory}, Ann. Inst. Fourier (Grenoble) \textbf{54} (2004), 3, 499--630.

\bibitem[Con00]{Conrad00}
{\sc B.~Conrad}: \emph{Grothendieck duality and base change}, Lecture Notes in
  Mathematics, vol. 1750, Springer-Verlag, Berlin, 2000.

\bibitem[Don01]{Don01}
{\sc S.~K. Donaldson}: \emph{Scalar curvature and projective embeddings}, J.
  Differential Geom. \textbf{59} (2001), 3, 479--522.

\bibitem[Fuj18]{Fuj18}
{\sc O.~Fujino}: \emph{Semipositivity theorems for moduli problems}, Ann. Math.
  \textbf{187} (2018), 3, 639--665,
  \href{https://doi.org/10.4007/annals.2018.187.3.1}{DOI:10.4007/annals.2018.187.3.1}.

\bibitem[Gie82]{Gie82}
{\sc D.~Gieseker}: \emph{Lecture on moduli of curves}, Tata Institute of
  Fundamental Research Lectures on Mathematics and Physics, vol.~69, Published
  for the Tata Institute of Fundamental Research, Bombay, 1982.

\bibitem[HMX12]{HMX13}
{\sc C.~D. Hacon, J.~McKernan, and C.~Xu}: \emph{{ACC} for log canonical
  thresholds}, Preprint \href{http://arxiv.org/abs/1208.4150}{arXiv:1208.4150}.
  To appear in Annals of Mathematics., August 2012.

\bibitem[HMX18]{HMX18}
{\sc C.~D. Hacon, J.~McKernan, and C.~Xu}: \emph{Boundedness of moduli of
  varieties of general type}, J. Eur. Math. Soc. (JEMS) \textbf{20} (2018), 4,
  523--571, \href{https://arxiv.org/abs/1412.1186}{arxiv.org/abs/1412.1186}.

\bibitem[HX13]{HX13}
{\sc C.~D. Hacon and C.~Xu}: \emph{Existence of log canonical closures},
  Invent. Math. \textbf{192} (2013), 1, 161--195.

\bibitem[Har66]{Ha66}
{\sc R.~Hartshorne}: \emph{Ample vector bundles}, Publ. Math. Inst. Hautes
  {\'E}tudes Sci. \textbf{29} (1966), 63--94.

\bibitem[Har77]{Ha77}
{\sc R.~Hartshorne}: \emph{Algebraic geometry}, Springer-Verlag, New York,
  1977, Graduate Texts in Mathematics, No. 52.

\bibitem[Kaw81]{Kaw81}
{\sc Y.~Kawamata}: \emph{{Characterization of Abelian Varieties}}, Compositio
  Math. \textbf{43} (1981), 253--276.

\bibitem[KM97]{KM97}
{\sc S.~Keel and S.~Mori}: \emph{Quotients by groupoids}, Annals Math.
  \textbf{145} (1997), 1, 193--213,
  \href{https://doi.org/10.2307/2951828}{DOI:10.2307/2951828}.

\bibitem[Kol87]{Kollar87}
{\sc J.~Koll{\'a}r}: \emph{Subadditivity of the {K}odaira dimension: f{i}bers
  of general type}, Algebraic geometry, {S}endai, 1985, Adv. Stud. Pure Math.,
  vol.~10, North-Holland, Amsterdam, 1987, pp.~361--398.

\bibitem[Kol90]{Kollar90}
{\sc J.~Koll{\'a}r}: \emph{Projectivity of complete moduli}, J. Differential
  Geom. \textbf{32} (1990), 1, 235--268.

\bibitem[Kol97]{Kollar97a}
{\sc J.~Koll{\'a}r}: \emph{Quotient spaces modulo algebraic groups}, Ann. of
  Math. (2) \textbf{145} (1997), 1, 33--79.

\bibitem[Kol13]{KollarSingsOfTheMMP}
{\sc J.~Koll{\'a}r}: \emph{Singularities of the minimal model program},
  Cambridge Tracts in Mathematics, vol. 200, Cambridge University Press,
  Cambridge, 2013, With a collaboration of S{\'a}ndor Kov{\'a}cs.

\bibitem[Kol21]{KolSurvey21}
{\sc J.~Koll\'ar}: \emph{Mumford’s influence on the moduli theory of
  algebraic varieties}, Pure Appl. Math. Q. \textbf{17} (2021), 2, 619--647,
  \href{https://doi.org/10.4310/PAMQ.2021.v17.n2.a3}{DOI:10.4310/PAMQ.2021.v17.n2.a3}.

\bibitem[Kol23]{ModBook}
{\sc J.~Koll\'{a}r}: \emph{Families of varieties of general type}, Cambridge
  Tracts in Mathematics, vol. 231, Cambridge University Press, Cambridge, 2023,
  With the collaboration of Klaus Altmann and S{\'a}ndor Kov{\'a}cs.

\bibitem[KSB88]{Ksb88}
{\sc J.~Koll{\'a}r and N.~I. Shepherd-Barron}: \emph{Threefolds and
  deformations of surface singularities}, Invent. Math. \textbf{91} (1988), 2,
  299--338.

\bibitem[Kov13]{Kov13}
{\sc S.~Kov{\'a}cs}: \emph{Singularities of stable varieties}, Handbook of
  moduli. Adv. Lect. Math. (ALM), 25, vol.~II, Int. Press, Somerville, MA,
  2013.

\bibitem[KP17]{KP17}
{\sc S.~J. Kov\'acs and {\relax Zs}.~Patakfalvi}: \emph{Projectivity of the
  moduli space of stable log-varieties and subadditivity of log-{K}odaira
  dimension}, J. Amer. Math. Soc. \textbf{30} (2017), 4, 959--1021.

\bibitem[LMB00]{LMB00}
{\sc G.~Laumon and L.~Moret-Bailly}: \emph{Champs alg{\'e}briques}, vol.~39,
  Springer-Verlag, New York, 2000, Ergeb. Math. Grenzgeb.

\bibitem[Laz04]{Laz04-I}
{\sc R.~Lazarsfeld}: \emph{Positivity in algebraic geometry. {I}}, Ergebnisse
  der Mathematik und ihrer Grenzgebiete. 3. Folge. A Series of Modern Surveys
  in Mathematics [Results in Mathematics and Related Areas. 3rd Series. A
  Series of Modern Surveys in Mathematics], vol.~48, Springer-Verlag, Berlin,
  2004, Classical setting: line bundles and linear series.

\bibitem[Luo98]{Luo98}
{\sc H.~Luo}: \emph{Geometric criterion for {G}ieseker-{M}umford stability of
  polarized manifolds}, J. Differential Geom. \textbf{49} (1998), 3, 577--599.

\bibitem[Mum77]{Mum77}
{\sc D.~Mumford}: \emph{Stability of projective varieties}, Enseignement Math.
  \textbf{1-2} (1977), 23, 39--110.

\bibitem[MFK94]{MR1304906}
{\sc D.~Mumford, J.~Fogarty, and F.~Kirwan}: \emph{Geometric invariant theory},
  third ed., Ergebnisse der Mathematik und ihrer Grenzgebiete (2) [Results in
  Mathematics and Related Areas (2)], vol.~34, Springer-Verlag, Berlin, 1994.

\bibitem[Ols16]{Olsson16}
{\sc M.~Olsson}: \emph{Quasi-projective moduli for polarized manifolds},
  Colloquium Publications, vol.~62, American Mathematical Society, 2016.

\bibitem[Ryd13]{Ryd13}
{\sc D.~Rydh}: \emph{Existence and properties of geometric quotients}, J.
  Algebraic Geom. \textbf{22} (2013), 629--669,
  \href{https://doi.org/10.1090/S1056-3911-2013-00615-3}{DOI:S1056-3911-2013-00615-3}.

\bibitem[StacksProject]{stacks-project}
{\sc {Stacks Project Authors}}: \emph{Stacks {P}roject},
  \url{http://stacks.math.columbia.edu}.

\bibitem[Vie95]{Viehweg95}
{\sc E.~Viehweg}: \emph{Quasi-projective moduli for polarized manifolds},
  Ergebnisse der Mathematik und ihrer Grenzgebiete (3), vol.~30,
  Springer-Verlag, Berlin, 1995.

\bibitem[Vie83a]{Viehweg83}
{\sc E.~Viehweg}: \emph{Weak positivity and the additivity of the {K}odaira
  dimension for certain fibre spaces}, Algebraic varieties and analytic
  varieties (Tokyo, 1981), Adv. Stud. Pure Math., vol.~1, North-Holland,
  Amsterdam, 1983, pp.~329--353.

\bibitem[Vie83b]{Vie83}
{\sc E.~Viehweg}: \emph{Weak positivity and the additivity of the {K}odaira
  dimension. {II}. {T}he local {T}orelli map}, Classification of algebraic and
  analytic manifolds (Katata, 1982), Progr. Math., vol.~39, Birkh\"auser
  Boston, Boston, MA, 1983, pp.~567--589.

\bibitem[Vie89]{MR996558}
{\sc E.~Viehweg}: \emph{Weak positivity and the stability of certain {H}ilbert
  points}, Invent. Math. \textbf{96} (1989), 3, 639--667.

\bibitem[Vie10]{Viehweg06}
{\sc E.~Viehweg}: \emph{Compactifications of smooth families and of moduli
  spaces of polarized manifolds},
  \href{https://doi.org/10.4007/annals.2010.172.809}{DOI:10.4007/annals.2010.172.809},
  2010.

\bibitem[Wan04]{Wan04}
{\sc X.~Wang}: \emph{Futaki invariant and stability of projective manifolds},
  Comm. Anal. Geom. \textbf{12} (2004), 5, 1009--1037.

\bibitem[WX14]{WX14}
{\sc X.~Wang and C.~Xu}: \emph{Nonexistence of asymptotic {GIT}
  compactification}, Duke. Math. \textbf{163} (2014), 12, 2217--2241.

\bibitem[Yau77]{MR0451180}
{\sc S.-T. Yau}: \emph{Calabi's conjecture and some new results in algebraic
  geometry}, Proc. Nat. Acad. Sci. U.S.A. \textbf{74} (1977), 5, 1798--1799.



\end{biblist}
\end{bibdiv}

\end{document}
